\def\inte#1{
\displaystyle\mathop{#1\kern0pt}^\circ }
\let\pa=\partial
\let\al=\alpha
\let\b=\beta
\let\d=\delta
\let\e=\varepsilon
\let\lam=\lambda
\let\r=\rho
\let\f=\frac
\let\p=\psi
\let\D=\Delta
\let\wt=\widetilde
\let\wh=\widehat
\def\cA{{\mathcal A}}
\def\cB{{\mathcal B}}
\def\cC{{\mathcal C}}
\def\cF{{\mathcal F}}
\def\cR{{\mathcal R}}
\def\cS{{\mathcal S}}
\def\pa{\partial}
\def\grad{\nabla}
\def\dH{\dot{H}}
\def\virgp{\raise 2pt\hbox{,}}
\def\cdotpv{\raise 2pt\hbox{;}}
\def\eqdefa{\buildrel\hbox{\footnotesize def}\over =}
\def\C{\mathop{\mathbb C\kern 0pt}\nolimits}
\def\DD{\mathop{\mathbb D\kern 0pt}\nolimits}
\def\EE{\mathop{{\mathbb E \kern 0pt}}\nolimits}
\def\K{\mathop{\mathbb K\kern 0pt}\nolimits}
\def\N{\mathop{\mathbb N\kern 0pt}\nolimits}
\def\Q{\mathop{\mathbb Q\kern 0pt}\nolimits}
\def\R{\mathop{\mathbb R\kern 0pt}\nolimits}
\def\SS{\mathop{\mathbb S\kern 0pt}\nolimits}
\def\ZZ{\mathop{\mathbb Z\kern 0pt}\nolimits}
\def\TT{\mathop{\mathbb T\kern 0pt}\nolimits}
\def\P{\mathop{\mathbb P\kern 0pt}\nolimits}
\newcommand{\la}{\lambda}
\newcommand{\Z}{{\ZZ}}
\def\dv{\mbox{div}}
\def\dive{\mathop{\rm div}\nolimits}
\def\Supp{\mathop{\rm Supp}\nolimits\ }
\def\no{\noindent}
\def\na{\nabla}
\def\p{\partial}
\newcommand{\beq}{\begin{equation}}
\newcommand{\eeq}{\end{equation}}
\newcommand{\ben}{\begin{eqnarray}}
\newcommand{\een}{\end{eqnarray}}
\newcommand{\beno}{\begin{eqnarray*}}
\newcommand{\eeno}{\end{eqnarray*}}
\newcommand{\andf}{\quad\hbox{and}\quad}
\newtheorem{defi}{Definition}[section]
\newtheorem{thm}{Theorem}[section]
\newtheorem{lem}{Lemma}[section]
\newtheorem{rmk}{Remark}[section]
\newtheorem{prop}{Proposition}[section]
\newcommand{\vv}[1]{\boldsymbol{#1}}
\begin{document}

\title[Global Small solutions of 2-D MHD system ]
{ Global small solutions of 2-D incompressible MHD  system }
\author[F. Lin]{Fanghua Lin}
\address [F. Lin]%
{Courant Institute, New York University, New York, NY 10012}
\email{linf@cims.nyu.edu}
\author[L. Xu]{Li Xu}
\address[L. Xu]%
{LSEC, Institute of Computational Mathematics, Academy of Mathematics and Systems Science, CAS\\
Beijing 100190, CHINA}
\email{ xuliice@lsec.cc.ac.cn}
\author[P. Zhang]{Ping Zhang}%
\address[P. Zhang]
 {Academy of Mathematics and Systems Science and  Hua Loo-Keng Key Laboratory of Mathematics,
  Chinese Academy of Sciences, Beijing 100190, CHINA} \email{zp@amss.ac.cn}
\date{5/May/2013}
\maketitle
\begin{abstract} In this paper, we consider the global wellposedness of
 2-D incompressible magneto-hydrodynamical  system with smooth initial
data which is close to some non-trivial steady state. It is a coupled
system between the Navier-Stokes equations and
a free transport equation with an universal nonlinear coupling
structure. The main difficulty of the proof lies in exploring the
dissipative mechanism of the system. To achieve this and to avoid
the difficulty of propagating anisotropic regularity for the free
transport equation, we first reformulate our system \eqref{1.1} in
the Lagrangian coordinates \eqref{a14}. Then we employ anisotropic
Littlewood-Paley analysis to establish the key {\it a priori}
$L^1(\R^+; Lip(\R^2))$ estimate for the Lagrangian velocity field
$Y_t$.  With this estimate, we can prove the global wellposedness of
\eqref{a14} with smooth and small initial data by using the energy
method. We emphasize that the algebraic structure of \eqref{a14} is
crucial for the proofs to work. The global wellposedness of the
original system \eqref{1.1} then follows by a suitable change of
variables.
\end{abstract}

\noindent {\sl Keywords:} Inviscid MHD system, Anisotropic
Littlewood-Paley Theory, Dissipative

 \qquad\qquad
estimates, Lagrangian coordinates\

\vskip 0.2cm

\noindent {\sl AMS Subject Classification (2000):} 35Q30, 76D03  \

\setcounter{equation}{0}
\section{Introduction}

In this paper, we investigate the  global wellposedness of the
following 2-D incompressible   magneto-hydrodynamical  system:
\begin{equation}\label{1.1}
 \left\{\begin{array}{l}
\displaystyle \pa_t \phi+\vv u\cdot\na\phi=0,\qquad (t,x)\in\R^+\times\R^2, \\
\displaystyle \pa_t \vv u +\vv u\cdot\na\vv u -\D\vv u+\na p=-\dv\bigl[\na\phi\otimes\na\phi\bigr], \\
\displaystyle \dv\,\vv u = 0, \\
\displaystyle \phi|_{t=0}=\phi_0,\quad  \vv u|_{t=0}=\vv u_0,
\end{array}\right.
\end{equation}
with initial data $(\phi_0,\vv u_0)$ smooth and close enough to the
equilibrium state $(x_2,\vv 0).$ Here $\phi$ denotes the magnetic
potential and $\vv u=(u^1, u^2)^T,$ $ p$ the velocity and scalar
pressure of the fluid respectively.

Recall that the general MHD system in $\R^d$ reads
\begin{equation}
 \left\{\begin{array}{l}
\displaystyle \pa_t\vv b+\vv u\cdot\na\vv b=\vv b\cdot\na\vv u,\qquad (t,x)\in\R^+\times\R^d, \\
\displaystyle \pa_t\vv u +\vv u\cdot\na\vv u -\D\vv u+\na p=-\f12\na|\vv b|^2+\vv b\cdot\na\vv b, \\
\displaystyle \dv\,\vv u =\dv\,\vv b=0, \\
\displaystyle \vv b|_{t=0}=\vv b_0,\quad \vv u|_{t=0}=\vv u_0,
\end{array}\right. \label{1.1a}
\end{equation}
where $\vv b=(b^1,\cdots,b^d)^T$ denotes the magnetic field, and
$\vv u=(u^1,\cdots,u^d)^T,$ $ p$ the velocity and scalar pressure of
the fluid respectively.  This MHD system \eqref{1.1a} with zero
diffusivity in the equation for the magnetic field can be applied to
model plasmas when the plasmas are strongly collisional, or the
resistivity due to these collisions are extremely small. It often
applies to the case when one is interested in the k-length scales
that are much longer than the ion skin depth and the Larmor radius
perpendicular to the field, long enough along the field to ignore
the Landau damping, and time scales much longer than the ion
gyration time \cite{CP, LL, Ca}. In the particular case when $d=2$
in \eqref{1.1a}, $\dv\,\vv b=0$ implies the existence of a scalar
function $\phi$ so that $\vv b=(\partial_2\phi, -\partial_1\phi)^T,$
and the corresponding system becomes \eqref{1.1}.

It is a long standing open problem that  whether or not classical
solutions of \eqref{1.1a} can develop finite time singularities even
in the 2-D case. Except with full magnetic diffusion in
\eqref{1.1a}, the corresponding 2-D system possesses  a unique
global smooth solution (see \cite{DL,ST} and \cite{AP} for initial
data in the critical spaces). With mixed partial dissipation and
additional (artificial) magnetic diffusion in the 2-D MHD system,
Cao and Wu \cite{CW} (see also \cite{CRW}) proved its global
wellposedness for any data in $H^2(\R^2).$ In \cite{LZ}, we proved
the global wellposedness of a three dimensional version of
\eqref{1.1} with smooth initial data which is close to a non-trivial
steady state. The aim of this paper is to establish the global existence
and uniqueness of solutions to the MHD equation \eqref{1.1} in the
2-D case with the same class of the initial data.

We note that the system \eqref{1.1} has appeared in many problems,
see the recent survey article \cite{Lin}. For the inviscid,
incompressible MHD equations \eqref{1.1a}, it is an important
problem that if it possesses a dissipation mechanism even though the
magnetic diffusivity is close to zero. The heating of high
temperature plasmas by MHD waves is one of the most interesting and
challenging problems of plasma physics especially when the energy is
injected into the system at the length scales much larger than the
dissipative ones. Indeed it has been conjectured that in the MHD
systems, energy is dissipated at a rate that is independent of the
ohmic resistivity \cite{ChCa}. In other words, the diffusivity for
the magnetic field equation can be zero yet the whole system may
still be dissipative. We shall justify this conjecture for initial
data sufficiently close to a non-trivial equilibrium state in the
two-dimensional case. Here the dissipation property is closely
related to a partial dissipative property (in spatial directions) of
magnetic waves due to non-trivial background magnetic fields. Our
global existence results are solely based on the latter property.
For this reason we also conjecture that such global well posedness
results would not be possible without non-trivial background
magnetic fields.

Notice that, after substituting $\phi=x_2+\psi$ into \eqref{1.1},
one obtains the following system for $(\psi,\vv u):$
\begin{equation}\label{1.2}
 \quad\left\{\begin{array}{l}
\displaystyle \pa_t \psi +\vv u \cdot \grad \psi+u^2=0,\qquad (t,x)\in \R^+\times\R^2,\\
\displaystyle \pa_t u^1 + \vv u \cdot \grad u^1-\D u^1+\p_1\p_2\psi=-\p_1p-\dv\bigl[\p_1\psi\na\psi\bigr]\eqdefa f^1, \\
\displaystyle \pa_t u^2 +\vv u \cdot \grad u^2-\D
u^2+(\D+\p_2^2)\psi=-\p_2p
-\dv\bigl[\p_2\psi\na\psi\bigr]\eqdefa f^2, \\
\displaystyle \dv\,\vv u = 0, \\
\displaystyle (\psi,\vv u)|_{t=0}=(\psi_0,\vv u_{0}).
\end{array}\right.
\end{equation}

Starting from \eqref{1.2}, standard energy estimate gives rise to
\beq\label{1.3} \f12\f{d}{dt}\bigl(\|\na\psi(t)\|_{L^2}^2+\|\vv
u(t)\|_{L^2}^2\bigr)+\|\na \vv u(t)\|_{L^2}^2=0 \eeq for smooth
enough solutions $(\psi,\vv u)$ of \eqref{1.2}. The main difficulty
to prove the global existence of small smooth solutions to
\eqref{1.2} is thus to find a  dissipative mechanism for $\psi.$
Motivated by the heuristic analysis in Subsection 2.1, we shall
first write the system \eqref{1.1} in the Lagrangian formulation
\eqref{a14}. There is, however, a subtle technical difficult for MHD
equations in this formulation unlike many other fluid dynamic problems.
We need to introduce a notion of admissible initial data (see Definition
1.1 below).  Next, we employ anisotropic Littlewood-Paley theory to
capture the delicate dissipative property of $Y_t$  in Section 3. It
turns out that  $\p_{y_1}Y$ decays faster
  than $\pa_{y_2}Y.$  This, in some
sense, also justifies the necessity of using anisotropic
Littlewood-Paley theory in Section 3. With the key {\it a priori}
$L^1(\R^+;Lip(\R^2))$ estimate for $Y_t,$ we shall prove the global
wellposedness of \eqref{a14} in Section 4.

To describe the initial data $\phi_0$ in \eqref{1.1}, we need the
following definition:

\begin{defi}\label{def1.1ad}
Let $\vv b=(b^1,b^2)^T$ be a smooth enough vector field. We define its
trajectory $X(t,x)$ by \beq\label{d3} \left\{\begin{array}{l}
\displaystyle \f{d X(t,x)}{dt}=\vv b(X(t,x)), \\
\displaystyle X(t,x)|_{t=0}=x.
\end{array}\right. \eeq
We call that $f$ and $\vv b$ are admissible on a domain $D$ of
$\R^2$ if there holds \beno \int_{\R}f(X(t,x))\,dt=0\quad\mbox{for
\, all }\quad x\in D. \eeno
\end{defi}

\begin{rmk}
The condition that $f$ and $\vv b$ are admissible on $\R^2$ (or some
subsets of $\R^2$) is to guarantee that \beq\label{d1} \vv
b\cdot\na\psi=f\quad\mbox{on}\quad \R^2, \eeq has a solution $\psi$
so that $\lim_{|x|\to \infty}\psi(x)=0.$ Let us take $\vv b=(1,0)^T$
for example. In this case, \eqref{d1} becomes $\p_{x_1}\psi=f,$
which together with the condition $\lim_{|x_1|\to\infty}\psi(x)=0$
ensures that \beno \psi(x_1,x_2)=-\int_{x_1}^\infty
f(s,x_2)\,ds=\int_{-\infty}^{x_1}f(s,x_2)\,ds. \eeno We thus obtain
that $\int_{\R}f(s,x_2)\,ds=0,$ that is, $f$ and $(1,0)^T$ are
admissible on $\{0\}\times\R.$
\end{rmk}

In what follows, for $X_1, X_2$ being two Banach spaces,  we always
denote the norms $\|\cdot\|_{X_1\cap
X_2}\eqdefa\|\cdot\|_{X_1}+\|\cdot\|_{X_2}$ and
$\|\cdot\|_{L^p(\R^+;X_1\cap
X_2)}\eqdefa\|\cdot\|_{L^p(\R^+;X_1)}+\|\cdot\|_{L^p(\R^+;X_2)}$ for
$p\in[1,\infty]$.

We now present our main result in this paper:

\begin{thm}\label{th2}
{\sl Let $s_1>1$, $s_2\in (-1,-\f12)$ and $s\geq s_1+2$. Given
$(\psi_0,\vv u_0)$ satisfying $\na\psi_0\in H^{s}\cap\dH^{s_2}(\R^2)$, $\vv
u_0\in H^{s}\cap\dH^{s_2}(\R^2)$ and \beq \label{1.5a}
 \|\na\psi_0\|_{\dH^{s_1+2}}\leq 1,\quad
\|\na\psi_0\|_{\dH^{s_1+1}\cap \dH^{s_2}}+
\|\p_{x_2}\psi_0\|_{H^{s_1+2}}+\|\vv u_0\|_{\dH^{s_1+1}\cap
\dH^{s_2}}\leq c_0 \eeq for some $c_0$ sufficiently small. We assume
moreover that $\p_{x_2}\psi_0$ and
$\bigl(1+\p_{x_2}\psi_0,-\p_{x_1}\psi_0\bigr)^T$ are admissible on
$\{0\}\times\R$ and $\Supp\p_{x_2}\psi_0(\cdot,x_2)\subset [-K,K]$ for
some positive number $K.$ Then \eqref{1.2} has a unique global
solution $(\psi,
 \vv u, p)$ (up to
a constant for $ p$) so that \beq\label{th1wqa}
\begin{split}
&\na\psi\in  C([0,\infty); H^s\cap\dH^{s_2}(\R^2))\cap L^2(\R^+;\dH^{s_1+1}\cap\dH^{s_2+1}(\R^2)),\\
& \vv u\in  C([0,\infty);
H^{s}\cap\dH^{s_2}(\R^2))\cap L^2(\R^+; \dH^{s_1+2}\cap\dH^{s_2+1}(\R^2))\cap L^1(\R^+;Lip(\R^2)),\\
&\na\vv u\in L_{\mbox{loc}}^2(\R^+;H^{s}(\R^2)),\quad\na p\in
C([0,\infty); H^{s-1}(\R^2))\cap L^2(\R^+;
\dH^{s_1}\cap\dH^{s_2}(\R^2))
\end{split}
\eeq Furthermore, there holds
 \beq\label{th1wra}
\begin{split}
&\|\vv u\|_{L^\infty(\R^+; \dH^{s_1+1}\cap\dH^{s_2})}
+\|\na\psi\|_{L^\infty(\R^+;\dH^{s_1+1}\cap\dH^{s_2})}+\| \vv
u\|_{L^2(\R^+;\dH^{s_1+2}\cap\dH^{s_2+1})}\\
&\quad+\|\na\psi\|_{L^2(\R^+;\dH^{s_1+1}\cap\dH^{s_2+1})}
+\|\na\vv u\|_{L^1(\R^+;L^\infty)}+\|\na p\|_{L^2(\R^+;\dH^{s_1}\cap\dH^{s_2})}\\
&\leq C\bigl( \|\na\psi_0\|_{\dH^{s_1+1}\cap \dH^{s_2}}+
\|\p_{x_2}\psi_0\|_{H^{s_1+2}}+\|\vv
u_0\|_{\dH^{s_1+1}\cap \dH^{s_2}}\bigr).
\end{split}
\eeq }
\end{thm}

\begin{rmk}
We can replace the condition that:
$\Supp\p_{x_2}\psi_0(\cdot,x_2)\subset [-K,K]$ for some positive
number $K$, in Theorem \ref{th2} by assuming appropriate decay of
$\p_{x_2}\psi_0(x)$ with respect to $x_1$ variable. To make the
presentation more transparent, we would not emphasize this technical
point here.
\end{rmk}

\begin{rmk}
 As $\psi$ is a scalar function in \eqref{1.2}, we can not apply the
 ideas and analysis developed  in \cite{LLZ, LZ7, Ch-Zh, LLZhen} for
the viscoelastic fluid system to solve \eqref{1.2}. Though it may sound
to be a technical reason, there is, in fact, a fundamental difference
between the viscoelastic fluid system and \eqref{1.2} which one can see
from \eqref{a14} and \eqref{a12}. We would like also to point out that
it is more tricky to find a mechanism for dissipation in \eqref{1.2} than
the case of the classical isentropic compressible Navier-Stokes system
(CNS), see for example \cite{Da}.
\end{rmk}

 \begin{rmk} For the three-dimensional version of the  system
\eqref{1.1}, we \cite{LZ} introduced the function spaces $\cB^{s_1,s_2}$
(see Definition \ref{def2} below).
 Due the difficulty in understanding the propagating of anisotropic
regularity for the transport equations, where we first
established  the {\it a priori} $L^1(\R^+;\cB^{\f52-\d,\d})$
($\d\in (\f12,1)$) estimate for $u^3,$ the third component of the
velocity field,  before proving the key $L^1(\R^+; Lip(\R^3))$ estimate
of $u^3.$ In doing so, it was essential that
$f^v\eqdefa-\sum_{i,j=1}^3\p_3(-\D)^{-1}
\bigl[\p_iu^j\p_ju^i+\p_i\p_j(\p_i\psi\p_j\psi)\bigr]-\sum_{j=1}^3\p_j(\p_3\psi\p_j\psi)$
 belongs to ${L}^1(\R^+,\cB^{\f12-\d,\d}).$ This latter requirement is
essentially equivalent to that $f^v\in
L^1(\R^+;\cB^{\f12-\d}_{2,1}(\R_v)(\cB^{\d}_{2,1}(\R^2_h)))$ for
$\d\in (\f12,1).$ In the 2-D case, this would require
$f^2$ given by \eqref{1.2} belonging to ${L}^1(\R^+;\cB^{-\d}_{2,1}(\R_v)(\cB^{\d}_{2,1}(\R_h)))$ for $\d\in
(\f12,1).$ The latter, however, is impossible due to the product
laws in Besov spaces for the vertical variable. This gives another good
reason why we will use the Lagrangian formulation of
\eqref{1.1} in this paper.
\end{rmk}

Let us complete this section by the notations we shall use in this paper.\\

\no{\bf Notations.} For any $s\in\R$, we denote by $H^s(\R^2)$ the
classical  $L^2$ based Sobolev spaces with the norm
$\|a\|_{H^s}\eqdefa\bigl(\int_{\R^2}(1+|\xi|^2)^s|\hat{a}(\xi)|^2\,d\xi\bigr)^{\f12},$
while $\dot{H}^s(\R^2)$ the classical homogenous Sobolev spaces with
the norm
$\|a\|_{\dot{H}^s}\eqdefa\bigl(\int_{\R^2}|\xi|^{2s}|\hat{a}(\xi)|^2\,d\xi\bigr)^{\f12}$.
Let $A, B$ be two operators, we denote $[A;B]=AB-BA,$ the commutator
between $A$ and $B$. For $a\lesssim b$, we mean that there is a
uniform constant $C,$ which may be different on different lines,
such that  $a\leq Cb,$ and $a\sim b$ means that both $a\lesssim b$
and $b\lesssim a$. We shall denote by $(a\ |\ b)$  the $L^2(\R^2)$
inner product of $a$ and $b.$ $(d_{j,k})_{j,k\in\Z}$ (resp.
$(c_j)_{j\in\Z}$) will be a generic element of $\ell^1(\Z^2)$ (resp.
$\ell^2(\Z))$ so that $\sum_{j,k\in\Z}d_{j,k}=1$ (resp.
$\sum_{j\in\Z}c_j^2=1).$ Finally, we denote by $L^p_T(L^q_h(L^r_v))$
the space $L^p([0,T]; L^q(\R_{x_1};L^r(\R_{x_2})))$.

\medskip

\setcounter{equation}{0}
\section{Lagrangain formulation of the  system \eqref{1.1}
} \label{sect2}

As in \cite{C}, \cite{CS}, \cite{CS2}, \cite{GMS}, \cite{OC},
\cite{SZ},
 \cite{XZZ} and various earlier references therein,  we
shall use the Lagrangian formulation. At first, we solve the couple
system between \eqref{1.1} and the following additional  transport
equation: \beq\label{a3} \p_t\tilde\phi+ \vv
u\cdot\na_x\tilde\phi=0,\quad\tilde\phi|_{t=0}=\tilde\phi_0, \eeq
where $\tilde\phi_0=-x_1+\tilde\psi_0,$ and $\tilde\psi_0$ is
determined by \beq\label{a2} \det\,U_0=1\quad \mbox{for}\quad
U_0=\begin{pmatrix} 1+\p_{x_2}\psi_0& \p_{x_2}\tilde\psi_0 \\
-\p_{x_1}\psi_0&1-\p_{x_1}\tilde\psi_0
\end{pmatrix}. \eeq
The existence of $\tilde\psi_0$ will be a consequence of Lemma
\ref{lemf1}.

Setting $\wt{\phi}=-x_1+\tilde\psi$ in \eqref{a3} yields
\begin{equation}\label{1.4ad}
 \pa_t \wt{\psi} +\vv u \cdot \grad \wt{\psi}-u^1=0,\quad \mbox{and}\quad
 \wt{\psi}|_{t=0}=\tilde{\psi}_0.
\end{equation}

The main result concerning the global small solutions to the coupled
system \eqref{1.1} and \eqref{a3} can be stated as follows:

\begin{thm}\label{th1}
{\sl Let $s_1>1,$ $s_2\in (-1,-\f12).$ Given
$(\phi_0,\tilde\phi_0,\vv u_0)\eqdefa(x_2+\psi_0,
-x_1+\tilde{\psi}_0, \vv u_0)$ satisfying $(\na \psi_0,\vv u_0)\in
\dH^{s_1+1}(\R^2)\cap \dH^{s_2}(\R^2)$,
$\na\tilde{\psi}_0\in\dH^{s_1+1}(\R^2)\cap \dH^{s_2+1}(\R^2)$ and
\eqref{a2}, the coupled system \eqref{1.1} and \eqref{a3} has a
unique global solution $(\phi, \tilde\phi, \vv u, p)=(x_2+\psi,
-x_1+\tilde\psi, \vv u, p)$ (up to a constant for $\psi, \tilde\psi,
p$) so that \beq\label{th1wq}
\begin{split}
&\na\psi\in  C([0,\infty); \dH^{s_1+1}(\R^2)\cap
\dH^{s_2}(\R^2))\cap
L^2(\R^+;\dH^{s_1+1}(\R^2)\cap \dH^{s_2+1}(\R^2)),\\
& \na\wt{\psi}\in C([0,\infty); \dH^{s_1+1}(\R^2)\cap
\dH^{s_2+1}(\R^2)),\\
&\vv u\in  C([0,\infty); \dH^{s_1+1}(\R^2)\cap \dH^{s_2}(\R^2))\cap
L^2(\R^+;\dH^{s_1+2}(\R^2)\cap \dH^{s_2+1}(\R^2))\\
&\qquad\cap
L^1(\R^+;Lip(\R^2)),\\
&\na p\in  L^2(\R^+;\dH^{s_1}(\R^2)\cap \dH^{s_2}(\R^2)),
\end{split}
\eeq provided that \beq \label{1.5} \|\na\psi_0\|_{\dH^{s_1+1}\cap
\dH^{s_2}}+ \|\na\tilde\psi_0\|_{\dH^{s_1+1}\cap \dH^{s_2+1}}+\|\vv
u_0\|_{\dH^{s_1+1}\cap \dH^{s_2}}\leq c_0 \eeq for some $c_0$
sufficiently small. Furthermore, there holds
 \beq\label{th1wr}
\begin{split}
&\|\vv u\|_{L^\infty(\R^+; \dH^{s_1+1}\cap\dH^{s_2})}
+\|\na\psi\|_{L^\infty(\R^+;\dH^{s_1+1}\cap\dH^{s_2})}+\|\na\tilde\psi\|_{L^\infty(\R^+;\dH^{s_1+1}\cap\dH^{s_2+1})}\\
&\quad+\| \vv
u\|_{L^2(\R^+;\dH^{s_1+2}\cap\dH^{s_2+1})}+\|\na\psi\|_{L^2(\R^+;\dH^{s_1+1}\cap\dH^{s_2+1})}
\\
&\quad+\|\na\vv u\|_{L^1(\R^+;L^\infty)}+\|\na p\|_{L^2(\R^+;\dH^{s_1}\cap\dH^{s_2})}\\
&\leq C\bigl( \|\na\psi_0\|_{\dH^{s_1+1}\cap \dH^{s_2}}+
\|\na\tilde\psi_0\|_{\dH^{s_1+1}\cap \dH^{s_2+1}}+\|\vv
u_0\|_{\dH^{s_1+1}\cap \dH^{s_2}}\bigr).
\end{split}
\eeq }
\end{thm}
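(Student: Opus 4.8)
The plan is to reduce Theorem \ref{th1} to a global well-posedness statement for the Lagrangian reformulation \eqref{a14} of the coupled system \eqref{1.1}--\eqref{a3}, and then transfer the result back to Eulerian variables. First I would introduce the flow map $X(t,\cdot)$ of $\vv u$, defined by $\pa_t X(t,y)=\vv u(t,X(t,y))$, $X(0,y)=y$, and write $X=y+Y$. Incompressibility forces $\det(\Id+\na_y Y)\equiv 1$, while the two transport equations integrate along trajectories: $\phi(t,X(t,y))=\phi_0(y)=y_2+\psi_0(y)$ and $\tilde\phi(t,X(t,y))=\tilde\phi_0(y)=-y_1+\tilde\psi_0(y)$. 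The normalization \eqref{a2} is exactly what is needed so that the matrix assembled from $(\na\phi,\na\tilde\phi)$, pulled back to Lagrangian coordinates, has determinant one; this is the algebraic feature that lets \eqref{1.1}--\eqref{a3} mimic the viscoelastic system \eqref{phi}. Substituting into the momentum equation through $\na_x=(\Id+\na_y Y)^{-T}\na_y$ yields a closed quasilinear parabolic system \eqref{a14} for $(Y,Q)$ (with $Q$ the Lagrangian pressure) whose linearization about $Y=0$ is of damped--wave type, the $y_1$-direction being more strongly damped than the $y_2$-direction.

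The core is the a priori analysis, organized as a continuity argument on $[0,T]$ under the standing bound $\|\na Y\|_{L^\infty_T L^\infty}\le\eta$ with $\eta$ small, so that $\Id+\na Y$ is invertible with controlled inverse and all the coefficient terms in \eqref{a14} are perturbative. I would establish, in order: (a) coupled high/low energy estimates for $(Y_t,\na Y)$ in $\dH^{s_1+1}\cap\dH^{s_2}$, using the viscous dissipation $\|\na Y_t\|_{L^2}^2$ together with the cancellation of the top-order elastic coupling, which controls $\|Y_t\|_{L^\infty_T(\dH^{s_1+1}\cap\dH^{s_2})}$, $\|\na Y_t\|_{L^2_T(\dH^{s_1+1}\cap\dH^{s_2})}$ and $\|\na Y\|_{L^\infty_T(\dH^{s_1+1}\cap\dH^{s_2})}$; (b) the hidden dissipative estimate for $\na Y$ itself, obtained in the spirit of Danchin's treatment of the compressible Navier--Stokes system by testing \eqref{a14} against a suitable multiplier built from $Y_t$ and $-\D Y$, yielding $\|\na Y\|_{L^2_T(\dH^{s_1+1}\cap\dH^{s_2+1})}$; and (c) the key estimate $Y_t\in L^1(\R^+;Lip(\R^2))$, proved by anisotropic Littlewood--Paley analysis (separate horizontal and vertical dyadic localizations in the spirit of the spaces $\cB^{s_1,s_2}$ of Definition \ref{def2}), exploiting the faster decay of $\pa_{y_1}Y$ compared to $\pa_{y_2}Y$; here the negative index $s_2\in(-1,-\f12)$ provides the low-frequency time decay that makes the $L^1_t$ and $L^2_t$ norms finite. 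Collecting (a)--(c), all these quantities are bounded by $C$ times the data norm appearing on the right of \eqref{th1wr}, which by \eqref{1.5} is $\le\eta/2$; hence the continuation threshold is never reached and $T=\infty$.

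The remaining ingredients are comparatively routine. Local existence for \eqref{a14} follows from a Friedrichs/fixed-point scheme for quasilinear parabolic systems, valid as long as $\Id+\na Y$ stays invertible, and gluing it with the global bounds of the previous paragraph gives a global solution in the class \eqref{th1wq}. Uniqueness comes from an energy estimate for the difference of two solutions at the level $\dH^{s_2}\times\dH^{s_2+1}$, the advection and coefficient errors being absorbed through the $L^1_t Lip$ bound and Gr\"onwall's inequality. Finally, since $Y_t\in L^1(\R^+;Lip)$, for each $t$ the map $X(t,\cdot)=\Id+Y(t,\cdot)$ is a measure-preserving bi-Lipschitz diffeomorphism of $\R^2$ with $\na X,\na X^{-1}\in L^\infty$ uniformly in $t$, and $t\mapsto X(t,\cdot)$ is continuous; then $\vv u(t,x)\eqdefa Y_t(t,X^{-1}(t,x))$, $\phi\eqdefa\phi_0\circ X^{-1}$, $\tilde\phi\eqdefa\tilde\phi_0\circ X^{-1}$, and $p$ built from $Q\circ X^{-1}$, solve \eqref{1.1}--\eqref{a3}; the regularity \eqref{th1wq} and the bounds \eqref{th1wr} follow from the Lagrangian estimates together with composition and product estimates in Sobolev spaces, using $s_1>1$ to get $\dH^{s_1+1}\hookrightarrow W^{1,\infty}$ and the relevant product laws, while Eulerian uniqueness follows from Lagrangian uniqueness plus uniqueness of the Lipschitz flow.

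The hard part will be step (c), the a priori $L^1(\R^+;Lip(\R^2))$ bound on $Y_t$. Because $\psi$ is scalar (unlike the vector $\vv\phi$ in \eqref{phi}), the naive vertical Besov product laws fail, as the remarks emphasize, so one is forced into genuinely anisotropic Littlewood--Paley estimates, carefully tracking that $\pa_{y_1}Y$ is more strongly dissipated than $\pa_{y_2}Y$ and that the precise divergence form of the nonlinearities in \eqref{a14} keeps the dangerous vertical contributions integrable. Pinning down the functional framework --- anisotropic $\cB^{s_1,s_2}$-type spaces tuned by the negative exponent $s_2$ --- so that every nonlinear term closes is the crux of the whole argument.
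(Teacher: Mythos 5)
Your overall route is the paper's route (Lagrangian reformulation, coupled energy estimates with a hidden dissipation for $\p_1Y$, an anisotropic Littlewood--Paley $L^1_t(Lip)$ bound on $Y_t$, a continuity argument, and transfer back via composition estimates), but there is a concrete gap at the very first step. You take the flow map with $X(0,y)=y$, i.e.\ $Y_0=0$. With that choice the pulled-back magnetic field is $\vv b\circ X=(\na_yX)\,\vv b_0(y)$ with $\vv b_0=(1+\p_2\psi_0,-\p_1\psi_0)^T$, so the Lorentz force becomes $(\vv b_0\cdot\na_y)\bigl((\na_yX)\vv b_0\bigr)$ --- a variable-coefficient second-order operator depending on $\psi_0$ --- and \emph{not} the constant-coefficient term $\p_{y_1}^2Y$ of \eqref{a14}. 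The entire subsequent analysis (the eigenvalues \eqref{5.2}, the anisotropy of the dissipation, Propositions~\ref{p1}--\ref{p4}) is built on the exact form $Y_{tt}-\na_Y\cdot\na_Y Y_t-\p_{y_1}^2Y+\na_Yq=0$, so it does not apply to what you would actually obtain. The missing idea is that the \emph{initial} parametrization must itself be a nontrivial measure-preserving diffeomorphism $X_0(y)=y+Y_0(y)$ chosen so that $U_0\circ X_0=\na_yX_0$, i.e.\ so that the initial field lines are straightened and $\cA_Y(\vv b\circ X)\equiv(1,0)^T$ for all time; this is exactly Lemma~\ref{initial}, proved by the implicit function theorem, and it is here (not merely in ``having determinant one'') that the normalization \eqref{a2} and the auxiliary potential $\tilde\psi_0$ are used. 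You have misread the role of \eqref{a2}: $\det U_0=1$ is the integrability condition guaranteeing that $U_0$ is realizable as the Jacobian of a volume-preserving map, and without constructing that map the reduction to \eqref{a14} fails.

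Two further points, secondary by comparison. First, once $Y_0\neq0$ you must also track $\p_1Y_0\in\dH^{s_2}$ and the compatibility $\na_{Y_0}\cdot Y_1=0$ with $Y_1=\vv u_0\circ X_0$ (the hypotheses \eqref{A1a}--\eqref{A1} of Theorem~\ref{T}), and the constraint propagated in time is $\na_y\cdot Y=\r(Y)$ rather than $\na_y\cdot Y=0$; your sketch of steps (a)--(c) is silent on this, yet the identity $\na\cdot Y=\p_1Y^2\p_2Y^1-\p_1Y^1\p_2Y^2$ is what gives the improved estimates for $Y^2$ and the $L^1_t$ bound on $\na q$ that close the energy scheme. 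Second, your description of (a)--(c) is a plan rather than a proof: the crux acknowledged in your last paragraph --- closing the product estimates in the anisotropic spaces $\cB^{s_1,s_2}$ so that every nonlinear contribution to $\vv f(Y,q)$ lands in $L^1_T(\cB^{0,0})$ --- is precisely where the paper expends Propositions~\ref{p2}--\ref{p3} and Lemmas~\ref{L3}--\ref{L4}, and none of that is supplied here.
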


In order to avoid the difficulty of propagating anisotropic
regularity for the free transport equation in the coupled system
\eqref{1.1} and \eqref{a3}, we shall first write them
in the Lagrangian coordinates. Indeed let $U\eqdefa\begin{pmatrix} \p_{x_2}\phi& \p_{x_2}\tilde\phi \\
-\p_{x_1}\phi&-\p_{x_1}\tilde\phi
\end{pmatrix},$ we deduce from \eqref{1.1} and \eqref{a3} that $U$
solves\beq\label{a5} \p_tU+ \vv u\cdot\na_x U=\na_x\vv u\ U,\quad
U|_{t=0}=U_0,\eeq with $U_0$ being given by \eqref{a2}.

To write the nonlinear term,
$\dive\bigl[\na\phi\otimes\na\phi\bigr],$ in the momentum equation
of \eqref{1.1},  into a clear form in the Lagrangian formulation, we
need the following lemma concerning the structure of $U_0.$

\begin{lem}\label{initial}
{\sl Let
$\phi_0(x)=x_2+\psi_0(x),\,\tilde\phi_0(x)=-x_1+\tilde\psi_0,$ with
$\psi_0\in\dH^{\tau_1+1}(\R^2)\cap \dH^{\tau_2+1}(\R^2)$,
$\tilde\psi_0\in\dH^{\tau_1+1}(\R^2)\cap \dH^{\tau_2+2}(\R^2)$ for
$\tau_1\in (2,\infty),$ $\tau_2\in (-1,0),$ and
$\|\na\psi_0\|_{L^\infty}+\|\na\tilde\psi_0\|_{L^\infty}\leq\e_0$
for some $\e_0$ sufficiently small. Let $U_0$ satisfy \eqref{a2}.
 Then there exists  $Y_0=(Y_0^1,Y_0^2)^T\in \dH^{\tau_1+1}(\R^2)\cap
\dH^{\tau_2+2}(\R^2)$ with $\p_1Y_0\in\dH^{\tau_2}(\R^2)$ so that
$X_0(y)\eqdefa y+Y_0(y)$ satisfies \beq\label{app0} U_0\circ
X_0(y)=\na_y X_0(y)=I+\na_y Y_0(y). \eeq Moreover, there holds
\beq\label{app1}\begin{aligned}
&\|Y_0\|_{\dH^{\tau_1+1}\cap\dH^{\tau_2+2}}+\|\p_1Y_0\|_{\dH^{\tau_2}}\\
&\leq
C(\|\psi_0\|_{\dH^{\tau_1+1}\cap\dH^{\tau_2+1}},\|\tilde\psi_0\|_{\dH^{\tau_1+1}\cap\dH^{\tau_2+2}}
)\bigl(\|\psi_0\|_{\dH^{\tau_1+1}\cap\dH^{\tau_2+1}}+\|\tilde\psi_0\|_{\dH^{\tau_1+1}\cap\dH^{\tau_2+2}}\bigr),
\end{aligned}\eeq
where $C(\lam_1,\lam_2)$ is a constant depending on $\lam_1$ and
$\lam_2$ non-decreasingly.}
\end{lem}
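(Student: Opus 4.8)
The plan is to realise $X_0$ as the inverse of an explicit near‑identity diffeomorphism read off directly from the data, with \eqref{a2} entering only through the Cramer rule for inverting a $2\times2$ matrix of determinant one. Set $\Theta_0(x)\eqdef(-\tilde\phi_0(x),\phi_0(x))=x+W_0(x)$ with $W_0\eqdef(-\tilde\psi_0,\psi_0)$, so that $\na_x\Theta_0=I+\na_x W_0$ and, by \eqref{a2}, $\det\na_x\Theta_0=\det U_0=1$; since a $2\times2$ matrix $\bigl(\begin{smallmatrix}a&b\\ c&d\end{smallmatrix}\bigr)$ with $ad-bc=1$ has inverse $\bigl(\begin{smallmatrix}d&-b\\ -c&a\end{smallmatrix}\bigr)$, a one–line computation gives $(\na_x\Theta_0)^{-1}=U_0$. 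Because $\|\na_x W_0\|_{L^\infty}\le\|\na\psi_0\|_{L^\infty}+\|\na\tilde\psi_0\|_{L^\infty}\le\e_0<1$, for each $y$ the map $x\mapsto y-W_0(x)$ is a contraction of $\R^2$, whose fixed point defines $X_0(y)=y+Y_0(y)$; this $X_0$ is a $C^1$ diffeomorphism of $\R^2$ (smoother, commensurately with the data) with $\Theta_0\circ X_0=\Id$ and $Y_0=-W_0\circ X_0$. Differentiating $\Theta_0\circ X_0=\Id$ gives $\na_y X_0=[(\na_x\Theta_0)\circ X_0]^{-1}=U_0\circ X_0=I+\na_y Y_0$, which is precisely \eqref{app0}; moreover $\det\na_y X_0\equiv1$, so $X_0$ is measure preserving, a fact I shall use repeatedly.

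For the estimate \eqref{app1} I would not work with $Y_0=-W_0\circ X_0$ but instead feed \eqref{app0} back in: $\na_y Y_0=(U_0-I)\circ X_0$, whose first column is $(\p_{x_2}\psi_0,-\p_{x_1}\psi_0)\circ X_0$ and whose second column is $(\p_{x_2}\tilde\psi_0,-\p_{x_1}\tilde\psi_0)\circ X_0$. Thus $\p_1 Y_0$ involves only $\na\psi_0\in\dH^{\tau_2}\cap\dH^{\tau_1}$, whereas $\p_2 Y_0$ involves only $\na\tilde\psi_0\in\dH^{\tau_2+1}\cap\dH^{\tau_1}$. This column separation is the one place where the constraint $\det U_0=1$ is exploited to gain regularity, and it accounts for $\p_1 Y_0$ being better than $\p_2 Y_0$.

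It remains to push the two columns through a change–of–variables estimate for measure‑preserving near‑identity diffeomorphisms of $\R^2$, namely $\|g\circ X_0\|_{\dH^{s}}\le C(\|\na Y_0\|_{\dH^{\tau_1}})\,\|g\|_{\dH^{s}}$ for $s\in(-1,\tau_1]$ (the lower endpoint being admissible because $-1=-d/2$ is the critical index and $\det\na X_0\equiv1$ legitimises the duality definition of $g\circ X_0$ for $g\in\dH^{\tau_2}$, with $C(\cdot)\approx1$ for small argument). This yields $\|\p_1 Y_0\|_{\dH^{\tau_2}\cap\dH^{\tau_1}}\lesssim\|\psi_0\|_{\dH^{\tau_1+1}\cap\dH^{\tau_2+1}}$ and $\|\p_2 Y_0\|_{\dH^{\tau_2+1}\cap\dH^{\tau_1}}\lesssim\|\tilde\psi_0\|_{\dH^{\tau_1+1}\cap\dH^{\tau_2+2}}$; interpolating the first bound (as $\tau_2<\tau_2+1<\tau_1$) shows $\p_1 Y_0\in\dH^{\tau_2+1}$ as well, hence $\na Y_0\in\dH^{\tau_2+1}\cap\dH^{\tau_1}$, i.e.\ $Y_0\in\dH^{\tau_1+1}\cap\dH^{\tau_2+2}$, together with $\p_1 Y_0\in\dH^{\tau_2}$. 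Adding the two bounds gives \eqref{app1}; the asserted non‑decreasing dependence of the constant on $\|\psi_0\|_{\dH^{\tau_1+1}\cap\dH^{\tau_2+1}}$ and $\|\tilde\psi_0\|_{\dH^{\tau_1+1}\cap\dH^{\tau_2+2}}$ is exactly the dependence of the composition constant on $\|\na Y_0\|_{\dH^{\tau_1}}$, which those two norms control through the previous step.

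The construction of $X_0$ is elementary; the real work, and the main obstacle, is the change–of–variables estimate in homogeneous Sobolev spaces, particularly at the negative endpoint $\tau_2\in(-1,0)$. There $g=(\p_{x_2}\psi_0,-\p_{x_1}\psi_0)\in\dH^{\tau_2}$ is only a distribution, so $g\circ X_0$ must be defined by $\langle g\circ X_0,\vf\rangle=\langle g,\vf\circ\Theta_0\rangle$ (legitimate since $\det\na\Theta_0\equiv1$), and one has to show $\vf\mapsto\vf\circ\Theta_0$ is bounded on $\dH^{-\tau_2}$ with $-\tau_2\in(0,1)$, which follows from the bi‑Lipschitz, measure‑preserving character of $\Theta_0$ via the Gagliardo seminorm; the companion bound at the high index $\tau_1$ is a Moser‑type estimate. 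Finally, because that constant contains $\|\na Y_0\|_{\dH^{\tau_1}}$ — the very quantity being estimated — the argument must first be carried out for mollified data $\psi_0^\e$, with $\tilde\psi_0^\e$ re‑solved from $\det U_0^\e=1$ via Lemma \ref{lemf1} so that the column structure above is preserved and the bounds are uniform in $\e$, and then one passes to the limit. These composition estimates, pushed down to the endpoint $-d/2$ with the anisotropy tracked, are the technical heart of the argument.
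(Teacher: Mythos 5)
Your construction is, up to packaging, the paper's own: your $\Theta_0(x)=(-\tilde\phi_0,\phi_0)=x+W_0$ is precisely the inverse map $X_0^{-1}$ that the paper produces via the implicit function theorem applied to $F(y,Y)=\psi_0(y+Y)+Y^2$ and $G(y,Y)=\tilde\psi_0(y+Y)-Y^1$, and the identity $(\na_x\Theta_0)^{-1}=U_0$ (a consequence of $\det U_0=1$) is exactly how both arguments reach \eqref{app0} and the column formulas $\p_1Y_0=(\p_{x_2}\psi_0,-\p_{x_1}\psi_0)^T\circ X_0$, $\p_2Y_0=(\p_{x_2}\tilde\psi_0,-\p_{x_1}\tilde\psi_0)^T\circ X_0$; the estimate \eqref{app1} then follows from the composition bounds of Lemma \ref{funct}, as you say. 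The one place you over-complicate matters is the alleged circularity: Lemma \ref{funct} estimates $v\circ\Phi^{-1}$ with constants depending on the perturbation of $\Phi$ itself, so taking $\Phi=\Theta_0=X_0^{-1}$ --- which is what the paper does --- makes those constants depend on $\na_x(X_0^{-1}(x)-x)$, whose entries are exactly $\pm\p_{x_i}\psi_0$ and $\pm\p_{x_i}\tilde\psi_0$, i.e.\ directly on the data rather than on $\na Y_0$. Hence no mollification or bootstrap is needed, and your proposal to re-solve $\tilde\psi_0^\e$ via Lemma \ref{lemf1} is in any case out of place here, since in Lemma \ref{initial} the function $\tilde\psi_0$ is given data subject only to \eqref{a2}. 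One further small imprecision: at the top index the composition bound is not a pure operator-norm bound on $\dH^{\tau_1}$; Lemma \ref{funct} (vi) costs the inhomogeneous norm $\|\na v\|_{H^{\tau_1-1}}$, which for $v$ a first derivative of $\psi_0$ or $\tilde\psi_0$ is controlled by $\|\psi_0\|_{\dH^{\tau_2+1}\cap\dH^{\tau_1+1}}+\|\tilde\psi_0\|_{\dH^{\tau_2+2}\cap\dH^{\tau_1+1}}$ by interpolation --- harmless, but it is precisely where the constant in \eqref{app1} acquires its non-decreasing dependence on the norms of the data.
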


\begin{proof} Let $Y=(Y^1, Y^2)^T,$ we denote
\beq\label{app3}\left\{\begin{aligned}
&F(y,Y)\eqdefa\psi_0(y+Y)+Y^2,\\
&G(y,Y)\eqdefa\tilde\psi_0(y+Y)-Y^1.
\end{aligned}\right.\eeq
Notice from the assumption  $\mathrm{det}\, U_0=1$ that \beno
\mathrm{det}\f{\p(F,G)}{\p(Y^1,Y^2)}=\mathrm{det}\begin{pmatrix}
\p_{x_1}\psi_0&1+\p_{x_2}\psi_0\\
\p_{x_1}\tilde\psi_0-1&\p_{x_2}\tilde\psi_0
\end{pmatrix}\Big|_{x=y+Y}=\mathrm{det}\,U_0(y+Y)=1,
\eeno from which and the classical
 implicit function
theorem, we deduce that around every point $y,$ the functions
$F(y,Y)=0$ and $G(y,Y)=0$ determines a unique function
$Y_0(y)=(Y_0^1(y),Y_0^2(y))^T$ so that \beno F(y,
Y_0(y))=0=G(y,Y_0(y)),\eeno or equivalently \beq \label{app3ad}
Y^1_0(y)=\tilde\psi_0(y+Y_0(y))\quad\mbox{and}\quad
Y^2_0(y)=-\psi_0(y+Y_0(y)).
 \eeq
Moreover, there holds \beq \label{app3cd} \left\{\begin{aligned}
&\p_{y_1}Y^1_0(y)=\p_{x_1}\tilde{\psi}_0\circ (y+Y_0(y))(1+\p_{y_1}Y^1_0(y))+\p_{x_2}\tilde{\psi}_0\circ (y+Y_0(y))\p_{y_1}Y^2_0(y),\\
&\p_{y_2}Y^1_0(y)=\p_{x_1}\tilde{\psi}_0\circ (y+Y_0(y))\p_{y_2}Y^1_0(y)+\p_{x_2}\tilde{\psi}_0\circ (y+Y_0(y))(1+\p_{y_2}Y^2_0(y)),\\
&\p_{y_1}Y^2_0(y)=-\p_{x_1}\psi_0\circ (y+Y_0(y))(1+\p_{y_1}Y^1_0(y))-\p_{x_2}\psi_0\circ (y+Y_0(y))\p_{y_1}Y^2_0(y),\\
&\p_{y_2}Y^2_0(y)=-\p_{x_1}\psi_0\circ
(y+Y_0(y))\p_{y_2}Y^1_0(y)-\p_{x_2}\psi_0\circ
(y+Y_0(y))(1+\p_{y_2}Y^2_0(y)).
\end{aligned}\right.\eeq
Thanks to \eqref{app3cd}  and $\mathrm{det}\,U_0=1,$ we infer \beq
\label{app3bd} \left\{\begin{aligned}
&\p_{y_1}Y^1_0(y)=\p_{x_2}\psi_0\circ (y+Y_0(y)),\\
&\p_{y_2}Y^1_0(y)=\p_{x_2}\tilde{\psi}_0\circ (y+Y_0(y)),\\
&\p_{y_1}Y^2_0(y)=-\p_{x_1}\psi_0\circ (y+Y_0(y)),\\
&\p_{y_2}Y^2_0(y)=-\p_{x_1}\tilde\psi_0\circ (y+Y_0(y)),
\end{aligned}\right.\eeq
which implies \eqref{app0}. Moreover, it follows from \eqref{app3bd}
that \beno\na_x(X_0^{-1}(x)-x)=(I+\na_yY_0)^{-1}\circ
X_0^{-1}(x)-I=\begin{pmatrix}
-\p_{x_1}\tilde\psi_0&-\p_{x_2}\tilde\psi_0\\
\p_{x_1}\psi_0&\p_{x_2}\psi_0
\end{pmatrix}.\eeno
Then using Lemma \ref{funct} with $\Phi=X_0^{-1}(x)$, we deduce from
\eqref{app3bd} that for $\tau_1\in(2,\infty)$ and $\tau_2\in(-1,0)$,
\beno\begin{aligned} \|\na_yY_0\|_{\dH^{\tau_1}}&\leq
C(\|\na_x\psi_0\|_{L^\infty},\|\na_x\tilde\psi_0\|_{L^\infty})(1+\|\D_x\psi_0\|_{H^{\tau_1-2}}
+\|\D_x\tilde\psi_0\|_{H^{\tau_1-2}})\\
&\quad\times\bigl(\|\D_x\psi_0\|_{H^{\tau_1-1}}+\|\D_x\tilde\psi_0\|_{H^{\tau_1-1}}\bigr),\\
\|\na_yY_0\|_{\dH^{\tau_2+1}}&\leq
C(\|\na_x\psi_0\|_{L^\infty},\|\na_x\tilde\psi_0\|_{L^\infty})
\bigl(\|\na_x\psi_0\|_{\dH^{\tau_2+1}}+\|\na_x\tilde\psi_0\|_{\dH^{\tau_2+1}}\bigr),\\
\|\p_{y_1}Y_0\|_{\dH^{\tau_2}}&\leq
C(\|\na_x\psi_0\|_{L^\infty},\|\na_x\tilde\psi_0\|_{L^\infty})
\bigl(\|\na_x\psi_0\|_{\dH^{\tau_2}}\\
&\qquad+(\|\na_x\psi_0\|_{\dH^{\tau_2+1}}+\|\na_x\tilde\psi_0\|_{\dH^{\tau_2+1}})
\|\na_x\psi_0\|_{L^2}\bigr),
\end{aligned}\eeno
which along with Sobolev imbedding theorem  ensures \eqref{app1}.
This concludes the proof of Lemma \ref{initial}.
\end{proof}

With Lemma \ref{initial}, for $(\psi_0,\tilde\psi_0)$ given by
Theorem \ref{th1} and $U_0$ by \eqref{a2}, there exists
$Y_0=(Y_0^1,Y_0^2)^T$ so that $\na Y_0\in \dot{H}^{s_1+1}(\R^2)\cap
\dot{H}^{s_2+1}(\R^2),$  $\p_1Y_0\in\dot{H}^{s_2}(\R^2),$ and there
hold \eqref{app0} and \eqref{app1}. With $X_0(y)=y+Y_0(y)$ thus
obtained, we define the flow map $X(t,y)$ by
\beno\left\{\begin{aligned}
&\f{dX(t,y)}{dt}=\vv u(t,X(t,y)),\\
&X(t,y)|_{t=0}=X_0(y),
\end{aligned}\right.\eeno
and $Y(t,y)$ through \beq\label{a6} X(t,y)=X_0(y)+\int_0^t\vv
u(s,X(s,y))\,ds\eqdefa y+Y(t,y). \eeq Then thanks to Lemma 1.4
of \cite{Majda} and \eqref{app0}, we deduce from \eqref{a5} that
\beq\label{a8} U(t, X(t,y))=\na_y X(t,y)=I+\na_y
Y(t,y)\quad\mbox{and}\quad\mathrm{det}\, (I+\na_y Y(t,y))=1. \eeq
Denoting $ U(t, X(t,y))\eqdefa(a_{ij})_{i,j=1,2}$ and $U^{-1}(t,
X(t,y))=(I+\na_y Y(t,y))^{-1}\eqdefa(b_{ij})_{i,j=1,2}$. It is easy
to check that, as $\mathrm{det}\, U=1,$ $(b_{ij})_{i,j=1,2}=\cA_Y$
with
 \beq\label{a14a}
\mathcal{A}_Y\eqdefa\begin{pmatrix}
1+\p_{y_2}Y^2&-\p_{y_2}Y^1\\
-\p_{y_1}Y^2&1+\p_{y_1}Y^1
\end{pmatrix},
\eeq is the adjoint matrix  of $(I+\na_y Y),$ and $\sum_{i=1}^2\f{\p
b_{ij}}{\p y_i}=0$. Furthermore, let $\vv
b\eqdefa(\p_{x_2}\phi,-\p_{x_1}\phi)^T,$  it follows from \eqref{a8}
that \beq\label{a9} \vv b\circ
X(t,y)=(1+\p_{y_1}Y^1,\p_{y_1}Y^2)^T\quad\text{and}\quad\cA_Y(\vv
b\circ X)=(1,0)^T, \eeq and consequently one has \beno
\begin{split}
\bigl[-\dv_x\,(\na_x\phi&\otimes\na_x\phi)+\na_x(|\na_x\phi|^2)\bigr]\circ
X(t,y)\\
&=[\dv_x(\vv b\otimes\vv b)]\circ X(t,y)\\
&=\na_y\cdot[\cA_Y(\vv b\circ X)\otimes(\vv b\circ X)]=\p_{y_1}(\vv
b\circ X)=\p_{y_1}^2Y(t,y),
\end{split}\eeno
that is
 \beq\label{a10}
\bigl[-\mathrm{div}_x(\na_x\phi\otimes\na_x\phi)+\na_x(|\na\phi|^2)\bigr](t,
X(t,y))=\p_{y_1}^2 Y(t,y). \eeq With \eqref{a6} and \eqref{a10}, we
can  reformulate \eqref{1.1} and \eqref{a3} as
\beq\label{a14}\left\{\begin{aligned}
&Y_{tt}-\na_Y\cdot\na_Y Y_t-\p_{y_1}^2Y+\na_Yq=\vv 0,\\
&\na_Y\cdot Y_t=0,\\
&Y|_{t=0}=Y_0,\quad Y_t|_{t=0}=\vv u_0\circ X_0(y)\eqdefa Y_1,
\end{aligned}\right.\eeq
where $q(t,y)=(p+|\na\phi|^2)\circ X(t,y)$ and
$\na_Y=\mathcal{A}_Y^T\na_y$  with $\cA_Y$ being given by
\eqref{a14a}.
 Here and in
what follows, we always assume that $\|\na_y
Y\|_{L^\infty}\leq\f{1}{2}$. Under this assumption, we  rewrite
\eqref{a14} as \beq\label{a12}\left\{\begin{aligned}
&Y_{tt}-\Delta_y Y_t-\p_{y_1}^2 Y=\vv f(Y,q),\\
&\na_y\cdot Y=\r(Y),\\
&Y|_{t=0}=Y_0,\quad Y_t|_{t=0}=Y_1,
\end{aligned}\right.\eeq
where \beq\label{a13}\begin{aligned}
&\vv f(Y,q)=(\na_Y\cdot\na_Y-\Delta_y)Y_t-\na_Yq,\\
&\r(Y)=\na_y\cdot Y_0-\int_0^t(\na_Y-\na_y)\cdot Y_s
ds=\p_{y_1}Y^2\p_{y_2}Y^1-\p_{y_1}Y^1\p_{y_2}Y^2.
\end{aligned}\eeq
Here we have used the fact that $\det\,(I+\na_y Y_0)=\det\, U_0=1$
and \eqref{a14a} to derive the second equality of \eqref{a13}.
Indeed, thanks to \eqref{a14a} and $\det\,(I+\na_y Y_0)=1$, one has
\beq\label{a13qw}
\begin{split}
(\na_Y-\na_y)\cdot
Y_t=&\f{d}{dt}\bigl(\p_{y_1}Y^1\p_{y_2}Y^2-\p_{y_1}Y^2\p_{y_2}Y^1\bigr)\\
=&\f{d}{dt}\bigl(\mathrm{det}\,(I+\na_y Y)-1-\na_y\cdot
Y\bigr).
\end{split} \eeq Furthermore, \eqref{a13qw} ensures that the
equation $\na_y\cdot Y=\r(Y)$ would imply  $\mathrm{det}\,(I+\na_y
Y)=1$ and $\na_Y\cdot Y_t=0$.

 For notational
convenience, we shall neglect the subscripts $x$ or $y$ in $\p, \na$
and $\D$ in what follows. We make the convention that whenever $\na$
acts on $(\psi, \vv u, p),$ which is a solution to \eqref{1.2}, we
understand $(\na\psi, \na\vv u, \na p)$ as $(\na_x\psi, \na_x\vv u, \na_x
p).$ While $\na$ acts on $(Y,q),$ the solution to \eqref{a12}, we
understand $(\na Y, \na q)$ as $(\na_y Y, \na_y q).$ Similar
conventions for $\p$ and $\D.$

For  \eqref{a12}-\eqref{a13}, we have  the following global
wellposedness result:

\begin{thm}\label{T} {\sl Let $s_1>1$, $s_2\in(-1,-\f{1}{2})$. Let $(Y_0, Y_1)$ satisfy $(\p_1Y_0,\,\D Y_0)
\in\dot{H}^{s_1}(\R^2)\cap\dot{H}^{s_2}(\R^2)$, $Y_1\in\dot{H}^{s_1+1}(\R^2)\cap\dot{H}^{s_2}(\R^2)$ and
\beq\label{A1a}
 \mathrm{det}\,(I+\na Y_0)=1,\quad\na_{Y_0}\cdot Y_1=0, \quad\text{and}
\eeq \beq\label{A1}
\|\p_1Y_0\|_{\dot{H}^{s_2}}+\|\D
Y_0\|_{\dot{H}^{s_1}\cap\dot{H}^{s_2}}+\|Y_1\|_{\dot{H}^{s_1+1}\cap\dot{H}^{s_2}}\leq
\e_0 \eeq for some $\e_0$ sufficiently small.
 Then  \eqref{a12}-\eqref{a13} has a unique global solution $(Y,q)$ (up to a constant for $q$)
 so that \beq\label{A1wq}\begin{split}
 & Y\in C([0,\infty);\dot{H}^{s_1+2}\cap\dot{H}^{s_2+2}(\R^2))\quad\mbox{and}\quad Y^2\in C([0,\infty);\dot{H}^{s_2+1}(\R^2)),\\
 &\p_1Y\in C([0,\infty);\dot{H}^{s_2}(\R^2))\cap L^2(\R^+; \dot{H}^{s_1+1}
 \cap \dot{H}^{s_2+1}(\R^2)),\\
&Y_t\in C([0,\infty);\dot{H}^{s_1+1}\cap\dot{H}^{s_2}(\R^2))\cap
L^2(\R^+;\dot{H}^{s_1+2}\cap\dot{H}^{s_2+1}(\R^2))\cap L^1(\R^+;
Lip(\R^2)),\\
& \na q\in L^2(\R^+;\dot{H}^{s_1}\cap\dot{H}^{s_2}(\R^2))\cap
L^1(\R^+;\dot{H}^{s_1}\cap\dot{H}^{s_2}(\R^2)). \end{split} \eeq
Moreover, there hold $\mathrm{det}\,(I+\na Y)=1,\, \na_Y\cdot
Y_t=0$, and \beq\label{M1}\begin{aligned}
&\|Y\|_{L^\infty(\R^+;\dot{H}^{s_1+2}\cap\dot{H}^{s_2+2})}^2
+\|\p_1Y\|_{L^\infty(\R^+;\dot{H}^{s_2})}^2+\|Y^2\|_{L^\infty(\R^+;\dot{H}^{s_2+1})}^2\\
 &\quad+\|Y_t\|_{L^\infty(\R^+;\dot{H}^{s_1+1}\cap\dot{H}^{s_2})}^2+\|\p_1Y\|_{L^2(\R^+;\dot{H}^{s_1+1}\cap\dot{H}^{s_2+1})}^2
 +\|Y_t\|_{L^2(\R^+;\dot{H}^{s_1+2}\cap\dot{H}^{s_2+1})}^2
\\
&\quad +\|\na Y_t\|_{L^1(\R^+;L^\infty)}^2+\|\na q\|_{L^2(\R^+;\dot{H}^{s_1}\cap\dot{H}^{s_2})}^2+\|\na q\|_{L^1(\R^+;\dot{H}^{s_1}\cap\dot{H}^{s_2})}^2\\
&\leq C\bigl(\|\p_1Y_0\|_{\dot{H}^{s_2}}^2+\|\D
Y_0\|_{\dot{H}^{s_1}\cap\dot{H}^{s_2}}^2+\|Y_1\|_{\dot{H}^{s_1+1}\cap\dot{H}^{s_2}}^2\bigr).
\end{aligned}\eeq
}
\end{thm}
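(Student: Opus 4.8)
The plan is to prove Theorem~\ref{T} by a continuation argument built on the a priori estimate~\eqref{M1}. Local-in-time existence and uniqueness of a solution with the regularity of~\eqref{A1wq} is routine: one inverts the linear constant-coefficient operator $\p_t^2-\D\p_t-\p_1^2$ (parabolic in $Y_t$, with the degenerate symbol $|\xi|^2\lambda+\xi_1^2$ for $Y$) by Fourier methods, recovers the pressure from an elliptic problem (next paragraph), and runs a fixed point on a short interval; the smallness of $\na Y_0$ keeps $\|\na Y\|_{L^\infty}\le\f12$ there, so~\eqref{a12}--\eqref{a13} is a legitimate rewriting of~\eqref{a14}. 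It therefore suffices to show that on any interval $[0,T]$ carrying such a solution with $\|\na Y\|_{L^\infty([0,T]\times\R^2)}\le\f12$, the left-hand side of~\eqref{M1} is at most $C$ times the right-hand side with $C$ independent of $T$; shrinking $\e_0$ then keeps that quantity below $\f14$, so the Lipschitz bound never saturates and the solution is global.

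\emph{Pressure.} Applying $\na_Y\cdot$ to the momentum equation in~\eqref{a14}, using $\na_Y\cdot Y_t=0$, $\det(I+\na Y)=1$, and the divergence-free columns of $\cA_Y$ (whence $\na_Y\cdot\na_Y g=\dive(\cA_Y\cA_Y^T\na g)$), one obtains
\[
\dive\bigl(\cA_Y\cA_Y^T\na q\bigr)=\cR(Y,Y_t),
\]
with $\cA_Y\cA_Y^T=I+O(\na Y)$ symmetric and uniformly near the identity on $[0,T]$, and $\cR$ a sum of terms each at least quadratic in $(\na Y,\na^2 Y,\na Y_t,\na^2 Y_t)$. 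Inverting this perturbed Laplacian and applying homogeneous product laws in $\dH^{s_1}\cap\dH^{s_2}$ --- where the range $s_2\in(-1,-\f12)$ is exactly what keeps the low frequencies under control --- yields the asserted bounds on $\|\na q\|_{L^2(\R^+;\dH^{s_1}\cap\dH^{s_2})}$ and $\|\na q\|_{L^1(\R^+;\dH^{s_1}\cap\dH^{s_2})}$ in terms of $\e_0$ times the $Y$-energy. So $q$ is, throughout, a quadratic lower-order term.

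\emph{Energy estimates.} For large $|\xi|$ the characteristic exponents of $\p_t^2-\D\p_t-\p_1^2$ are $\sim-\xi_1^2/|\xi|^2$ and $\sim-|\xi|^2$: parabolic smoothing of $Y_t$, but only direction-$y_1$ decay for $Y$ (this is why $\p_1Y$ enters one derivative better, and why $Y^2$, which gains a factor $\xi_1/|\xi|$ from the near-divergence-free constraint $\na\cdot Y=\r(Y)$, enters in $\dH^{s_2+1}$). I would therefore combine, at each relevant order $\sigma\in\{s_1,s_2\}$, the natural energy identity (test $\Lambda^{2\sigma}$ of the equation against $Y_t$), which alone dissipates only $\|\Lambda^\sigma\na Y_t\|_{L^2}^2$, with a cross identity $\f{d}{dt}(\Lambda^\sigma Y_t\mid\Lambda^\sigma Y)$, whose $-\|\Lambda^\sigma\p_1Y\|_{L^2}^2$ term --- at the modest price of $\|\Lambda^\sigma Y_t\|_{L^2}^2$, absorbed by the parabolic term --- supplies the missing dissipation of $\p_1Y$. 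Together with the companion estimates for $\p_1Y$ in $\dH^{s_2}$ and $Y^2$ in $\dH^{s_2+1}$, this produces a Lyapunov functional equivalent to the $L^\infty_t$ part of~\eqref{M1} with dissipation dominating the $L^2_t$ parts. All forcing terms --- $\vv f(Y,q)=(\na_Y\cdot\na_Y-\D)Y_t-\na_Yq$ and $\r(Y)=\p_1Y^2\p_2Y^1-\p_1Y^1\p_2Y^2$ --- are products of two factors at least one of which carries $\na Y$, so Sobolev product estimates and the smallness of $\e_0$ absorb them into the dissipation; the explicit form~\eqref{a14a} of $\cA_Y$, in particular that $\na_Y-\na_y$ is genuinely quadratic and $\r(Y)$ is a null-type quadratic form, is the ``crucial algebraic structure'' of~\eqref{a14}.

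\emph{The anisotropic $L^1(\R^+;Lip)$ bound and conclusion.} The remaining --- and hardest --- ingredient is $\|\na Y_t\|_{L^1(\R^+;L^\infty)}\lesssim\e_0$, which I would isolate as a separate proposition proved by anisotropic Littlewood--Paley analysis. The isotropic $\dH^\sigma$ dissipation above cannot by itself reach $L^1_tL^\infty$: the slow mode decays only like $e^{-t\xi_1^2/|\xi|^2}$, so summability in time costs a power of the horizontal frequency $\xi_1$. Decomposing into dyadic blocks adapted separately to $\xi_1$ and to $\xi_2$ and running Duhamel block by block --- feeding in the $L^2_t$ dissipation and source bounds from the two previous steps and summing in the anisotropic spaces $\cB^{s_1,s_2}$ of Definition~\ref{def2} --- yields $Y_t\in L^1(\R^+;Lip(\R^2))$. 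This simultaneously closes the bootstrap (the Lipschitz norm controls the transport-type nonlinearities hidden in $\vv f$ and $\r$ and preserves $\|\na Y\|_{L^\infty}\le\f12$) and, via the equation, upgrades the time-continuity statements in~\eqref{A1wq}; uniqueness follows from an energy estimate on the difference of two solutions in a weaker norm, and once $\na\cdot Y=\r(Y)$ is propagated, identity~\eqref{a13qw} gives $\det(I+\na Y)=1$ and $\na_Y\cdot Y_t=0$ for all time. I expect essentially all the difficulty to concentrate in this anisotropic $L^1_tLip$ estimate, since that is where the degeneracy of the dissipation --- the absence of any $\p_2^2Y$ term --- must be compensated using the full structure of~\eqref{a14}.
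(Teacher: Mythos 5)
Your overall architecture coincides with the paper's: the pressure is recovered from the perturbed elliptic equation obtained by applying $\na_Y\cdot$ to the momentum equation, a twisted energy functional supplies the hidden dissipation of $\p_1Y$, an anisotropic Littlewood--Paley analysis driven by the two eigenvalues $\la_\pm(\xi)$ gives the $L^1(\R^+;Lip)$ bound on $Y_t$, and a continuation argument closes everything. Two points, however, would fail as written. First, your cross identity is taken at the wrong level: testing the equation against $\Lambda^{2\sigma}Y$ produces the dissipation $\|\Lambda^\sigma\p_1Y\|_{L^2}^2$ at the price of $-\|\Lambda^\sigma Y_t\|_{L^2}^2$, and this price is \emph{not} absorbed by the parabolic term $\|\Lambda^\sigma\na Y_t\|_{L^2}^2$ at low frequencies; nor is the cross term $(\Lambda^\sigma Y_t\,|\,\Lambda^\sigma Y)$ dominated by the quantities in your Lyapunov functional. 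The paper's multiplier is $\Delta_jY_t-\f14\Delta\Delta_jY-\Delta\Delta_jY_t$: pairing $Y_t$ with $\Delta Y$ rather than with $Y$ makes the cost exactly $-\f14\|\na\Delta_jY_t\|_{L^2}^2$, absorbed by the genuine dissipation at the \emph{same} order, and the dissipation gained is $\|\p_1\na\Delta_jY\|_{L^2}^2$, i.e.\ $\p_1Y\in L^2_T(\dH^{\sigma+1})$ --- consistent with \eqref{M1}, which claims no $L^2_t$ control of $\p_1Y$ at the base regularity. Your version becomes correct only after shifting the cross identity up by one derivative, which is the same thing.

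Second, ``Sobolev product estimates and smallness absorb the forcing into the dissipation'' understates the real obstruction: the component $\p_2Y^1$ carries \emph{no} time-integrated dissipation at all (only $\p_1Y$, $Y^2$ and $Y_t$ do), so a term quadratic in $\p_2Y^1$ --- for instance $\p_2\bigl((1+\p_1Y^1)\p_2Y^1\p_1Y_t^1\bigr)$ tested against $\Delta\Delta_jY^1$ --- cannot be closed by placing a $\na Y$ factor in an $L^1_t$ or $L^2_t$ norm. The paper handles it (Lemma \ref{lem5.5}) by integration by parts and a paraproduct decomposition that puts $\p_1Y_t^1$ in $L^1_T(L^\infty)$ and both copies of $\p_2Y^1$ in $L^\infty_T$-based norms; this is exactly where the anisotropic $L^1(\R^+;Lip)$ estimate enters the energy estimate itself, not merely the preservation of $\|\na Y\|_{L^\infty}\le\f12$. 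With these two repairs your scheme is essentially the paper's proof.
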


\begin{rmk} As for the Eulerian formulation \eqref{1.2},
 it is also technical to explore the delicate
mechanism of  partial dissipations in \eqref{a12}. We overcome this
difficulty by applying the anisotropic Littlewood-Paley theory.
Since we use the Lagrangian formulation \eqref{a12} instead of
Eulerian one \eqref{1.2}, we can avoid the difficulty
concerning the propagation anisotropic regularity for the transport
equation, which we encountered in \cite{LZ}.  In general,
 it is interesting to study how the anisotropic
 Littlewood-Paley theory can be applied to these evolution equations
 with degenerations of certain ellipticity (parabolicity) in phase
 variables.
\end{rmk}

\begin{rmk}
We note once again that the equation $\na\cdot
Y=\r(Y)=\p_1Y^2\p_2Y^1-\p_1Y^1\p_2Y^2$, which implies the
incompressibility condition $\det\,(I+\na Y)=1$, plays a key role in
the proof of Theorem \ref{T}. In particular, this equation ensures
the global in time $L^1$-estimates of $\na q$ and $\na Y_t$, which
are crucial in order
 to close the energy estimates for \eqref{a12}-\eqref{a13}. Furthremore,
 this equation provides  better estimates for the component $Y^2$ than that of
$Y^1.$
\end{rmk}

\no{\bf Scheme of the proofs.}\\

To avoid the difficulty caused by propagating  anisotropic
regularity for the free transport equation, we shall first prove
Theorem \ref{T}, which concerns the global wellposedness in the
Lagrangian formulation \eqref{a12}-\eqref{a13} for the coupled
system \eqref{1.1} and \eqref{a3}.

Indeed let $(Y,q)$ be a  smooth enough solution of \eqref{a12},
applying standard energy estimate to \eqref{a12} gives rise to \beq
\label{1.12ab}
\begin{split}
&\f{d}{dt}\Bigl\{\f{1}{2}\bigl(\|Y_t\|_{\dot{H}^s}^2+\|Y_t\|_{\dot{H}^{s+1}}^2+\|\p_1Y\|_{\dot{H}^s}^2
+\|\p_1Y\|_{\dot{H}^{s+1}}^2+\f{1}{4}\|Y\|_{\dot{H}^{s+2}}^2\bigr)
-\f{1}{4}(Y_t\ |\ \Delta Y)_{\dot{H}^s}\Bigr\}\\
&\quad+\f{3}{4}\|Y_t\|_{\dot{H}^{s+1}}^2+\|Y_t\|_{\dot{H}^{s+2}}^2+\f{1}{4}\|\p_1Y\|_{\dot{H}^{s+1}}^2\\
&=\bigl(\vv f\ |\ Y_t-\f{1}{4}\Delta Y-\D Y_t\bigr)_{\dot{H}^s}.
\end{split}
\eeq where $\vv f$ is given by \eqref{a13} and $(a\ |\
b)_{\dot{H}^s}$ denotes the standard $\dot{H}^s$ inner product of
$a$ and $b.$ \eqref{1.12ab} shows that $\p_1 Y$ belongs to
$L^2(\R^+;\dot{H}^{s+1}(\R^2)).$ After a careful check, to close the
energy estimate \eqref{1.12ab},  we  need also the
$L^1(\R^+;Lip(\R^2))$ estimate of $Y_t.$ Toward this, we investigate
first the spectrum properties to the following linearized system of
\eqref{a12}-\eqref{a13}:
 \beq\label{a12ag}\left\{\begin{aligned}
&Y_{tt}-\Delta Y_t-\p_1^2 Y=\vv f,\\
&Y|_{t=0}=Y_0,\quad Y_t|_{t=0}=Y_1.
\end{aligned}\right.\eeq
Simple calculation shows that the symbolic equation of \eqref{a12ag}
has eigenvalues $\la_\pm(\xi)$ given by \eqref{5.2} and they satisfy
\eqref{5.2ad}. This shows that smooth solution of \eqref{a12ag}
decays in a very subtle way. In order to capture this delicate decay
property for the solutions of \eqref{a12ag}, we will have to
decompose our frequency analysis into two parts: $\bigl\{
\xi=(\xi_1,\xi_2):\ |\xi|^2\leq 2|\xi_1|\ \bigr\}$ and $\bigl\{
\xi=(\xi_1,\xi_2):\ |\xi|^2> 2|\xi_1|\ \bigr\}.$ It suggests us to
use anisotropic Littlewood-Paley analysis  to obtain the
$L^1(\R^+,Lip(\R^2))$ estimate of $Y_t.$

With Theorem \ref{T}, we can prove Theorem \ref{th1} through
coordinate transformation, namely from Lagrangian coordinates to
Eulerian ones. Finally thanks to Lemma \ref{lemf1}, for $s>2$ and
 $\na\psi_0\in H^s(\R^2),$  there exists
 $\wt{\psi}_0$ so that there hold \eqref{a2} and \eqref{f2qw}. We
 thus obtained $(\psi_0,\wt{\psi}_0)$ and the initial velocity field
 $\vv u_0$
 given by Theorem \ref{th2}, we infer from Theorem \ref{th1} that
 the coupled system between \eqref{1.1} and \eqref{a3} has a unique
 solution $(\phi, \wt{\phi},\vv u, p),$ which satisfies
 \eqref{th1wq} and \eqref{th1wr}. In particular, $(\phi, \vv u, p)$
 solves \eqref{1.1} and we complete the proof of Theorem \ref{th2}.

The rest of the paper is organized as follows. In the first part of
Section \ref{sect3}, we shall present a heuristic analysis to the
linearized system of \eqref{a12}-\eqref{a13}, which motivates us to
use anisotropic Littlewood-Paley theory below,  then we shall
collect some basic facts on Littlewood-Paley analysis in Subsection
3.2. In Section 4, we apply anisotropic Littlewood-Paley theory to
explore the dissipative mechanism for  a linearized model of
\eqref{a12}-\eqref{a13}. In Section 5, we present the proof of
Theorem \ref{T} and then  Theorems \ref{th1} and \ref{th2} in
Section 6. Finally, we present the proofs of some technical lemmas
in the Appendices.

\medskip
 \setcounter{equation}{0}
\section{Preliminary}\label{sect3}
\subsection{Spectral analysis to the linearized system of \eqref{a12} }\label{subsect3.1} Before dealing with the full system \eqref{a12}-\eqref{a13},
 we shall make some heuristic
analysis to the  linearized system \eqref{a12ag}. Observe that the
symbolic equation of \eqref{a12ag} reads \beno
\la^2+|\xi|^2\la+\xi_1^2=0\quad\mbox{for}\quad \xi=(\xi_1,\xi_2).
\eeno It is easy to calculate that this equation has two different
eigenvalues \beq  \la_\pm =-\f{|\xi|^2\pm
\sqrt{|\xi|^4-4\xi_1^2}}{2}. \label{5.2} \eeq The Fourier modes
corresponding to $\la_+$ decays like $e^{-t|\xi|^2}$.  Whereas the
decay property of the Fourier modes corresponding to $\la_-$  varies
with directions of $\xi$ as \beq\label{5.2ad}
\la_-(\xi)=-\f{2\xi_1^2}{|\xi|^2\bigl(1+\sqrt{1-\f{4\xi_1^2}{|\xi|^4}}\bigr)}
\to -1\quad \mbox{as}\quad |\xi|\to \infty \eeq only in the $\xi_1$
direction. This simple  analysis shows that the dissipative
properties of the solutions to \eqref{a12ag} may be more complicated
than that for the linearized system of isentropic compressible
Navier-Stokes system in \cite{Da}. It  also suggests us to employ
the tool of anisotropic Littlewood-Paley theory, which has been used
in the study of the global wellposedness to 3-D anisotropic
incompressible Navier-Stokes equations \cite{CDGG,CZ,GZ2, DI,
Pa02,PZ1,Zhangt2}, and in \cite{LZ} to explore the dissipative
properties to the three-dimensional case of
 \eqref{1.2}. One may check Section \ref{sect4} below for the detailed rigorous analysis
corresponding to this scenario.

\subsection{Littlewood-Paley theory} The
proof  of Theorem \ref{th1} requires a dyadic decomposition of the
Fourier variables, or the Littlewood-Paley decomposition. For the
convenience of the readers, we recall some basic facts on
Littlewood-Paley theory from \cite{bcd}.  Let $\varphi$ and $\chi$
be smooth functions supported in $\mathcal{C}\eqdefa \{
\tau\in\R^+,\ \frac{3}{4}\leq\tau\leq\frac{8}{3}\}$ and $\cB\eqdefa
\{ \tau\in\R^+,\ \tau\leq\frac{4}{3}\}$ such that
\begin{equation*}
 \sum_{j\in\Z}\varphi(2^{-j}\tau)=1 \quad\hbox{for}\quad \tau>0\quad\mbox{and}\quad  \chi(\tau)+ \sum_{j\geq
0}\varphi(2^{-j}\tau)=1.
\end{equation*}
For $a\in{\mathcal S}'(\R^2),$ we set \beq
\begin{split}
&\Delta_k^ha\eqdefa\cF^{-1}(\varphi(2^{-k}|\xi_1|)\widehat{a}),\qquad
S^h_ka\eqdefa\cF^{-1}(\chi(2^{-k}|\xi_1|)\widehat{a}),
\\
& \Delta_\ell^va
\eqdefa\cF^{-1}(\varphi(2^{-\ell}|\xi_2|)\widehat{a}),\qquad \
S^v_\ell a \eqdefa \cF^{-1}(\chi(2^{-\ell}|\xi_2|)\widehat{a}),
 \quad\mbox{and}\\
&\Delta_ja\eqdefa\cF^{-1}(\varphi(2^{-j}|\xi|)\widehat{a}),
 \qquad\ \ \
S_ja\eqdefa \cF^{-1}(\chi(2^{-j}|\xi|)\widehat{a}), \end{split}
\label{1.0}\eeq where $\cF a$ and $\widehat{a}$ denote the Fourier
transform of the distribution  $a.$ The dyadic operators defined in
\eqref{1.0} satisfy the property of almost orthogonality:
\begin{equation}\label{Pres_orth}
\Delta_k\Delta_j a\equiv 0 \quad\mbox{if}\quad| k-j|\geq 2
\quad\mbox{and}\quad \Delta_k( S_{j-1}a \Delta_j b) \equiv
0\quad\mbox{if}\quad| k-j|\geq 5.
\end{equation}
Similar properties hold for $\D_k^h$ and $\D_\ell^v.$

In what follows, we shall frequently use the following anisotropic
type Bernstein inequalities:

\begin{lem}\label{le2.1} {\sl Let $\cB_{h}$ (resp.~$\cB_{v}$) a ball
of~$\R_{h}$ (resp.~$\R_{v}$), and~$\cC_{h}$ (resp.~$\cC_{v}$) a ring
of~$\R_{h}$ (resp.~$\R_{v}$); let~$1\leq p_2\leq p_1\leq \infty$ and
~$1\leq q_2\leq q_1\leq \infty.$ Then there holds:
\smallbreak\noindent If the support of~$\wh a$ is included
in~$2^k\cB_{h}$, then
\[
\|\partial_{x_1}^\alpha a\|_{L^{p_1}_h(L^{q_1}_v)} \lesssim
2^{k\left(\al+\left(\frac1{p_2}-\frac1{p_1}\right)\right)}
\|a\|_{L^{p_2}_h(L^{q_1}_v)}.
\]
If the support of~$\wh a$ is included in~$2^\ell\cB_{v}$, then
\[
\|\partial_{x_2}^\beta a\|_{L^{p_1}_h(L^{q_1}_v)} \lesssim
2^{\ell(\beta+(\frac1{q_2}-\frac1{q_1}))} \|
a\|_{L^{p_1}_h(L^{q_2}_v)}.
\]
If the support of~$\wh a$ is included in~$2^k\cC_{h}$, then
\[
\|a\|_{L^{p_1}_h(L^{q_1}_v)} \lesssim 2^{-kN} \|\partial_{x_1}^N
a\|_{L^{p_1}_h(L^{q_1}_v)}.
\]
If the support of~$\wh a$ is included in~$2^\ell\cC_{v}$, then
\[
\|a\|_{L^{p_1}_h(L^{q_1}_v)} \lesssim 2^{-\ell N} \|\partial_{x_2}^N
a\|_{L^{p_1}_h(L^{q_1}_v)}.
\]}
\end{lem}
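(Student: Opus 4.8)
The plan is to reduce the anisotropic statements to the classical one-dimensional Bernstein inequalities by freezing one variable at a time, exploiting the tensor-product structure of the frequency localization. First I would recall that if $\widehat a$ is supported in $2^k\cB_h$ as a function of $\xi_1$ (uniformly in $\xi_2$), then for almost every fixed $x_2$ the slice $x_1\mapsto a(x_1,x_2)$ has Fourier transform supported in $2^k\cB_h$; applying the classical 1-D Bernstein inequality in the $x_1$ variable gives, for each such $x_2$,
\[
\|\partial_{x_1}^\alpha a(\cdot,x_2)\|_{L^{p_1}_{x_1}}\lesssim 2^{k(\alpha+(1/p_2-1/p_1))}\|a(\cdot,x_2)\|_{L^{p_2}_{x_1}},
\]
with a constant independent of $x_2$. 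Then I would take the $L^{q_1}_v$ norm in $x_2$ of both sides. On the left this is exactly $\|\partial_{x_1}^\alpha a\|_{L^{p_1}_h(L^{q_1}_v)}$. On the right one needs to commute the $L^{q_1}_{x_2}$ norm past the $L^{p_2}_{x_1}$ norm: since $p_2\le p_1$ but the inner norm on the right is $L^{p_2}_{x_1}$ while the target mixed norm has $L^{q_1}_v$ on the outside, this is legitimate because we are simply integrating (or taking sup) in $x_2$ of a pointwise inequality — no Minkowski exchange in the unfavourable direction is required here, the $x_2$ norm stays outermost throughout. This yields the first inequality. The second inequality is proved symmetrically, freezing $x_1$ and applying the 1-D Bernstein inequality in the $x_2$ variable, then taking $L^{p_1}_h$ in $x_1$.

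For the two "reverse" inequalities (support of $\widehat a$ in a \emph{ring} $2^k\cC_h$, resp. $2^\ell\cC_v$), the argument is the same reduction: on a ring $2^k\cC_h$ in $\xi_1$, one writes $\widehat a(\xi)=\varphi(2^{-k}|\xi_1|)\widehat a(\xi)$ with $\varphi$ supported away from the origin, so one can form a smooth multiplier $\theta(\xi_1)$ equal to $(i\xi_1)^{-N}$ on the support of $\varphi(2^{-k}|\xi_1|)$, scaled appropriately, giving $a=2^{-kN}\,m_k(D_{x_1})\partial_{x_1}^N a$ where $m_k(D_{x_1})$ is convolution in $x_1$ with an $L^1$ kernel of norm $O(1)$ uniformly in $k$; Young's inequality in $x_1$ (again at fixed $x_2$, then taking $L^{q_1}_v$ outside) gives $\|a\|_{L^{p_1}_h(L^{q_1}_v)}\lesssim 2^{-kN}\|\partial_{x_1}^N a\|_{L^{p_1}_h(L^{q_1}_v)}$. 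The $\xi_2$-ring case is identical with the roles of the variables exchanged.

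The only genuine point requiring care — and the one I would flag as the "main obstacle," though it is mild — is making sure that at every stage the variable whose norm sits on the \emph{outside} of the mixed norm is never moved inside: all the 1-D estimates are applied to slices at a fixed value of the outer variable, and only afterwards is the outer norm applied. This is what makes the proof work without any loss, and it is exactly the reason the anisotropic inequalities hold with the stated (sharp) exponents. One should also note that the support hypotheses are to be read "uniformly in the other frequency variable," i.e. $\widehat a(\xi_1,\xi_2)$ vanishes for $|\xi_1|\notin 2^k\cB_h$ regardless of $\xi_2$; this is precisely the situation produced by the operators $\Delta^h_k$ and $\Delta^v_\ell$ defined in \eqref{1.0}, so the lemma applies directly to the dyadic blocks used in the sequel. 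Passing from Schwartz functions to general tempered distributions in the relevant spaces is standard by density. This completes the proof.
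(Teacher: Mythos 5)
The paper does not prove this lemma at all -- it is quoted as a standard fact (from the references on anisotropic Littlewood--Paley theory, e.g.\ \cite{bcd}) -- so there is no internal proof to compare against; I can only assess your argument on its own terms. Your overall strategy (exploit the tensor structure of the frequency cut-off to reduce to one-dimensional Bernstein/multiplier estimates) is the right one, but there is a concrete error in the way you handle the mixed norm, and it occurs exactly at the point you flag as the ``only genuine point requiring care.'' Under the paper's convention, $L^{p_1}_h(L^{q_1}_v)=L^{p_1}(\R_{x_1};L^{q_1}(\R_{x_2}))$, i.e.\ the \emph{horizontal} norm is the \emph{outer} one. Your first argument freezes $x_2$, applies 1-D Bernstein in $x_1$, and then takes the $L^{q_1}_{x_2}$ norm of the resulting scalar inequality; what this produces is a bound on $\bigl\|\,\|\partial_{x_1}^\alpha a(\cdot,x_2)\|_{L^{p_1}_{x_1}}\bigr\|_{L^{q_1}_{x_2}}$ in terms of $\bigl\|\,\|a(\cdot,x_2)\|_{L^{p_2}_{x_1}}\bigr\|_{L^{q_1}_{x_2}}$, i.e.\ an estimate in $L^{q_1}_v(L^{p_1}_h)$, not in $L^{p_1}_h(L^{q_1}_v)$. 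Your claim that ``the $x_2$ norm stays outermost throughout'' is therefore based on a misreading of the notation: in the target norm the $x_2$ norm is innermost. Since the lemma assumes no order relation between $p_1,p_2$ and $q_1$, Minkowski's inequality cannot in general convert one order of integration into the other (it would require $p_2\le q_1\le p_1$), so the argument as written does not prove the stated inequality. The same issue recurs in your treatment of the horizontal ring case.

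The repair is standard but genuinely different from slicing in $x_2$: write $\partial_{x_1}^\alpha a=(\partial_{x_1}^\alpha\check\phi_k)\ast_{x_1}a$ with $\phi_k(\xi_1)=\phi(2^{-k}\xi_1)$ equal to $1$ on $2^k\cB_h$, regard $x_1\mapsto a(x_1,\cdot)$ as an $L^{q_1}_{x_2}$-valued function, and apply Young's inequality for the convolution in $x_1$ with values in the Banach space $L^{q_1}_{x_2}$ (equivalently, use Minkowski's integral inequality to pull the inner $x_2$-norm through the $x_1$-convolution, which is always in the favourable direction), together with $\|\partial_{x_1}^\alpha\check\phi_k\|_{L^r_{x_1}}\sim 2^{k(\alpha+1-1/r)}$ and $1+1/p_1=1/r+1/p_2$. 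Note the asymmetry you glossed over: the \emph{vertical} inequalities really can be proved slice-by-slice (freeze $x_1$, apply 1-D Bernstein in $x_2$, then take $L^{p_1}_{x_1}$ outside), because there the convolution variable carries the inner norm; it is only the horizontal inequalities, where the convolution variable carries the outer norm, that force the vector-valued Young/Minkowski formulation. The ring cases are then handled exactly as you describe, with the multiplier $m_k(\xi_1)$ having an $x_1$-kernel of $L^1$ norm $O(1)$ uniformly in $k$, but again estimated via vector-valued Young rather than by slicing in $x_2$.
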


\begin{defi}\label{def1}
{\sl  Let  $(p,r)\in[1,+\infty]^2$, $s\in\R$ and $u\in{\mathcal S}'(\R^2),$ we set
$$
\|u\|_{\dot{B}^s_{p,r}}\eqdefa\bigl(2^{qs}\|\Delta_qu\|_{L^p}\bigr)_{\ell^r}.
$$
\begin{itemize}
\item For $s<\f{2}{p}$ (or $s=\f{2}{p}$ if $r=1$), we define $\dot{B}_{p,r}^s(\R^2)\eqdefa\{u\in\cS'(\R^2)\,|\, \|u\|_{\dot{B}^s_{p,r}}<\infty \}$.
\item If $k\in\N$ and $\f{2}{p}+k-1\leq s<\f{2}{p}+k$ (or $s=\f{2}{p}+k$ if $r=1$), then $\dot{B}_{p,r}^s(\R^2)$
is defined as the subset of distribution $u\in\cS'(\R^2)$ such that $\p^\b u\in\dot{B}_{p,r}^{s-k}(\R^2)$ whenever $|\b|=k$.
\end{itemize}
}
\end{defi}

\begin{rmk}\label{rmk1}
(1) It is easy to observe that $\dot{B}_{2,2}^s(\R^2)=\dot{H}^s(\R^2)$.

(2) Let  $(p,r)\in[1,+\infty]^2$, $s\in\R$ and $u\in{\mathcal S}'(\R^2)$. Then $u\in\dot{B}_{p,r}^s(\R^2)$  if and only if there exists $\{c_{j,r}\}_{j\in\Z}$ such that $\|c_{j,r}\|_{\ell^r}=1$ and
\beno
\|\D_ju\|_{L^p}\leq Cc_{j,r}2^{-js}\|u\|_{\dot{B}^s_{p,r}} \quad\text{for all}\quad j\in\Z.
\eeno

(3) Let $s, s_1, s_2\in\R$ with $s_1<s<s_2$ and
$u\in\dot{H}^{s_1}(\R^2)\cap\dot{H}^{s_2}(\R^2)$. Then $u\in
\dot{B}^s_{2,1}(\R^2)$, and  there holds \beq\label{bb0}
\|u\|_{\dot{B}^s_{2,1}}\lesssim
\|u\|_{\dot{H}^{s_1}}^{\f{s_2-s}{s_2-s_1}}\|u\|_{\dot{H}^{s_2}}^{\f{s-s_1}{s_2-s_1}}\lesssim\|u\|_{\dot{H}^{s_1}}+\|u\|_{\dot{H}^{s_2}}.
\eeq
\end{rmk}

To derive the $L^1(\R^+; Lip(\R^2))$ estimate of $Y_t$ determined by
\eqref{a12ag}, we need the following two-dimensional version of the
anisotropic Besov type space introduced in \cite{LZ}:

\begin{defi}\label{def2}
{\sl  Let  $s_1,s_2\in\R$ and $u\in{\mathcal S}'(\R^2),$ we define
the norm
$$
\|u\|_{\cB^{s_1,s_2}}\eqdefa\sum_{j,k\in\Z^2}2^{js_1}2^{ks_2}\|\Delta_j\D_k^h
u\|_{L^{2}}.
$$
}
\end{defi}

Then motivated by the proof of Lemma 2.1 in \cite{LZ}, we have the
following improved version:

\begin{lem}\label{L1}
Let $s_1,s_2,\tau_1,\tau_2\in\R,$  which satisfy
$s_1<\tau_1+\tau_2<s_2$ and $\tau_2>0.$ Then for
$a\in\dot{H}^{s_1}(\R^2)\cap\dot{H}^{s_2}(\R^2)$,
$a\in\cB^{\tau_1,\tau_2}(\R^2)$ and there holds \beno
\|a\|_{\cB^{\tau_1,\tau_2}}\lesssim\|a\|_{\dot{B}^{\tau_1+\tau_2}_{2,1}}\lesssim\|a\|_{\dot{H}^{s_1}}+\|a\|_{\dot{H}^{s_2}}.
\eeno
\end{lem}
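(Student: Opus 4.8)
The plan is to establish the two inequalities separately. The second inequality, $\|a\|_{\dot{B}^{\tau_1+\tau_2}_{2,1}}\lesssim\|a\|_{\dot{H}^{s_1}}+\|a\|_{\dot{H}^{s_2}}$, is exactly \eqref{bb0} of Remark \ref{rmk1}(3) with $s=\tau_1+\tau_2$, since the hypothesis $s_1<\tau_1+\tau_2<s_2$ is precisely what is needed there; so nothing new is required for it. The heart of the matter is the first inequality $\|a\|_{\cB^{\tau_1,\tau_2}}\lesssim\|a\|_{\dot{B}^{\tau_1+\tau_2}_{2,1}}$, which compares the anisotropic norm (a double dyadic sum over isotropic blocks $\Delta_j$ and horizontal blocks $\Delta_k^h$) against the isotropic Besov norm.

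The key observation is a support/commutation fact about the two families of truncations. First I would note that $\Delta_j\Delta_k^h a$ is nonzero only when the horizontal ring $2^k\mathcal{C}_h$ meets the annulus $|\xi|\sim 2^j$ in frequency, which forces $2^k\lesssim 2^j$, i.e. $k\leq j+N_0$ for a fixed $N_0$. Conversely, for fixed $j$ one has $\sum_{k}\Delta_k^h(\Delta_j a)=\Delta_j a$ up to the low-frequency remainder, and more usefully $\|\Delta_k^h \Delta_j a\|_{L^2}\lesssim\|\Delta_j a\|_{L^2}$ uniformly in $k$ by boundedness of the horizontal Fourier multiplier on $L^2$. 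Then, splitting
\[
\|a\|_{\cB^{\tau_1,\tau_2}}=\sum_{j\in\Z}\sum_{k\leq j+N_0}2^{j\tau_1}2^{k\tau_2}\|\Delta_j\Delta_k^h a\|_{L^2},
\]
I would use the uniform bound $\|\Delta_j\Delta_k^h a\|_{L^2}\lesssim\|\Delta_j a\|_{L^2}$ and, crucially, the hypothesis $\tau_2>0$, which makes the geometric sum $\sum_{k\leq j+N_0}2^{k\tau_2}$ converge and equal to $C 2^{j\tau_2}$. This collapses the double sum to $\sum_j 2^{j(\tau_1+\tau_2)}\|\Delta_j a\|_{L^2}=\|a\|_{\dot{B}^{\tau_1+\tau_2}_{2,1}}$, as desired. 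One small point: when $a\in\dot H^{s_1}\cap\dot H^{s_2}$ with $s_1<\tau_1+\tau_2<s_2$, $a$ is genuinely a tempered distribution whose low-frequency part is controlled, so the low-frequency remainder of the horizontal decomposition causes no trouble — one can either absorb it into the $k$-sum directly or invoke Remark \ref{rmk1}(3) to know $a\in\dot B^{\tau_1+\tau_2}_{2,1}$ a priori and work with the norm.

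The main obstacle, such as it is, is bookkeeping rather than depth: one must be careful that the horizontal blocks $\Delta_k^h$ live on $|\xi_1|$ while $\Delta_j$ lives on $|\xi|$, so the nesting of supports only goes one way ($2^k\lesssim 2^j$, never the reverse), and it is exactly this one-sidedness together with $\tau_2>0$ that makes the sum over $k$ finite; if $\tau_2$ were negative the argument would fail and one would instead need to sum over $j$ first. I would also double-check the edge case where $\Delta_k^h a$ picks up contributions from a wide range of $|\xi_2|$ for a narrow range of $|\xi_1|$: this is precisely why one truncates with $\Delta_j$ first (fixing $|\xi|\sim 2^j$, hence $|\xi_2|\lesssim 2^j$ too) and only then estimates $\|\Delta_k^h\Delta_j a\|_{L^2}$, so no vertical decay is needed. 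This mirrors the proof of Lemma 2.1 in \cite{LZ}, the improvement here being that one only needs $a$ in two homogeneous Sobolev spaces bracketing $\tau_1+\tau_2$, rather than in a space adapted to the anisotropy.
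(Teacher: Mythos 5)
Your proposal is correct and follows essentially the same route as the paper: the support constraint $k\leq j+N_0$ in $\Delta_j\Delta_k^h$, the uniform $L^2$ bound $\|\Delta_j\Delta_k^h a\|_{L^2}\lesssim\|\Delta_j a\|_{L^2}$, the convergent geometric sum in $k$ thanks to $\tau_2>0$, and then \eqref{bb0} for the second inequality. No discrepancies to report.
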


\begin{proof}
Indeed thanks to Definition \ref{def2} and the fact: $j\geq k-N_0$
for some fixed positive integer $N_0$ in dyadic operator
$\D_j\D^h_k$, we have \beno
\begin{split}
\|a\|_{\cB^{\tau_1,\tau_2}}&=\sum_{k\leq
j+N_0}2^{j\tau_1}2^{k\tau_2}\|\D_j\D_k^ha\|_{L^2}\\
&\lesssim
\sum_{j\in\Z}2^{j\tau_1}\|\D_ja\|_{L^2}\sum_{k\leq
j+N_0}2^{k\tau_2}\\
&\lesssim\sum_{j\in\Z}2^{j(\tau_1+\tau_2)}\|\D_ja\|_{L^2}\lesssim
\|a\|_{\dot{B}^{\tau_1+\tau_2}_{2,1}},
\end{split}
\eeno which together with \eqref{bb0} completes the proof of the
lemma.
\end{proof}

In  order to obtain a better description of the regularizing effect
of the transport-diffusion equation, we will use Chemin-Lerner type
spaces $\widetilde{L}^{\lambda}_T(B^s_{p,r}(\R^2))$ (see \cite{bcd}
for instance).

\begin{defi}\label{def3}
Let  $(r,\lam,p)\in[1,+\infty]^3$ and $T\in(0,+\infty]$.
 We define the $\wt{L}^\lam_T(\dot{B}^s_{p,r}(\R^2))$  norm by
\beno
\|u\|_{\wt{L}^\lam_T(\dot{B}^s_{p,r})}\eqdefa\Bigl(\sum_{j\in\Z}2^{jrs}
\bigl(\int_0^T\|\D_ju(t)\|_{L^p}^\lam\,dt\bigr)^{\f{r}{\lam}}\Bigr)^{\f{1}{r}}<\infty,
\eeno with the usual change if $r=\infty$. For short, we just denote
this space by  $\wt{L}^\lam_T(\dot{B}^s_{p,r})$.
\end{defi}

\begin{rmk}\label{rmk3}
Corresponding to Definitions \ref{def2} and \ref{def3}, we define
the norm \beno
\|u\|_{\wt{L}^2_T(\cB^{s_1,s_2})}\eqdefa\sum_{j,k\in\Z}2^{js_1}2^{ks_2}\|\Delta_j\D_k^h
u\|_{L^2_T(L^{2})}. \eeno Then it follows from the proof of Lemma
\ref{L1} that \beq\label{bb4}
\|u\|_{\wt{L}^2_T(\cB^{\tau_1,\tau_2})}\lesssim\|u\|_{\wt{L}^2_T(\dot{B}^{\tau_1+\tau_2}_{2,1})}\lesssim\|u\|_{L^2_T(\dot{H}^{s_1})}+\|u\|_{L^2(\dot{H}^{s_2})},
\eeq with $\tau_1,\tau_2$ and $s_1,s_2$ satisfying the assumptions
of Lemma \ref{L1}.
\end{rmk}

We also need both the isotropic  and anisotropic versions of
para-differential decomposition of  Bony \cite{Bo}.  We first recall
the isotropic para-differential decomposition from \cite{Bo}: let
$a, b\in \cS'(\R^2),$  \beq \label{bb5}\begin{split}
&ab=T(a,b)+\cR(a,b), \quad\mbox{or}\quad
ab=T(a,b)+\bar{T}(a,b)+ R(a,b), \quad\hbox{where}\\
& T(a,b)=\sum_{j\in\Z}S_{j-1}a\Delta_jb, \quad
\bar{T}(a,b)=T(b,a),\quad
 \cR(a,b)=\sum_{j\in\Z}\Delta_jaS_{j+2}b, \andf\\
&R(a,b)=\sum_{j\in\Z}\Delta_ja\tilde{\Delta}_{j}b,\quad\hbox{with}\quad
\tilde{\Delta}_{j}b=\sum_{\ell=j-1}^{j+1}\D_\ell b. \end{split} \eeq
We shall also use the following anisotropic version of Bony's
decomposition for the horizontal variables: \beq
\label{bb6}\begin{split} &ab=T^h(a,b)+\cR^h(a,b),
\quad\mbox{or}\quad
ab=T^h(a,b)+\bar{T}^h(a,b)+ R^h(a,b), \quad\hbox{where}\\
& T^h(a,b)=\sum_{k\in\Z}S_{k-1}^ha\Delta_k^h b, \quad
\bar{T}^h(a,b)=T^h(b,a),\quad
 \cR^h(a,b)=\sum_{k\in\Z}\Delta_k^haS_{k+2}^hb, \andf\\
&R^h(a,b)=\sum_{k\in\Z}\Delta_k^ha\tilde{\Delta}_{k}^hb,\quad\hbox{with}\quad
\tilde{\Delta}_{k}^hb=\sum_{\ell=k-1}^{k+1}\D_\ell^h b.
\end{split} \eeq
Considering the special structure of the functions in
$\cB^{s_1,s_2}(\R^2),$ we sometime use both \eqref{bb5} and
\eqref{bb6} simultaneously.

As an application of the above basic facts on Littlewood-Paley
theory, we prove the following product law in space
$\cB^{s_1,s_2}(\R^2)$ given by Definition \ref{def2}.

\begin{lem}\label{L2}
{\sl Let $s_1,s_2,\tau_1,\tau_2\in\R,$ which satisfy  $s_1,
s_2\leq\f{1}{2}$, $\tau_1,\tau_2\leq\f{1}{2}$ and $s_1+s_2>0$,
$\tau_1+\tau_2>0$. Then for $a\in\cB^{s_1,\tau_1}(\R^2)$ and
$b\in\cB^{s_2,\tau_2}(\R^2)$,
$ab\in\cB^{s_1+s_2-\f{1}{2},\tau_1+\tau_2-\f{1}{2}}(\R^2)$ and there
holds \beq\label{bb7}
\|ab\|_{\cB^{s_1+s_2-\f{1}{2},\tau_1+\tau_2-\f{1}{2}}}\lesssim\|a\|_{\cB^{s_1,\tau_1}}\|b\|_{\cB^{s_2,\tau_2}}.
\eeq}
\end{lem}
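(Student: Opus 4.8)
The plan is to prove the bilinear estimate \eqref{bb7} by combining the isotropic and anisotropic Bony decompositions \eqref{bb5}--\eqref{bb6}, exploiting the fact that a function in $\cB^{s_1,s_2}(\R^2)$ carries \emph{two} independent regularity indices, one for the full frequency $|\xi|$ and one for the horizontal frequency $|\xi_1|$. Since $\Delta_j\D_k^h$ is nontrivial only when $k\leq j+N_0$, the idea is: use the horizontal Bony decomposition $ab=T^h(a,b)+\bar T^h(a,b)+R^h(a,b)$ to split the product according to horizontal frequencies, and, for each of the resulting three pieces, estimate the vertical/full-frequency localisation $\Delta_j$ of the piece using a \emph{second} (isotropic) paraproduct decomposition or direct support considerations. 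Throughout, the two Bernstein inequalities of Lemma \ref{le2.1} (in the horizontal variable with $L^2_h\to L^\infty_h$ at cost $2^{k/2}$, and in the full variable at cost $2^{j/2}$) supply the $-\f12$ loss in each of the two indices; the constraints $s_i,\tau_i\leq\f12$ and $s_1+s_2>0$, $\tau_1+\tau_2>0$ are exactly what make the relevant geometric series over the high frequencies converge and the low-frequency summations bounded.

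Concretely, first I would write $a=\sum_{j,k}\Delta_j\D_k^h a$ and similarly for $b$, so that $ab=\sum \Delta_j\D_k^h a\,\Delta_{j'}\D_{k'}^h b$, and organise the sum by comparing $(k,k')$ and then $(j,j')$. For the horizontal low-high term $T^h(a,b)$: a block $S^h_{k-1}a\,\D_k^h b$ is horizontally localised near $2^k$, so $\Delta_m^h(T^h(a,b))$ forces $m\sim k$; applying the horizontal Bernstein inequality to $S^h_{k-1}a$ gives $\|S^h_{k-1}a\|_{L^\infty_h(L^2_v)}\lesssim\sum_{k'\le k}2^{k'/2}\|\D^h_{k'}a\|_{L^2}$, and since $\tau_1\le\f12$ (so $\sum_{k'\le k}2^{k'(1/2-\tau_1)}\lesssim 2^{k(1/2-\tau_1)}$ when $\tau_1<1/2$, with the endpoint handled by the $\ell^1$ structure of the $\cB$-norm) this is controlled by $2^{k(1/2-\tau_1)}\|a\|_{\cB^{s_1,\tau_1}}$ up to the need to also keep track of the $\Delta_j$-index — here one decomposes further in the full frequency, matching the $s$-indices in the same paraproduct manner and paying $2^{j/2}$ via full-variable Bernstein. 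The symmetric term $\bar T^h(a,b)$ is identical with the roles of $a,b$ exchanged. For the horizontal high-high term $R^h(a,b)=\sum_k\D^h_k a\,\tilde\D^h_k b$, the output horizontal frequency $m$ satisfies $m\le k+O(1)$, so one sums $\sum_{k\ge m}$, which converges precisely because $\tau_1+\tau_2>\f12\ge$ (the relevant threshold after the $2^{m/2}$ gain from horizontal Bernstein on the low-frequency output), i.e. because $\tau_1+\tau_2>0$ after accounting for the $-\f12$; the full-frequency index is treated the same way using $s_1+s_2>0$.

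The bookkeeping is genuinely two-dimensional in the index space: for each of the three horizontal regimes one must \emph{simultaneously} run an isotropic Littlewood--Paley argument to control the $2^{js_1+js_2}$-weighted $\Delta_j$ norms, and the two decompositions interact because $\Delta_j$ and $\D^h_k$ do not commute freely with the paraproduct truncations (one uses $\Delta_j S^h_{k-1} a$ etc., together with the almost-orthogonality relations \eqref{Pres_orth} and their horizontal analogues). I expect the main obstacle to be precisely this simultaneous handling of the two frequency localisations — in particular, checking that when one pays $2^{j/2}$ in the full variable one has \emph{not} also destroyed the horizontal gain, which requires using that on a dyadic block the full frequency dominates the horizontal frequency, $2^k\lesssim 2^j$, so that a single Bernstein application in the stronger variable suffices and the weaker-variable index is merely carried along. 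Once the geometry $k\lesssim j$ is used to reduce the number of genuinely free summation indices, each of the six terms (three horizontal regimes $\times$ two sub-regimes in the full variable, or handled by symmetry) reduces to a convergent geometric series in one index and an $\ell^1$-in-the-other summation producing the generic coefficient $(d_{j,k})$, which reassembles to the $\cB^{s_1+s_2-1/2,\tau_1+\tau_2-1/2}$ norm; the constraints $s_i,\tau_i\le\f12$ are used exactly to avoid logarithmic divergences at the top of the low-frequency sums. This mirrors, in two dimensions, the scheme of the product law in \cite{LZ}, so I would model the write-up on that argument, replacing the three-dimensional horizontal/vertical split by the present full/horizontal split.
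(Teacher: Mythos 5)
Your proposal is correct and follows essentially the same route as the paper: the paper also superposes the isotropic and horizontal Bony decompositions (yielding the nine terms $TT^h,\dots,RR^h$), uses the two Bernstein inequalities of Lemma~\ref{le2.1} to pay the $2^{j/2}$ and $2^{k/2}$ losses, invokes $s_i,\tau_i\le\f12$ together with the $\ell^1$ structure for the low-frequency sums and $s_1+s_2>0$, $\tau_1+\tau_2>0$ for the remainder sums, and exploits $k\le j+N_0$ in the blocks $\D_j\D_k^h$ exactly as you describe.
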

\begin{proof}
We first get by using Bony's decompositions \eqref{bb5} and
\eqref{bb6} that \beq\label{bb24}\begin{aligned}
ab=&\bigl(TT^h+T\bar{T}^h+TR^h+\bar{T}T^h+\bar{T}\bar{T}^h+\bar{T}R^h
+RT^h+R\bar{T}^h+RR^h\bigr)(a,b).
\end{aligned}\eeq
We shall present the detailed estimates to typical terms above.
Indeed applying Lemma \ref{le2.1} gives \beno
\begin{split}
\|\D_j\D_k^h(TR^h(a,b))\|_{L^2}&\lesssim
2^{\f{k}2}\sum_{{|j'-j|\leq4}\atop{k'\geq
k-N_0}}\|S_{j'-1}\D_{k'}^ha\|_{L^\infty_h(L^2_v)}
\|\D_{j'}\wt{\D}_{k'}^hb\|_{L^2}\\
&\lesssim2^{\f{k}2}\sum_{{|j'-j|\leq4}\atop{k'\geq
k-N_0}}d_{j',k'}2^{j'(\f{1}{2}-s_1-s_2)}2^{-k'(\tau_1+\tau_2)}\|a\|_{\cB^{s_1,\tau_1}}
\|b\|_{\cB^{s_2,\tau_2}}\\
&\lesssim
d_{j,k}2^{-j(s_1+s_2-\f{1}{2})}2^{-k(\tau_1+\tau_2-\f{1}{2})}
\|a\|_{\cB^{s_1,\tau_1}}\|b\|_{\cB^{s_2,\tau_2}}, \end{split}\eeno
as $\tau_1+\tau_2>0$ and $s_1\leq\f12$ so that
$\|S_{j'-1}\D_{k'}^ha\|_{L^\infty_h(L^2_v)}\lesssim
2^{j'(\f12-s_1)}2^{-k'\tau_1}\|a\|_{\cB^{s_1,\tau_1}}.$ The same
estimate holds for $\D_j\D_k^h(\bar{T}R^h(a,b)).$

Along the same line, we have \beno
\begin{split}
\|\D_j\D_k^h(RR^h(a,b))\|_{L^2}&\lesssim
2^{\f{j}{2}}2^{\f{k}{2}}\sum_{{j'\geq j-N_0}\atop{k'\geq k-N_0}}
\|\D_{j'}\D_{k'}^ha\|_{L^2}\|\wt{\D}_{j'}\wt{\D}_{k'}^hb\|_{L^2}\\
&\lesssim 2^{\f{j}{2}}2^{\f{k}{2}}\sum_{{j'\geq j-N_0}\atop{k'\geq k-N_0}}d_{j',k'}2^{-j'(s_1+s_2)}2^{-k'(\tau_1+\tau_2)}\|a\|_{\cB^{s_1,\tau_1}}\|b\|_{\cB^{s_2,\tau_2}}\\
&\lesssim
d_{j,k}2^{-j(s_1+s_2-\f{1}{2})}2^{-k(\tau_1+\tau_2-\f{1}{2})}
\|a\|_{\cB^{s_1,\tau_1}}\|b\|_{\cB^{s_2,\tau_2}} \end{split} \eeno
due to the fact:  $s_1+s_2>0$ and $\tau_1+\tau_2>0$. The estimate to
the remaining terms in \eqref{bb24} is identical, and we omit the
details here.

Whence thanks to \eqref{bb24}, we arrive at \beno
\|\D_j\D_k^h(ab)\|_{L^2}\lesssim
d_{j,k}2^{-j(s_1+s_2-\f{1}{2})}2^{-k(\tau_1+\tau_2-\f{1}{2})}
\|a\|_{\cB^{s_1,\tau_1}}\|b\|_{\cB^{s_2,\tau_2}}, \eeno which
implies \eqref{bb7}. This concludes the proof of  Lemma \ref{L2}.
\end{proof}

We finish this section by some product laws in  $\dH^s(\R^2)$ and
$\dot{B}^s_{2,1}(\R^2)$ (see \cite{bcd} for instance):

\begin{lem}\label{L5}
{\sl For any $s>-1$, there hold
\begin{itemize}
\item[] (i)\quad \ $\|ab\|_{\dH^s}\lesssim\|a\|_{L^\infty}\|b\|_{\dH^s}+\|a\|_{\dH^s}\|b\|_{L^\infty},\quad\text{for}\quad
s>0;$
\item[] (ii)\quad $\|ab\|_{\dH^s}\lesssim\|a\|_{\dot{B}^1_{2,1}}\|b\|_{\dH^s}+\langle\|a\|_{\dH^s}\|b\|_{\dot{B}^1_{2,1}}\rangle_{s>1}$;
\item[] (iii)\quad $\|ab\|_{\dH^s}\lesssim\|a\|_{L^\infty}\|b\|_{\dH^s}+\|a\|_{\dH^{s+1}}\|b\|_{L^2}$,
\end{itemize}
where the notation $A_s=B_s+\langle C_s\rangle_{s>s_0}$ means
$A_s=B_s$ if $s\leq s_0$ and $A_s=B_s+C_s$ if $s>s_0$.}
\end{lem}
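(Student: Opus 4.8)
All three estimates are instances of the standard paraproduct calculus, so the plan is to expand $ab$ by Bony's decomposition \eqref{bb5}, $ab=T(a,b)+\bar T(a,b)+R(a,b)$, and to estimate each dyadic block $\Delta_q(ab)$ using the almost-orthogonality relations \eqref{Pres_orth}, the isotropic two-dimensional Bernstein inequalities $\|\Delta_jf\|_{L^\infty}\lesssim2^{j}\|\Delta_jf\|_{L^2}$ and $\|\Delta_jf\|_{L^2}\lesssim2^{j}\|\Delta_jf\|_{L^1}$, and the dyadic characterizations of $\dot H^s$ and $\dot B^s_{2,1}$ from Remark \ref{rmk1}. Below, $(c_j)_{j\in\Z}$ and $(d_j)_{j\in\Z}$ are generic norm-one sequences in $\ell^2(\Z)$ and $\ell^1(\Z)$ that change from line to line; I will use repeatedly that the product of an $\ell^2$ sequence with an $\ell^1$ (or another $\ell^2$) sequence is summable, and that $\dot B^1_{2,1}(\R^2)\hookrightarrow L^\infty(\R^2)$ (a direct consequence of the first Bernstein inequality above).

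\emph{Part (i).} I would first note that each summand of $T(a,b)=\sum_jS_{j-1}a\,\Delta_jb$ is spectrally supported in a dyadic annulus of size $\sim2^{j}$, so $\|\Delta_qT(a,b)\|_{L^2}\lesssim\sum_{|q-j|\le4}\|S_{j-1}a\|_{L^\infty}\|\Delta_jb\|_{L^2}\lesssim c_q2^{-qs}\|a\|_{L^\infty}\|b\|_{\dot H^s}$; by symmetry $\bar T(a,b)=T(b,a)$ contributes the term $\|a\|_{\dot H^s}\|b\|_{L^\infty}$. For the remainder $R(a,b)=\sum_j\Delta_ja\,\tilde\Delta_jb$, whose $j$-th term sits in a ball of size $\sim2^{j}$, one has $\|\Delta_qR(a,b)\|_{L^2}\lesssim\sum_{j\ge q-N_0}\|\Delta_ja\|_{L^\infty}\|\tilde\Delta_jb\|_{L^2}\lesssim\|a\|_{L^\infty}\|b\|_{\dot H^s}\sum_{j\ge q-N_0}c_j2^{-js}$, and since $s>0$ the sum $\sum_{j\ge q-N_0}c_j2^{-(j-q)s}$ is again a norm-one $\ell^2$ sequence in $q$ by Young's inequality; summing the three pieces in $\ell^2(\Z)$ with weight $2^{qs}$ yields (i).

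\emph{Part (ii).} The bound on $T(a,b)$ from Part (i) together with $\|a\|_{L^\infty}\lesssim\|a\|_{\dot B^1_{2,1}}$ already gives $\|a\|_{\dot B^1_{2,1}}\|b\|_{\dot H^s}$ for every $s\in\R$. For the remainder I would use $\|\Delta_qR(a,b)\|_{L^2}\lesssim2^{q}\sum_{j\ge q-N_0}\|\Delta_ja\|_{L^2}\|\tilde\Delta_jb\|_{L^2}$ and bound the first factor in $\dot B^1_{2,1}$, the second in $\dot H^s$: this produces the weight $2^{-qs}$ multiplied by the geometric series $\sum2^{-(j-q)(1+s)}(\cdots)$, convergent precisely for $s>-1$, so $R(a,b)\lesssim\|a\|_{\dot B^1_{2,1}}\|b\|_{\dot H^s}$. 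The delicate piece is $\bar T(a,b)=\sum_jS_{j-1}b\,\Delta_ja$: the crude bound $\|S_{j-1}b\|_{L^\infty}\lesssim\|b\|_{\dot B^1_{2,1}}$ produces the term $\|a\|_{\dot H^s}\|b\|_{\dot B^1_{2,1}}$, while the sharper $\|S_{j-1}b\|_{L^\infty}\lesssim\sum_{j'\le j-2}2^{j'}\|\Delta_{j'}b\|_{L^2}\lesssim c_j2^{j(1-s)}\|b\|_{\dot H^s}$ (legitimate exactly because $1-s>0$), combined with $\|\Delta_ja\|_{L^2}\lesssim d_j2^{-j}\|a\|_{\dot B^1_{2,1}}$, produces $\|a\|_{\dot B^1_{2,1}}\|b\|_{\dot H^s}$ when $s<1$. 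Hence for $s\le1$ the second term on the right of (ii) is unnecessary, while for $s>1$ it is genuinely needed — which is precisely the content of the notation $\langle\,\cdot\,\rangle_{s>1}$. This case split, together with the need to choose the $L^1\!\to\!L^2$ Bernstein estimate in the remainders so that the geometric exponent comes out as $-(j-q)(1+s)$ rather than $-(j-q)s$ (thereby reaching the sharp range $s>-1$), are the only two points in the whole argument that call for care; everything else is bookkeeping.

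\emph{Part (iii).} Again $T(a,b)\lesssim\|a\|_{L^\infty}\|b\|_{\dot H^s}$ as in Part (i). For $\bar T(a,b)=\sum_jS_{j-1}b\,\Delta_ja$ one uses $\|S_{j-1}b\|_{L^2}\lesssim\|b\|_{L^2}$ and $\|\Delta_ja\|_{L^\infty}\lesssim2^{j}\|\Delta_ja\|_{L^2}\lesssim c_j2^{-js}\|a\|_{\dot H^{s+1}}$, giving $\|\Delta_q\bar T(a,b)\|_{L^2}\lesssim c_q2^{-qs}\|a\|_{\dot H^{s+1}}\|b\|_{L^2}$. For the remainder, $\|\Delta_q(\Delta_ja\,\tilde\Delta_jb)\|_{L^2}\lesssim2^{q}\|\Delta_ja\|_{L^2}\|\tilde\Delta_jb\|_{L^2}\lesssim2^{q}c_j2^{-j(s+1)}\|a\|_{\dot H^{s+1}}\|\tilde\Delta_jb\|_{L^2}$, and summing over $j\ge q-N_0$ (using that the $\|\tilde\Delta_jb\|_{L^2}$ are square-summable with sum $\lesssim\|b\|_{L^2}$) factors out $2^{-qs}$ and leaves a series that converges for $s>-1$. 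Collecting these contributions gives (iii), and the lemma then follows by extending the estimate from the Schwartz class by density.
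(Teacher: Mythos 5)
The paper does not actually prove Lemma \ref{L5} --- it is quoted from \cite{bcd} --- so there is no in-paper argument to compare against; your Bony-decomposition proof is the standard one and is, in substance, what the cited reference does. Parts (i) and (iii) are correct as written: the H\"older pairings chosen in each of $T$, $\bar T$, $R$, and the $L^1\!\to\!L^2$ Bernstein step in the remainder (which is what yields the exponent $-(j-q)(1+s)$ and hence the sharp range $s>-1$) are exactly right. In part (ii) there is one endpoint slip. Your sharper bound on $\bar T(a,b)$ rests on $\|S_{j-1}b\|_{L^\infty}\lesssim c_j2^{j(1-s)}\|b\|_{\dH^s}$, which, as you yourself note, requires $1-s>0$; it therefore establishes the one-term estimate only for $s<1$, yet you conclude it for $s\le 1$. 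The gap is not repairable at $s=1$: take $a$ a single unit-frequency block with $\|a\|_{\dot B^1_{2,1}}\sim 1$ and $b_N=N^{-1/2}\sum_{k=1}^{N}\phi(2^{-k}x)$ with $\widehat\phi\ge 0$ supported in an annulus and $\phi(0)>0$; then $\|b_N\|_{\dH^1}\lesssim 1$ while $b_N\sim N^{1/2}$ on the unit ball, so $\|\nabla a\, b_N\|_{L^2}\gtrsim N^{1/2}$ and $\|ab_N\|_{\dH^1}\to\infty$. So the bracket in (ii) should really read $\langle\cdot\rangle_{s\ge 1}$; since the paper only invokes (ii) for $s=s_1>1$, $s=s_2\in(-1,-\f12)$, and $s=0$, nothing downstream is affected, but you should either restrict your claim to $s<1$ or carry the second term at $s=1$. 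Everything else in the proposal is sound.
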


\begin{lem}\label{L6}
For any $s>-1$, there hold
\begin{itemize}
\item[] (i)\quad\ $\|ab\|_{\dot{B}^s_{2,1}}\lesssim\|a\|_{\dot{B}^1_{2,1}}\|b\|_{\dot{B}^s_{2,1}}+\langle\|a\|_{\dot{B}^s_{2,1}}
    \|b\|_{\dot{B}^1_{2,1}}\rangle_{s>1}$;
\item[] (ii)\quad $\|ab\|_{\dot{B}^s_{2,1}}\lesssim\|a\|_{\dot{B}^1_{2,1}}\|b\|_{\dot{B}^s_{2,1}}+\|a\|_{\dot{B}^{s+1}_{2,1}}\|b\|_{L^2}$.
\end{itemize}
\end{lem}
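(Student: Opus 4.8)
The plan is to prove the two product laws in $\dot B^s_{2,1}(\R^2)$ by the standard Bony paraproduct argument, following the same mechanism used in the proof of Lemma~\ref{L2}, only now with the isotropic decomposition \eqref{bb5} in place of the mixed one. For part (i), I would write $ab=T(a,b)+\bar T(a,b)+R(a,b)$ and estimate each of the three pieces in $\dot B^s_{2,1}$.

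For the paraproduct term $T(a,b)=\sum_{j}S_{j-1}a\,\Delta_j b$, the key point is that $\Delta_k(S_{j-1}a\,\Delta_j b)\equiv 0$ unless $|k-j|\le 4$, so $\Delta_k T(a,b)=\sum_{|j-k|\le 4}\Delta_k(S_{j-1}a\,\Delta_j b)$. I would bound $\|S_{j-1}a\|_{L^\infty}\lesssim \sum_{j'\le j-2}\|\Delta_{j'}a\|_{L^\infty}\lesssim \sum_{j'\le j-2}2^{j'}\|\Delta_{j'}a\|_{L^2}\lesssim \|a\|_{\dot B^1_{2,1}}$ (using Bernstein in dimension $2$, where $\|\Delta_{j'}a\|_{L^\infty}\lesssim 2^{j'}\|\Delta_{j'}a\|_{L^2}$, and summing the geometric series since $\dot B^1_{2,1}$ is at the critical index), so that $\|\Delta_k T(a,b)\|_{L^2}\lesssim \|a\|_{\dot B^1_{2,1}}\sum_{|j-k|\le4}\|\Delta_j b\|_{L^2}\lesssim \|a\|_{\dot B^1_{2,1}}\,c_k 2^{-ks}\|b\|_{\dot B^s_{2,1}}$ after multiplying by $2^{ks}$ and taking $\ell^1$ in $k$; here I use that $\{2^{js}\|\Delta_j b\|_{L^2}\}_j\in\ell^1$. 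The symmetric term $\bar T(a,b)=T(b,a)$ is the one that forces the restriction $s>1$: by the same computation $\|\Delta_k T(b,a)\|_{L^2}\lesssim \|b\|_{\dot B^1_{2,1}}\sum_{|j-k|\le4}\|\Delta_j a\|_{L^2}$, and multiplying by $2^{ks}$ and summing in $k$ gives $\lesssim \|a\|_{\dot B^s_{2,1}}\|b\|_{\dot B^1_{2,1}}$, which is only meaningful — i.e.\ the spaces match — when $s>1$ so that the index on $a$ stays above $-1$ and no low-frequency divergence appears; this is exactly the content of the bracket notation $\langle\cdot\rangle_{s>1}$. For the remainder $R(a,b)=\sum_j \Delta_j a\,\tilde\Delta_j b$, the spectral support of $\Delta_j a\,\tilde\Delta_j b$ is in a ball of radius $\sim 2^j$, so $\Delta_k$ picks up only $j\ge k-N_0$; using Bernstein $\|\Delta_k(\Delta_j a\,\tilde\Delta_j b)\|_{L^2}\lesssim 2^k\|\Delta_j a\,\tilde\Delta_j b\|_{L^1}\lesssim 2^k\|\Delta_j a\|_{L^2}\|\tilde\Delta_j b\|_{L^2}$, and then $2^{ks}\sum_{j\ge k-N_0}2^{k}2^{-j(s+1)}(2^{j}\|\Delta_j a\|_{L^2})(\cdots)$ converges in $k$ precisely because $s+1>0$, i.e.\ $s>-1$, yielding the bound $\|a\|_{\dot B^1_{2,1}}\|b\|_{\dot B^s_{2,1}}$ (and note $R$ is symmetric, so it could equally be attached to either factor). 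Part (ii) is proved the same way except that in the term $\bar T(a,b)=\sum_j S_{j-1}b\,\Delta_j a$ I do not bound $S_{j-1}b$ in $L^\infty$ via $\dot B^1_{2,1}$ but rather estimate $\|\Delta_k(S_{j-1}b\,\Delta_j a)\|_{L^2}$ by putting $\Delta_j a$ in $L^\infty$ (Bernstein: $\lesssim 2^j\|\Delta_j a\|_{L^2}$, absorbed into $2^{(s+1)}$) and $S_{j-1}b$ in $L^2$, giving $\|a\|_{\dot B^{s+1}_{2,1}}\|b\|_{L^2}$; the $T$ and $R$ terms are handled exactly as before, and the condition $s>-1$ is again what makes the $R$-term sum converge.

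The main obstacle, such as it is, is purely bookkeeping: making sure the geometric-series summations in $j$ and $k$ converge and that the resulting sequences lie in $\ell^1$ with the claimed weights, in particular tracking which endpoint conditions ($s>1$ for the symmetric paraproduct in (i), $s>-1$ for the remainder in both parts) are genuinely needed and which factor each piece is most naturally estimated on. There is no analytic difficulty beyond the standard Littlewood–Paley toolkit already set up in Lemma~\ref{le2.1} and equations \eqref{Pres_orth}, \eqref{bb5}; in fact this is a classical result and one could simply cite \cite{bcd}, as the statement already does, but the short self-contained argument above suffices.
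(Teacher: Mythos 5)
The paper itself gives no proof of this lemma --- it simply cites \cite{bcd} --- so your self-contained Bony-decomposition argument is the natural route and is essentially the textbook proof. Your treatment of the paraproduct $T(a,b)$, of the remainder $R(a,b)$ (where $s>-1$ is indeed exactly what makes the sum over $j\ge k-N_0$ converge), and all of part (ii) are correct.

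There is, however, a genuine gap in part (i): the term $\bar T(a,b)=\sum_j S_{j-1}b\,\Delta_j a$ in the range $-1<s\le 1$. The bracket convention $\langle\cdot\rangle_{s>1}$ (defined in Lemma \ref{L5}) means the second term is \emph{absent} when $s\le1$, i.e.\ the lemma asserts $\|ab\|_{\dot{B}^s_{2,1}}\lesssim\|a\|_{\dot{B}^1_{2,1}}\|b\|_{\dot{B}^s_{2,1}}$ alone for $s\le 1$. Your argument only produces the bound $\|a\|_{\dot{B}^s_{2,1}}\|b\|_{\dot{B}^1_{2,1}}$ for $\bar T$, and the claim that this bound ``is only meaningful when $s>1$'' is not an argument --- that estimate is in fact valid for every $s$, and nothing about an index dropping below $-1$ occurs in a paraproduct. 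What is actually needed for $s<1$ is a different estimate of the same term: bound $\|S_{j-1}b\|_{L^\infty}\lesssim\sum_{j'\le j-2}2^{j'}\|\Delta_{j'}b\|_{L^2}\lesssim 2^{j(1-s)}\sup_{j'}\bigl(2^{j's}\|\Delta_{j'}b\|_{L^2}\bigr)\lesssim 2^{j(1-s)}\|b\|_{\dot{B}^s_{2,1}}$, where the geometric sum converges precisely because $1-s>0$; then $2^{ks}\|\Delta_k\bar T(a,b)\|_{L^2}\lesssim\sum_{|j-k|\le4}2^{(k-j)s}\bigl(2^{j}\|\Delta_j a\|_{L^2}\bigr)\|b\|_{\dot{B}^s_{2,1}}$, which sums to $\|a\|_{\dot{B}^1_{2,1}}\|b\|_{\dot{B}^s_{2,1}}$ (the case $s=1$ follows from the direct embedding $\|S_{j-1}b\|_{L^\infty}\lesssim\|b\|_{\dot{B}^1_{2,1}}$). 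With this substitution the proof is complete; everything else in your write-up stands.
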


\medskip

\setcounter{equation}{0}
\section{$L^1_T(Lip)$ estimate of $Y_t$}\label{sect4}

 As it is well-known, the existence of
solutions to a nonlinear partial differential equation follows
essentially
 from the uniform estimates for its appropriate approximate
solutions. In Subsection \ref{subsect4.3}, we shall present the
uniform global estimates to the approximate solutions of
\eqref{a12}-\eqref{a13} provided that the initial data $(Y_0,Y_1)$
satisfies \eqref{A1a} and \eqref{A1}, and hence the proof of Theorem
\ref{T}. Toward this, a key ingredient used in Section
\ref{subsect4.3} will be the $L^1(\R^+; Lip(\R^2))$ estimate of
$Y_t.$ And this is the purpose of this section.

\subsection{The  $\|\na Y_t\|_{L^1_T(L^\infty)}$ estimate for
solutions of \eqref{a12ag}.} We first investigate the
 $\|\na Y_t\|_{L^1_T(L^\infty)}$ estimate on  solutions of the  linear equation
 \eqref{a12ag}.

\begin{prop}\label{p1}{\sl  Let $Y$ be a sufficiently smooth  solution of \eqref{a12ag} on $[0,T]$. Then there holds
\beq\label{2.9} \|\na Y_t\|_{L^1_T(L^\infty)}\lesssim
\|Y_t\|_{L_T^1(\cB^{\f{3}{2},\f{1}{2}})}\lesssim\|Y_1\|_{\cB^{-\f{1}{2},\f{1}{2}}}+\|\p_1
Y_0\|_{\cB^{-\f{1}{2},\f{1}{2}}}+\|\D
Y_0\|_{\cB^{-\f{1}{2},\f{1}{2}}}+\|\vv f\|_{L^1_T(\cB^{0,0})}. \eeq}
\end{prop}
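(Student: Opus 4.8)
The plan is to work at the level of the anisotropic Littlewood-Paley blocks $\D_j\D_k^h Y_t$ and to estimate them through the eigenvalue structure \eqref{5.2}--\eqref{5.2ad} of the symbolic equation. The first step is to reduce the Lipschitz estimate to the $\cB^{3/2,1/2}$-norm: by the Bernstein inequalities of Lemma \ref{le2.1}, $\|\na \D_j\D_k^h Y_t\|_{L^\infty}\lesssim 2^{j}2^{k\cdot 0}\cdot 2^{j}2^{k}\|\D_j\D_k^h Y_t\|_{L^2}$ after accounting for the $L^2\to L^\infty$ loss in both variables; summing in $j,k$ (and using that $k\le j+N_0$ on the support of $\D_j\D_k^h$ to absorb the isotropic Bernstein factor cleanly) yields the first inequality $\|\na Y_t\|_{L^1_T(L^\infty)}\lesssim\|Y_t\|_{L^1_T(\cB^{3/2,1/2})}$. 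So everything reduces to the second inequality, a linear estimate on \eqref{a12ag} in the time-integrated anisotropic norm.

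For the main estimate, I would split the frequency space into the two regions flagged in the introduction: the ``parabolic'' region $R_1=\{|\xi|^2>2|\xi_1|\}$ and the ``hyperbolic/degenerate'' region $R_2=\{|\xi|^2\le 2|\xi_1|\}$. On $R_1$ one has $|\xi|^4-4\xi_1^2\ge |\xi|^4/ C$ roughly, so $\la_+\sim -|\xi|^2$ and $\la_-\sim -\xi_1^2/|\xi|^2$, both real and well-separated; the Duhamel formula for $Y_t$ then gives a sum of two semigroup contributions, and in this region the relevant block-frequencies are comparable to $2^j$ (vertical variable dominates), so $\D_k^h$ localizes to $2^k\lesssim 2^{j}$ and one can afford the $e^{-ct2^{2j}}$ decay from $\la_+$ together with $e^{-ct2^{2k}}$-type decay from $\la_-$ (since on $R_1$, $|\xi_1|^2/|\xi|^2\gtrsim 2^{2k}/$ something — this is where the anisotropy is used). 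On $R_2$, where $|\xi_1|$ is small relative to $|\xi|$, the eigenvalues can be complex with real part $\sim -|\xi|^2$, so one simply has $|e^{\la_\pm t}|\lesssim e^{-ct|\xi|^2}\sim e^{-ct2^{2j}}$ and the horizontal frequency is constrained by $2^{2k}\lesssim 2^{j}$, which is more than enough. In both regions, integrating the semigroup bounds in $t$ over $[0,T]$ turns $e^{-ct2^{2j}}$ into $2^{-2j}$ (and similarly for the $\la_-$ branch), which exactly compensates the weights $2^{3j/2}2^{k/2}$ against the data norms $2^{-j/2}2^{k/2}$: for the initial-data part, $Y_t(0)=Y_1$ contributes at regularity $-1/2$ in the first index, $\la_-\la_+Y_0=\xi_1^2 Y_0$ contributes $\p_1Y_0$ (at $-1/2$) and the $(\la_++\la_-)Y_0=-|\xi|^2Y_0$ term contributes $\D Y_0$ (at $-1/2$); for the forcing part, one loses two time-derivatives worth of smoothing so $\vv f$ enters only at regularity $(0,0)$. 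Summation over $j$ and over $k$ (with the constraint $k\le j+N_0$) then produces the stated $\ell^1$ sums, i.e. the $\cB$-norms on the right of \eqref{2.9}.

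The bookkeeping I would organize as follows: (i) write $Y_t$ via the explicit fundamental solution of the $2\times2$ first-order system obtained from \eqref{a12ag}, namely $\widehat{Y_t}(\xi)=\frac{\la_+e^{\la_+t}-\la_-e^{\la_-t}}{\la_+-\la_-}\widehat{Y_1}+\frac{\la_+\la_-(e^{\la_+t}-e^{\la_-t})}{\la_+-\la_-}\widehat{Y_0}+\int_0^t\frac{\la_+e^{\la_+(t-\tau)}-\la_-e^{\la_-(t-\tau)}}{\la_+-\la_-}\widehat{\vv f}(\tau)\,d\tau$ (when $\la_+\ne\la_-$, which holds off the measure-zero set $|\xi|^4=4\xi_1^2$); (ii) on each frequency region bound the scalar multipliers pointwise by $e^{-c t\,m(\xi)}$ with $m(\xi)$ comparable to $2^{2j}$ or to $\xi_1^2/|\xi|^2$ as appropriate, being careful that the factor $|\la_+\la_-|/|\la_+-\la_-|=\xi_1^2/\sqrt{|\xi|^4-4\xi_1^2}$ in the $Y_0$-term is controlled by a combination of $|\xi_1|^2$ and $|\xi|^2$ exactly matching the $\p_1Y_0$ and $\D Y_0$ norms; (iii) take $L^1_t$, then $L^2_\xi$ on each dyadic shell, then the weighted $\ell^1(j,k)$ sum. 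The main obstacle is step (ii) near the transition set $|\xi|^4=4\xi_1^2$, where $\la_+-\la_-\to0$ and naive pointwise bounds on $1/(\la_+-\la_-)$ blow up; there one must not estimate each exponential separately but instead use that $\frac{e^{\la_+t}-e^{\la_-t}}{\la_+-\la_-}=\int_0^1 e^{(s\la_++(1-s)\la_-)t}\,ds$ is uniformly bounded by $t\,e^{-ct|\xi|^2}$ (since $\RE\la_\pm\le -|\xi|^2/2$ there), so the apparent singularity is removable and one still gets $|\xi|^2$-worth of parabolic smoothing, which suffices because in the degenerate region $|\xi_1|\lesssim|\xi|$ anyway. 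Modulo that care, the anisotropic summation is routine given Lemma \ref{le2.1} and the definition of $\cB^{s_1,s_2}$.
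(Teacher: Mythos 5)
Your strategy is sound and genuinely different from the paper's. The paper never writes down the solution formula: it runs a block-level energy argument, pairing $\D_j\D_k^h$\eqref{a12ag} with $\D_j\D_k^hY_t$ and with $\D\D_j\D_k^hY$ and forming a functional $g_{j,k}^2\sim\|\D_j\D_k^hY_t\|_{L^2}^2+\|\p_1\D_j\D_k^hY\|_{L^2}^2+\|\D\D_j\D_k^hY\|_{L^2}^2$ (the cross term $-\f14(\D_j\D_k^hY_t\,|\,\D\D_j\D_k^hY)$ makes it coercive), then proving the dissipation term dominates $m_{j,k}\,g_{j,k}^2$ with $m_{j,k}=2^{2j}$ when $j\le\f{k+1}2$ and $m_{j,k}=2^{2k-2j}$ when $j>\f{k+1}2$, dividing by $g_{j,k}$ and integrating in time; in the second case an extra pass through the plain $Y_t$-energy identity, treating $\p_1^2Y$ as a source, upgrades the rate for $Y_t$ to $2^{2j}$. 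Your eigenvalue computation captures exactly the same two rates ($|\xi|^2$ and $\xi_1^2/|\xi|^2\sim2^{2k-2j}$), and your divided-difference device $\f{e^{\la_+t}-e^{\la_-t}}{\la_+-\la_-}=\int_0^1e^{(s\la_++(1-s)\la_-)t}\,ds$ is the right way to kill the apparent singularity at $|\xi|^4=4\xi_1^2$ (note it yields the bound $e^{-ct|\xi|^2}$ with no factor of $t$, since each integrand already has that modulus). What the energy method buys is reusability — the same functional is applied verbatim to the nonlinear system in Proposition \ref{prop5.1} — whereas the explicit formula is confined to the constant-coefficient equation; but that is all Proposition \ref{p1} requires, so your route is legitimate.

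Two slips to repair, neither fatal. First, the Bernstein count for the reduction $\|\na Y_t\|_{L^\infty}\lesssim\|Y_t\|_{\cB^{3/2,1/2}}$ is wrong as written: the correct factors are $2^j$ from the gradient, $2^{k/2}$ from $L^2_h\to L^\infty_h$, and $2^{\ell/2}\lesssim2^{j/2}$ from the vertical blocks (using $\ell\le j+N_0$), giving $2^{\f{3j}2}2^{\f k2}$, not $2^{2j}2^{k}$. Second, your region bookkeeping is inverted in places: the degenerate region $\{|\xi|^2\le2|\xi_1|\}$ imposes $2^{2j}\lesssim2^{k}$ (not $2^{2k}\lesssim2^{j}$), which together with the universal constraint $k\le j+N_0$ confines it to $|\xi|\lesssim1$; in the complementary region it is the prefactors $|\la_\pm|/|\la_+-\la_-|$ and $|\la_+\la_-|/|\la_+-\la_-|$ that make the time-integrated multipliers come out to $2^{-2j}$ against $\widehat{Y_1}$ and $\widehat{\vv f}$, and to $O(1)$ plus $2^{2k-4j}$ against $\widehat{Y_0}$ — matching $\|\D Y_0\|_{\cB^{-1/2,1/2}}$ and $\|\p_1Y_0\|_{\cB^{-1/2,1/2}}$ respectively. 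These weight verifications, with $k\le j+N_0$ used throughout the $\ell^1(j,k)$ summation, are where the actual content of the proof sits, so they should be written out rather than declared routine.
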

\begin{proof}
Applying operator $\D_j\D_k^h$ to \eqref{a12ag}, we first get  that
\beq
\D_j\D_k^hY_{tt}-\D\D_j\D_k^hY_t-\p_1^2\D_j\D_k^hY=\D_j\D_k^h\vv f.
\label{2.1} \eeq Taking the $L^2$ inner product of \eqref{2.1} with
$\D_j\D_k^hY_t$ gives \beq
\f12\f{d}{dt}\Bigl(\|\D_j\D_k^hY_t\|_{L^2}^2+\|\p_1\D_j\D_k^hY\|_{L^2}^2\Bigr)+\|\na\D_j\D_k^hY_t\|_{L^2}^2
=(\D_j\D_k^h \vv f\ |\ \D_j\D_k^hY_t). \label{2.2} \eeq While taking
the $L^2$ inner product of \eqref{2.1} with $\D\D_j\D_k^hY$ leads to
\beno (\D_j\D_k^hY_{tt}\ |\
\D\D_j\D_k^hY)-\f12\f{d}{dt}\|\D\D_j\D_k^h
Y\|_{L^2}^2-\|\p_1\na\D_j\D_k^hY\|_{L^2}^2=(\D_j\D_k^h \vv f\ |\
\D\D_j\D_k^hY). \eeno Notice that \beno (\D_j\D_k^hY_{tt}\ |\
\D\D_j\D_k^hY)=\f{d}{dt}(\D_j\D_k^hY_{t}\ |\
\D\D_j\D_k^hY)-(\D_j\D_k^hY_{t}\ |\ \D\D_j\D_k^hY_t), \eeno hence
there holds that \beq\label{2.3}
\begin{split}
&\f{d}{dt}\Bigl(\f12\|\D\D_j\D_k^h Y\|_{L^2}^2-(\D_j\D_k^hY_{t}\ |\
\D\D_j\D_k^hY)\Bigr)-\|\na\D_j\D_k^hY_t\|_{L^2}^2+\|\p_1\na\D_j\D_k^hY\|_{L^2}^2\\
&=-(\D_j\D_k^h\vv f\ |\ \D\D_j\D_k^hY).
\end{split}
\eeq Summing up \eqref{2.2} with $\f14\times$\eqref{2.3} gives
\beq\label{2.4}
\begin{split}
\f{d}{dt} &g_{j,k}(t)^2+\f34\|\na\D_j\D_k^hY_t\|_{L^2}^2+\f14\|\p_1\na\D_j\D_k^hY\|_{L^2}^2\\
&=\bigl(\D_j\D_k^h\vv f\ |\ \D_j\D_k^hY_t-\f14\D\D_j\D_k^hY\bigr),
\end{split}
\eeq
where
 \beno\begin{split}
g_{j,k}(t)^2\eqdefa&\f12\bigl(\|\D_j\D_k^hY_t(t)\|_{L^2}^2+\|\p_1\D_j\D_k^hY(t)\|_{L^2}^2
+\f14\|\D\D_j\D_k^hY(t)\|_{L^2}^2\bigr)\\
&-\f14\bigl(\D_j\D_k^hY_t(t)\ |\ \D\D_j\D_k^hY(t)\bigr).
\end{split}
\eeno

It is easy to check that \beno \f14\bigl|\bigl(\D_j\D_k^hY_t\ |\
\D\D_j\D_k^hY\bigr)\bigr|\leq\f1{16}\|\D\D_j\D_k^hY\|_{L^2}^2+\f14\|\D_j\D_k^hY_t\|_{L^2}^2,
\eeno which implies \beno
\begin{split}
\f14\|\D_j&\D_k^hY_t(t)\|_{L^2}^2+\f12\|\p_1\D_j\D_k^hY(t)\|_{L^2}^2+\f1{16}\|\D\D_j\D_k^hY(t)\|_{L^2}^2\\
&\leq
g_{j,k}(t)^2\leq\f34\|\D_j\D_k^hY_t(t)\|_{L^2}^2+\f12\|\p_1\D_j\D_k^hY(t)\|_{L^2}^2+\f3{16}\|\D\D_j\D_k^hY(t)\|_{L^2}^2,
\end{split}
\eeno or equivalently
 \beq \label{2.5}
g_{j,k}(t)^2\sim\|\D_j\D_k^hY_t(t)\|_{L^2}^2+\|\p_1\D_j\D_k^hY(t)\|_{L^2}^2+\|\D\D_j\D_k^hY(t)\|_{L^2}^2.
\eeq With \eqref{2.4} and \eqref{2.5}, and according to the
heuristic analysis in Section \ref{sect2}, we shall divide the proof
of \eqref{2.9} into two cases: one is when  $j\leq \f{k+1}2,$  and
the other one is when  $j> \f{k+1}2.$

\no {\bf Case (1)}: when $j\leq \f{k+1}2.$ In this case, we infer
from Lemma \ref{le2.1} and \eqref{2.5} that \beno g_{j,k}(t)^2\sim
\|\D_j\D_k^hY_t(t)\|_{L^2}^2+\|\p_1\D_j\D_k^hY(t)\|_{L^2}^2, \eeno
and \beno
\begin{split}
\|\na\D_j\D_k^hY_t(t)&\|_{L^2}^2+\|\p_1\na\D_j\D_k^hY(t)\|_{L^2}^2\\
&\geq
c2^{2j}\bigl(\|\D_j\D_k^hY_t(t)\|_{L^2}^2+\|\p_1\D_j\D_k^hY(t)\|_{L^2}^2\bigr)
\geq c2^{2j}g_{j,k}(t)^2, \end{split}\eeno from which, for any
$\e>0,$ dividing \eqref{2.4} by $g_{j,k}(t)+\e,$ then taking $\e\to
0$ and integrating the resulting equation over $[0,T],$ we deduce
\beq \label{2.6}
\begin{split}
\|&\D_j\D_k^hY_t\|_{L^\infty_T(L^2)}+\|\p_1\D_j\D_k^hY\|_{L^\infty_T(L^2)}\\
&\quad+
c2^{2j}\bigl(\|\D_j\D_k^hY_t\|_{L^1_T(L^2)}+\|\p_1\D_j\D_k^hY\|_{L^1_T(L^2)}\bigr)\\
&\lesssim
\|\D_j\D_k^hY_1\|_{L^2}+\|\p_1\D_j\D_k^hY_0\|_{L^2}+\|\D_j\D_k^h\vv
f\|_{L^1_T(L^2)}.
\end{split}
\eeq

\no {\bf Case (2)}: when $j> \f{k+1}2.$ In this case, we  apply
Lemma \ref{le2.1} to obtain that \beno
\begin{split}
&g_{j,k}(t)^2 \sim
\|\D_j\D_k^hY_t(t)\|_{L^2}^2+\|\D\D_j\D_k^hY(t)\|_{L^2}^2,\end{split}
\eeno  and
 \beno
\begin{split}
&\|\na\D_j\D_k^hY_t(t)\|_{L^2}^2+\|\p_1\na\D_j\D_k^hY(t)\|_{L^2}^2\\
&\qquad\geq
c\frac{2^{2k}}{2^{2j}}\bigl(\|\D_j\D_k^hY_t(t)\|_{L^2}^2+\|\D\D_j\D_k^hY(t)\|_{L^2}^2\bigr)
\geq c\frac{2^{2k}}{2^{2j}} g_{j,k}(t)^2,
\end{split}
\eeno from which and \eqref{2.4},  we deduce by a similar argument
for \eqref{2.6}
 that \beq \label{2.7}
\begin{split}
\|&\D_j\D_k^hY_t\|_{L^\infty_T(L^2)}+\|\D\D_j\D_k^hY\|_{L^\infty_T(L^2)}\\
&\qquad+
c\f{2^{2k}}{2^{2j}}\bigl(\|\D_j\D_k^hY_t\|_{L^1_T(L^2)}+\|\D\D_j\D_k^hY\|_{L^1_T(L^2)}\bigr)\\
&\lesssim \|\D_j\D_k^hY_1\|_{L^2}+\|\D\D_j\D_k^hY_0\|_{L^2}+
\|\D_j\D_k^h\vv f\|_{L^1_T(L^2)}.
\end{split}
\eeq

On the other hand, via \eqref{2.2}, one has \beno
\f12\f{d}{dt}\|\D_j\D_k^hY_t(t)\|_{L^2}^2+\|\na\D_j\D_k^hY_t(t)\|_{L^2}^2=\bigl(\p_1^2\D_j\D_k^hY+\D_j\D_k^h\vv
f\ |\ \D_j\D_k^hY_t\bigr), \eeno from which, Lemma \ref{le2.1} and
\eqref{2.7}, we conclude \beq\label{2.8}
\begin{split}
\|\D_j\D_k^h&Y_t\|_{L^\infty_T(L^2)}+c2^{2j}\|\D_j\D_k^hY_t\|_{L^1_T(L^2)}\\
&\leq \|\D_j\D_k^hY_1\|_{L^2}+C\bigl(2^{2k}\|\D_j\D_k^hY\|_{L^1_T(L^2)}+\|\D_j\D_k^h\vv f\|_{L^1_T(L^2)}\bigr)\\
&\lesssim
\|\D_j\D_k^hY_1\|_{L^2}+\|\D\D_j\D_k^hY_0\|_{L^2}+\|\D_j\D_k^h\vv
f\|_{L^1_T(L^2)}\quad\mbox{for}\quad j>\f{k+1}2.
\end{split}
\eeq

By Definition \ref{def2}, we get, by  combining \eqref{2.6} with
\eqref{2.8}, that \beno
\|Y_t\|_{L_T^1(\cB^{\f{3}{2},\f{1}{2}})}\lesssim\|Y_1\|_{\cB^{-\f{1}{2},\f{1}{2}}}+\|\p_1
Y_0\|_{\cB^{-\f{1}{2},\f{1}{2}}}+\|\D
Y_0\|_{\cB^{-\f{1}{2},\f{1}{2}}}+\|\vv f\|_{L^1_T(\cB^{0,0})}, \eeno
which together with the fact \beno \begin{split} \|\na
Y_t\|_{L_T^1(L^\infty)}&\lesssim \sum_{j,k\in\Z}\|\D_j\D^h_k\na
Y_t\|_{L_T^1(L^\infty)}\\
&\lesssim\sum_{\substack{j,k,\ell\in\Z\\
j\geq\ell-N_0}}2^{j}2^{\f{k}{2}}2^{\f{\ell}{2}}\|\D_j\D^h_k\D^v_\ell
Y_t\|_{L_T^1(L^2)}\\
&\lesssim\sum_{j,k\in\Z}2^{\f{3j}2}2^{\f{k}{2}}\|\D_j\D^h_k
Y_t\|_{L_T^1(L^2)}\lesssim \|Y_t\|_{L_T^1(\cB^{\f{3}{2},\f{1}{2}})},
\end{split}\eeno implies \eqref{2.9}. This finishes the proof of
Proposition \ref{p1}.
\end{proof}

\subsection{$L^1_T(\cB^{0,0})$ estimate of  $\vv f(Y,q)$ given by \eqref{a13}.}

With \eqref{2.9}, to derive the $L^1_T(Lip(\R^2))$ estimate of $Y_t$
for smooth enough solutions of \eqref{a12}, we  need to estimate the
$L^1_T(\cB^{0,0})$ norm of
 $\vv f(Y,q)$ given by \eqref{a13}. This will be done in  this subsection.

\begin{prop}\label{p2} {\sl Let $(Y, q)$ be a smooth enough solution of \eqref{a14} (or equivalently \eqref{a12}-\eqref{a13})
on $[0,T]$, with the initial data satisfying \eqref{A1a}. We assume
further that \beq\label{bb14} \|\na
Y\|_{L^\infty_T(\dot{B}^1_{2,1})}\leq c_0 \eeq for some $c_0$
sufficiently small. Then one has \beq\label{bb15}
\|\na q\|_{L^1_T(\cB^{0,0})}\leq C\Bigl\{\|Y_t\|_{L_T^2(\cB^{\f{1}{4},\f{1}{4}})}\|\na Y_t\|_{L_T^2(\cB^{\f{1}{4},\f{1}{4}})}\\
 +\|\p_1 Y\|_{L_T^2(\cB^{\f{1}{4},\f{1}{4}})}\|\p_1\na
Y\|_{L_T^2(\cB^{\f{1}{4},\f{1}{4}})}\Bigr\}.
\eeq}
\end{prop}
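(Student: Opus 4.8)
The plan is to start from an explicit elliptic equation for the pressure $q$. Taking the divergence of the momentum equation in \eqref{a14}, namely of $Y_{tt}-\na_Y\cdot\na_Y Y_t-\p_1^2Y+\na_Yq=\vv 0$, and using the constraint $\na_Y\cdot Y_t=0$ together with $\det(I+\na Y)=1$, one obtains (after moving the leading second-order part to the left) an equation of the form $\Delta q = \dive \vv g$, where $\vv g$ is a sum of quadratic expressions in $\na Y_t$, $\p_1\na Y$ and $\na Y$. More precisely, writing $\na_Y = \na + (\cA_Y^T - I)\na$ and using that $\cA_Y^T - I$ is linear in $\na Y$, the ``bad'' top-order terms $\na\cdot\na q$ get absorbed into $\Delta q$ up to a commutator that is itself of the schematic form $\dive(\na Y\cdot\na q)$, which under the smallness hypothesis \eqref{bb14} can be moved to the left-hand side. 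So the first step is to derive rigorously the identity $\Delta q = \dive \vv g + (\text{small})\cdot\na q$ and read off that $\vv g$ consists of terms like $\na Y_t\cdot\na Y_t$, $\p_1 Y\cdot\p_1\na Y$, and $\na Y^2\cdot\p_1\na Y$ — here the special role of $Y^2$ enters because $\vv b\circ X=(1+\p_1Y^1,\p_1Y^2)^T$ from \eqref{a9}, so the $\p_1^2 Y$ forcing contributes precisely the structure $\na Y^2\cdot\p_1\na Y$ rather than $\na Y^1\cdot\p_1\na Y$.

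The second step is to run this through anisotropic Littlewood-Paley analysis. Since $\na(-\Delta)^{-1}\dive$ is a bounded Fourier multiplier of order zero (and commutes with the Fourier localizations $\D_j\D_k^h$), estimating $\|\na q\|_{L^1_T(\cB^{0,0})}$ reduces to estimating $\|\vv g\|_{L^1_T(\cB^{0,0})}$ plus the small correction term $\|\na Y\cdot\na q\|_{L^1_T(\cB^{0,0})}$; the latter is handled by Lemma \ref{L2} (product law in $\cB^{s_1,s_2}$) together with \eqref{bb14}, absorbing $c_0\|\na q\|_{L^1_T(\cB^{0,0})}$ into the left. Then each quadratic term in $\vv g$ is estimated by applying Lemma \ref{L2} with the borderline choice $s_1=s_2=\tau_1=\tau_2=\frac14$: since $\frac14\leq\frac12$ and $\frac14+\frac14=\frac12>0$, the product of two factors in $\cB^{\frac14,\frac14}$ lands in $\cB^{\frac14+\frac14-\frac12,\,\frac14+\frac14-\frac12}=\cB^{0,0}$, which is exactly the target space. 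Splitting in time by Cauchy–Schwarz, $\|\vv g\|_{L^1_T(\cB^{0,0})}\lesssim \|a\|_{L^2_T(\cB^{\frac14,\frac14})}\|b\|_{L^2_T(\cB^{\frac14,\frac14})}$ for each bilinear pairing $a\cdot b$, which produces exactly the three products on the right-hand side of \eqref{bb15}.

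The main obstacle, and where care is needed, is the top-order pressure term: $\na_Y q$ contains a genuine second derivative of $q$ after taking one more divergence, so one must verify that the elliptic operator on the left is really a small perturbation of $\Delta$ in the relevant anisotropic norm, and that the commutator $[\cA_Y^T\na\,,\,\dive]q$-type terms have the right structure — i.e. they are products of $\na Y$ (small, in $L^\infty_T(\dot B^1_{2,1})$) with $\na q$, not products involving two derivatives of $Y$ hitting $q$. This is precisely what the algebraic structure of \eqref{a14} (the identity $\det(I+\na Y)=1$, hence $\sum_i \p_i b_{ij}=0$ for the cofactor matrix) buys us: the map $q\mapsto\na_Y\cdot(\cA_Y^T\na q)$ is in divergence form with divergence-free-in-the-first-index coefficients, so no uncontrolled second-order term survives. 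Once that reduction is in place, everything else is a routine application of Lemmas \ref{le2.1}, \ref{L1} and \ref{L2}, and I would only spell out one representative bilinear estimate, say the $\na Y_t\cdot\na Y_t$ term, in the style of the proof of Lemma \ref{L2}, leaving the remaining two to the reader.
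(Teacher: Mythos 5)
Your proposal follows essentially the same route as the paper's proof: take $\na_Y\cdot$ of the momentum equation in \eqref{a14} (the viscous term vanishes since it is the Laplacian of a divergence-free field in Eulerian variables, the pressure term is a divergence-form perturbation of $\D q$ thanks to $\sum_i\p_i(\cA_Y)_{ij}=0$, and the constraint $\na\cdot Y=\r(Y)$ turns $\na_Y\cdot\p_1^2Y$ into divergence form with exactly the $\p_1Y$ / $\na Y^2$ structure), absorb the $\na Y\cdot\na q$ contributions using \eqref{bb14}, and estimate the remaining bilinear terms with Lemma \ref{L2} at the indices $\f14,\f14$. The only imprecision is that the time-derivative term enters your $\vv g$ as $\p_t\cA_Y\,Y_t\sim \na Y_t\cdot Y_t$ (one derivative on a single factor, which is what yields the product $\|Y_t\|_{L_T^2(\cB^{\f14,\f14})}\|\na Y_t\|_{L_T^2(\cB^{\f14,\f14})}$ in \eqref{bb15}), not as $\na Y_t\cdot\na Y_t$.
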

\begin{proof}
Thanks to \eqref{a14}, we get by taking $\p_t$ to $\na_Y\cdot Y_t=0$
that \beq\label{bb15ad} \na_Y\cdot Y_{tt}=-\p_t\cA_Y^T\na\cdot Y_t.
\eeq While for $c_0$ sufficiently small so that \beno \|\na
Y\|_{L^\infty_T(L^\infty)}\leq C\|\na
Y\|_{L^\infty_T(\dot{B}^1_{2,1})}\leq Cc_0\leq\f{1}{2}, \eeno
$X(t,y)$ determined by \eqref{a6} has a smooth inverse map
$X^{-1}(t,x)$ with $X(t,X^{-1}(t,x))=x$ and $ X^{-1}(t,X(t,y))=y.$
Then it follows from $\na_Y\cdot Y_t=0$ that \beno
\na_Y\cdot(\na_Y\cdot\na_Y Y_t)=[\na_x\cdot\D_x(Y_t\circ
X^{-1}(t,x))]\circ X(t,y)=\na_Y\cdot\na_Y(\na_Y\cdot Y_t)=0,\eeno
from the latter  and \eqref{bb15ad}, we obtain by  taking
$\na_Y\cdot$ to the first equation of \eqref{a14} that \beno
\na_Y\cdot\na_Y q=\p_t\cA_Y^T\na\cdot Y_t+\na_Y\cdot\p_1^2Y, \eeno
or equivalently \beq\label{bb8}
\begin{split}
\D q=&-(\na_Y-\na)\cdot\na_Y q-\na\cdot(\na_Y-\na)q+\p_t\cA_Y^T\na\cdot Y_t+\na_Y\cdot\p_1^2Y\\
=&-\na\cdot\bigl((\cA_Y-I)\cA_Y^T\na
q\bigr)-\na\cdot\bigl((\cA_Y^T-I)\na q\bigr)+\na\cdot(\p_t\cA_Y
Y_t)+\na_Y\cdot\p_1^2Y. \end{split} \eeq  The above, Lemma
\ref{le2.1} and Definition \ref{def2} lead to
\beq\label{bb9}\begin{aligned}
\|\na q\|_{L^1_T(\cB^{0,0})}\leq&\|(\cA_Y-I)\cA_Y^T\na q\|_{L^1_T(\cB^{0,0})}+\|(\cA_Y^T-I)\na q\|_{L^1_T(\cB^{0,0})}\\
&+\|\p_t\cA_Y
Y_t\|_{L^1_T(\cB^{0,0})}+\|\na(-\D)^{-1}(\na_Y\cdot\p_1^2Y)\|_{L^1_T(\cB^{0,0})}.
\end{aligned}\eeq
Applying Lemma \ref{L2} and \eqref{a14a}, one has  \beq\label{bb10}
\begin{split}
&\|(\cA_Y^T-I)\na q\|_{L^1_T(\cB^{0,0})}\lesssim\|\na
Y\|_{L^\infty_T(\cB^{\f{1}{2},\f{1}{2}})}\|\na
q\|_{L^1_T(\cB^{0,0})},\\
&\|(\cA_Y-I)\cA_Y^T\na q\|_{L^1_T(\cB^{0,0})} \lesssim(1+\|\na
Y\|_{L^\infty_T(\cB^{\f{1}{2},\f{1}{2}})})\|\na
Y\|_{L^\infty_T(\cB^{\f{1}{2},\f{1}{2}})}\|\na
q\|_{L^1_T(\cB^{0,0})}\quad\mbox{and}\\
& \|\p_t\cA_Y Y_t\|_{L^1_T(\cB^{0,0})}\lesssim\|\na
Y_t\|_{L_T^2(\cB^{\f{1}{4},\f{1}{4}})}\|Y_t\|_{L_T^2(\cB^{\f{1}{4},\f{1}{4}})}.
\end{split}
\eeq While by \eqref{a12} and \eqref{a13}, we have $\na\cdot
Y=\r(Y)=\p_1Y^2\p_2Y^1-\p_1Y^1\p_2Y^2,$ from which, we deduce
\beq\label{bb13a}
\begin{split}
\na_Y\cdot\p_1^2Y=&\na\cdot\bigl((\cA_Y-I)\p_1^2Y\bigr)+\p_1^2\r(Y)\\
=&\p_1\bigl(\p_1Y^2\p_1\p_2Y^1-\p_1Y^1\p_1\p_2Y^2\bigr)+\p_2(-\p_1Y^2\p_1^2Y^1+\p_1Y^1\p_1^2Y^2). \end{split} \eeq
Applying Lemma \ref{le2.1} and Lemma \ref{L2} leads to
\beq\label{bb13}\begin{aligned}
\|\na(-\D)^{-1}(\na_Y&\cdot\p_1^2Y)\|_{L^1_T(\cB^{0,0})}
\lesssim\|\p_1 Y\|_{L_T^2(\cB^{\f{1}{4},\f{1}{4}})}\|\p_1\na
Y\|_{L_T^2(\cB^{\f{1}{4},\f{1}{4}})}.
\end{aligned}\eeq
Substituting \eqref{bb10} and \eqref{bb13} into \eqref{bb9} and
using Lemma \ref{L1}, we conclude that \beno
\begin{split}
\|\na q\|_{L^1_T(\cB^{0,0})}\lesssim &(1+\|\na Y\|_{L^\infty_T(\dot{B}^1_{2,1})})\|\na Y\|_{L^\infty_T(\dot{B}^1_{2,1})}\|\na q\|_{L^1_T(\cB^{0,0})}\\
&+\|\na Y_t\|_{L_T^2(\cB^{\f{1}{4},\f{1}{4}})}\|Y_t\|_{L_T^2(\cB^{\f{1}{4},\f{1}{4}})}+\|\p_1 Y\|_{L_T^2(\cB^{\f{1}{4},\f{1}{4}})}\|\p_1\na
Y\|_{L_T^2(\cB^{\f{1}{4},\f{1}{4}})}, \end{split} \eeno which
together with \eqref{bb14} ensures \eqref{bb15}. This completes the
proof of Proposition \ref{p2}.
\end{proof}

We now turn to the estimate of $\vv f(Y,q)$ given by \eqref{a13}.
The main result is as follows:

\begin{prop}\label{p3}
{\sl Let $\vv f(Y,q)$ be given by \eqref{a13}. Under the assumptions
of Proposition \ref{p2}, and , in addition,  if \beq\label{A4} \|\na
Y\|_{L^\infty_T(\dot{B}^1_{2,1})}+\|\na
Y\|_{L^\infty_T(\dot{B}^2_{2,1})}\leq 1, \eeq   we have
\beq\label{bb17}\begin{aligned} \|\vv
f&(Y,q)\|_{L^1_T(\cB^{0,0})}\lesssim\|\na
Y\|_{L^\infty_T(\dot{B}^1_{2,1})}\|Y_t\|_{L^1_T(\cB^{\f{3}{2},\f{1}{2}})}+
\|Y_t\|_{L_T^2(\cB^{\f{1}{4},\f{1}{4}})}\|\na Y_t\|_{L_T^2(\cB^{\f{1}{4},\f{1}{4}})}\\
& +\|\p_1 Y\|_{L_T^2(\cB^{\f{1}{4},\f{1}{4}})}\|\p_1\na
Y\|_{L_T^2(\cB^{\f{1}{4},\f{1}{4}})}\\
&+\bigl(\|\p_1 Y\|_{L_T^2(\cB^{\f{1}{4},\f{1}{4}})}
+\|\p_1\na Y\|_{L_T^2(\cB^{\f{1}{4},\f{1}{4}})}\bigr)\bigl(\|\na Y_t\|_{L_T^2(\cB^{\f{1}{4},\f{1}{4}})}+\|\na^2 Y_t\|_{L_T^2(\cB^{\f{1}{4},\f{1}{4}})}\bigr)\\
&+\bigl(\|\p_1 Y\|_{L_T^2(\dot{B}^1_{2,1})}+\|\p_1
Y\|_{L_T^2(\dot{B}^2_{2,1})}\bigr)\|Y_t\|_{L_T^2(\cB^{\f{1}{2},\f{1}{2}})}.
\end{aligned}\eeq}
\end{prop}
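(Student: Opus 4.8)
The strategy is to split $\vv f(Y,q)=(\na_Y\cdot\na_Y-\Delta)Y_t-\na_Yq$ into a pressure part and a second-order ``diffusion'' part and to estimate each with the product laws of Lemmas \ref{L1}, \ref{L2}, \ref{L5} and \ref{L6}, together with the Bony decompositions \eqref{bb5}--\eqref{bb6} and the anisotropic Bernstein inequalities of Lemma \ref{le2.1}. Throughout, one writes $\cA_Y=I+\wt\cA_Y$ with $\wt\cA_Y$ linear in $\na Y$, given explicitly through \eqref{a14a}.

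For the pressure part, write $\na_Yq=\na q+(\cA_Y^T-I)\na q$. The term $\na q$ is estimated in $L^1_T(\cB^{0,0})$ directly by Proposition \ref{p2}, which yields the first two summands on the right-hand side of \eqref{bb17}. For $(\cA_Y^T-I)\na q$, Lemma \ref{L2} gives $\|(\cA_Y^T-I)\na q\|_{L^1_T(\cB^{0,0})}\lesssim\|\na Y\|_{L^\infty_T(\cB^{\f12,\f12})}\|\na q\|_{L^1_T(\cB^{0,0})}$; bounding $\|\na Y\|_{\cB^{\f12,\f12}}\lesssim\|\na Y\|_{\dot{B}^1_{2,1}}$ by Lemma \ref{L1} and using the smallness \eqref{bb14}, this term is again controlled by the Proposition \ref{p2} bound.

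For the diffusion part the algebraic structure of \eqref{a14} is essential. Since the columns of $\cA_Y$ are divergence free ($\sum_i\p_i b_{ij}=0$), one has, componentwise, $(\na_Y\cdot\na_Y-\Delta)Y_t=\na\cdot\bigl((\cA_Y\cA_Y^T-I)\na Y_t\bigr)$, and with $\cA_Y\cA_Y^T-I=\wt\cA_Y+\wt\cA_Y^T+\wt\cA_Y\wt\cA_Y^T$ this becomes a finite sum of terms of schematic form $\na\cdot\bigl((\p Y)\,\na Y_t\bigr)$ plus cubic terms $\na\cdot\bigl((\p Y)(\p Y)\,\na Y_t\bigr)$. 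Reading off the entries of $\wt\cA_Y$ from \eqref{a14a} and using $\det(I+\na Y)=1$ (equivalently $\na\cdot Y=\r(Y)$, which is quadratic), one checks that --- after moving a derivative onto the coefficient where convenient --- each resulting term acquires a factor of the ``good'' type $\p_1Y$, $\p_1\na Y$ or $\na Y^2$ (the quantities that, by the dissipation analysis of Section \ref{sect4}, carry the extra time-integrability measured by the $\cB^{\f14,\f14}$-norms in \eqref{bb17}) --- with the single exception of the genuinely top-order piece $\wt\cA_Y\,\na^2Y_t$. That piece is treated by Bony's decomposition: its low--high paraproduct $T_{\na Y}\na^2Y_t$ is estimated using $\|\na Y\|_{L^\infty}\lesssim\|\na Y\|_{\dot{B}^1_{2,1}}$, which is the origin of the factor $\|\na Y\|_{L^\infty_T(\dot{B}^1_{2,1})}$ paired with $\|Y_t\|_{L^1_T(\cB^{\f32,\f12})}$, while the high--low and high--high parts redistribute one derivative and fall under the $\cB^{\f14,\f14}$-type products handled by Lemma \ref{L2}. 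Applying Lemmas \ref{L1}, \ref{L2} and \ref{le2.1} term by term, together with the smallness \eqref{A4} to absorb the cubic contributions, produces \eqref{bb17}. (The appearance of $\|Y_t\|_{L^1_T(\cB^{\f32,\f12})}$ on the right is harmless: combined with Proposition \ref{p1} and a smallness condition on $\|\na Y\|_{L^\infty_T(\dot{B}^1_{2,1})}$ it will later be absorbed on the left.)

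The main difficulty I anticipate is the bookkeeping in the diffusion part: arranging the rewriting of $(\na_Y\cdot\na_Y-\Delta)Y_t$ --- which genuinely uses the constraint $\det(I+\na Y)=1$, exactly as stressed in the remark following Theorem \ref{T} --- so that a ``good'' factor $\p_1Y$, $\p_1\na Y$ or $\na Y^2$ lands in every term, while keeping the anisotropic regularity indices compatible with the product law $\cB^{s_1,\tau_1}\cdot\cB^{s_2,\tau_2}\hookrightarrow\cB^{s_1+s_2-\f12,\,\tau_1+\tau_2-\f12}$ of Lemma \ref{L2}. The constraint $j\ge k-N_0$ built into the operators $\D_j\D_k^h$, which underlies both Lemma \ref{L1} and the frequency splitting $|\xi|^2\le2|\xi_1|$ versus $|\xi|^2>2|\xi_1|$, has to be invoked repeatedly and is what makes the half-derivative losses in Lemma \ref{L2} close.
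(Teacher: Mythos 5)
Your overall architecture (pressure part via Proposition \ref{p2}; diffusion part via the divergence form $\na\cdot\bigl((\cA_Y\cA_Y^T-I)\na Y_t\bigr)$ plus good-factor bookkeeping) is the same as the paper's, and the pressure half is correct. The gap is in your treatment of the ``top-order piece'' $\wt\cA_Y\,\na^2Y_t$: you propose to bound its low--high paraproduct by $\|\na Y\|_{L^\infty_T(\dot B^1_{2,1})}\,\|Y_t\|_{L^1_T(\cB^{3/2,1/2})}$, but this fails for the component $\p_2^2Y_t$. In the operators $\D_j\D_k^h$ one always has $j\geq k-N_0$, so $\|\D_j\D_k^h\p_2^2Y_t\|_{L^2}\sim2^{2j}\|\D_j\D_k^hY_t\|_{L^2}$ exceeds $2^{3j/2}2^{k/2}\|\D_j\D_k^hY_t\|_{L^2}$ by the unbounded factor $2^{(j-k)/2}$; hence $\p_2^2Y_t$ in $\cB^{0,0}$ is \emph{not} controlled by $\|Y_t\|_{\cB^{3/2,1/2}}$. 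Only terms carrying at least one horizontal derivative of $Y_t$ (costing $2^k\leq C2^{j/2}2^{k/2}$) are compatible with that pairing. This is not cosmetic: it is precisely the anisotropy the whole section is built around.

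The repair is the finer algebra the paper extracts in \eqref{b15}--\eqref{b16}: writing $\bar{\vv f}=\p_1\vv F_1+\p_2\vv F_2$, the coefficient of $\p_2Y_t$ inside $\vv F_2$ is $2\p_1Y^1+|\p_1Y|^2$ (a ``good'' factor), while the ``bad'' coefficient $\p_2Y^1$ only ever multiplies $\p_1Y_t$ or sits under the outer $\p_1$ of $\p_1\vv F_1$. Consequently the dangerous product $(\hbox{coeff})\cdot\p_2^2Y_t$ always carries a dissipation-controlled factor $\p_1Y$ and is estimated with \emph{both} factors in $L^2_T(\cB^{1/4,1/4})$ via the product law of Lemma \ref{L2} (estimates \eqref{bb22} and \eqref{bb25}), which is exactly why those $L^2_T$ products appear on the right of \eqref{bb17}; the terms that may legitimately be paired with $\|Y_t\|_{L^1_T(\cB^{3/2,1/2})}$ are those isolated in Lemmas \ref{L3} and \ref{L4}. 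So your ``single exception'' is mis-identified: the exceptional second-order contributions are only the $\p_1\na Y_t$ ones, and the $\p_2^2Y_t$ contribution must be routed through the good-coefficient $L^2_T\times L^2_T$ estimate rather than the $L^\infty\times L^1_T(\cB^{3/2,1/2})$ one.
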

\begin{proof} Via \eqref{a13},
 we  split $\vv f(Y,q)$ as \beq \label{bb17a}
\begin{split}
& \qquad\qquad\vv f(Y,q)=\bar{\vv f}(Y)+\tilde{\vv f}(Y,q)\quad \mbox{ with}\\
& \bar{\vv
f}(Y)\eqdefa(\na_Y\cdot\na_Y-\Delta)Y_t,\quad\mbox{and}\quad
\tilde{\vv f}(Y,q)\eqdefa-\na_Y q. \end{split}\eeq Notice from the
definition of $\cA_Y=(b_{ij})_{i,j=1,2}$ given by \eqref{a14a} that
\beq\label{b15}\begin{split} \bar{\vv
f}(Y)=\sum_{\ell,m,j=1}^2\p_\ell(b_{\ell j}b_{mj}\p_mY_t)-\Delta
Y_t\eqdefa\p_1\vv F_1+\p_2\vv F_2, \end{split} \eeq with
\beq\label{b16}\begin{aligned}
&\vv F_1=(2\p_2Y^2+|\p_2Y|^2)\p_1Y_t-(1+\p_2Y^2)\p_1Y^2\p_2Y_t-(1+\p_1Y^1)\p_2Y^1\p_2Y_t,\\
&\vv
F_2=(2\p_1Y^1+|\p_1Y|^2)\p_2Y_t-(1+\p_2Y^2)\p_1Y^2\p_1Y_t-(1+\p_1Y^1)\p_2Y^1\p_1Y_t,
\end{aligned}\eeq
and \beq\label{b17} \tilde{\vv f}(Y,q)=-(\mathcal{A}_Y^T-I)\na q-\na
q. \eeq We first observe  from \eqref{bb10} and \eqref{b17} that
\beq\label{bb16} \|\tilde{\vv
f}(Y,q)\|_{L^1_T(\cB^{0,0})}\lesssim(1+\|\na
Y\|_{L^\infty_T(\dot{B}^1_{2,1})})\|\na q\|_{L^1_T(\cB^{0,0})}. \eeq

To deal with $\bar{\vv f}(Y)$, we need the following two lemmas,
their proofs  will be presented in the Appendix \ref{appendb}.

\begin{lem}\label{L3} {\sl Under the assumptions of Proposition \ref{p3}, one has
\beno
\|\p_2(\p_2Y^1\p_1Y_t)\|_{L^1_T(\cB^{0,0})}\lesssim\|\na
Y\|_{L^\infty_T(\dot{B}^1_{2,1})}\|Y_t\|_{L^1_T(\cB^{\f{3}{2},\f{1}{2}})}+\|\p_1
Y^1\|_{L^2_T(\dot{B}^2_{2,1})}\|Y_t\|_{L^2_T(\cB^{\f{1}{2},\f{1}{2}})}.
\eeno}
\end{lem}

\begin{lem}\label{L4} {\sl Under the assumptions of Proposition \ref{p3}, one has
\beno
\|\p_1(\p_2Y^1\p_2Y_t)\|_{L^1_T(\cB^{0,0})}\lesssim\|\p_2
Y^1\|_{L^\infty_T(\dot{B}^1_{2,1})}\|Y_t\|_{L^1_T(\cB^{\f{3}{2},\f{1}{2}})}.
\eeno}
\end{lem}

Now, let us go back to the proof of Proposition \ref{p3}. We first,
 by applying  Lemma \ref{L3}, to get that \beq\label{bb18}\begin{aligned}
&\|\p_2(\p_2Y^1\p_1Y_t)\|_{L^1_T(\cB^{0,0})}+\|\p_2(\p_1Y^2\p_1Y_t)\|_{L^1_T(\cB^{0,0})}\\
&\lesssim\|\na
Y\|_{L^\infty_T(\dot{B}^1_{2,1})}\|Y_t\|_{L^1_T(\cB^{\f{3}{2},\f{1}{2}})}+\|\p_1
Y\|_{L^2_T(\dot{B}^2_{2,1})}\|Y_t\|_{L^2_T(\cB^{\f{1}{2},\f{1}{2}})}.
\end{aligned}\eeq
By the same argument and using the product laws in Lemma \ref{L6},
one has \beq\label{bb23}\begin{aligned}
&\|\p_2(\p_1Y^1\p_2Y^1\p_1Y_t)\|_{L^1_T(\cB^{0,0})}+\|\p_2(\p_2Y^2\p_1Y^2\p_1Y_t)\|_{L^1_T(\cB^{0,0})}\\
&\lesssim\|\p_1Y\p_2
Y\|_{L^\infty_T(\dot{B}^1_{2,1})}\|Y_t\|_{L^1_T(\cB^{\f{3}{2},\f{1}{2}})}+\|\p_1Y\p_2Y\|_{L^2_T(\dot
B^2_{2,1})}
\|Y_t\|_{L^2_T(\cB^{\f12,\f12})}\\
& \lesssim\|\na
Y\|_{L^\infty_T(\dot{B}^1_{2,1})}^2\|Y_t\|_{L^1_T(\cB^{\f{3}{2},\f{1}{2}})}+\bigl(\|\p_2Y\|_{L^\infty_T(\dot
B^2_{2,1})}\|\p_1Y\|_{L^2_T(\dot
B^1_{2,1})}\\
&\qquad+\|\p_2Y\|_{L^\infty_T(\dot B^1_{2,1})}\|\p_1Y\|_{L^2_T(\dot
B^2_{2,1})}\bigr)\|Y_t\|_{L^2_T(\cB^{\f12,\f12})}.
\end{aligned}\eeq
Whereas applying \eqref{bb7} gives  \beq\label{bb22}\begin{aligned}
\|\p_2(\p_1Y^1\p_2Y_t)\|_{L^1_T(\cB^{0,0})}\lesssim&\|\p_1 Y^1\|_{L^2_T(\cB^{\f{1}{4},\f{1}{4}})}\|\p_2^2Y_t\|_{L^2_T(\cB^{\f{1}{4},\f{1}{4}})}\\
&+\|\p_1\p_2
Y^1\|_{L^2_T(\cB^{\f{1}{4},\f{1}{4}})}\|\p_2Y_t\|_{L^2_T(\cB^{\f{1}{4},\f{1}{4}})},
\end{aligned}\eeq
and applying \eqref{bb7} twice leads to
\beq\label{bb25}\begin{aligned}
&\|\p_2(|\p_1Y|^2\p_2Y_t)\|_{L^1_T(\cB^{0,0})}\lesssim\||\p_1Y|^2\|_{L^2_T(\cB^{\f{1}{4},\f{1}{4}})}
\|\p_2^2Y_t\|_{L^2_T(\cB^{\f{1}{4},\f{1}{4}})}\\
&\quad+\|\p_2(|\p_1Y|^2)\|_{L^2_T(\cB^{\f{1}{4},\f{1}{4}})}\|\p_2Y_t\|_{L^2_T(\cB^{\f{1}{4},\f{1}{4}})}\\
&\lesssim\|\na Y\|_{L^\infty_T(\cB^{\f{1}{2},\f{1}{2}})}\bigl(\|\p_1Y\|_{L^2_T(\cB^{\f{1}{4},\f{1}{4}})}+\|\p_1\na Y\|_{L^2_T(\cB^{\f{1}{4},\f{1}{4}})}\bigr)\\
&\qquad\times\bigl(\|\p_2^2
Y_t\|_{L^2_T(\cB^{\f{1}{4},\f{1}{4}})}+\|\p_2Y_t\|_{L^2_T(\cB^{\f{1}{4},\f{1}{4}})}\bigr).
\end{aligned}\eeq

Combining \eqref{bb18}-\eqref{bb25} and using Lemma \ref{L1} and
\eqref{b16}, we obtain \beq\label{bb25ad}\begin{split} &\|\p_2\vv
F_2\|_{L^1_T(\cB^{0,0})}\lesssim(1+\|\na
Y\|_{L_T^\infty(\dot{B}^1_{2,1})})\bigl\{\|\na
Y\|_{L^\infty_T(\dot{B}^1_{2,1})}\|Y_t\|_{L^1_T(\cB^{\f{3}{2},\f{1}{2}})}\\
 &\qquad+\bigl(\|\p_1 Y\|_{L_T^2(\cB^{\f{1}{4},\f{1}{4}})}
+\|\p_1\na Y\|_{L_T^2(\cB^{\f{1}{4},\f{1}{4}})}\bigr)\bigl(\|\p_2^2 Y_t\|_{L_T^2(\cB^{\f{1}{4},\f{1}{4}})}+\|\p_2 Y_t\|_{L_T^2(\cB^{\f{1}{4},\f{1}{4}})}\bigr)\bigr\}\\
&\qquad+\bigl(1+\|\p_2Y\|_{L^\infty_T(\dot{B}^1_{2,1})}+\|\p_2Y\|_{L^\infty_T(\dot{B}^2_{2,1})}\bigr)\\
&\qquad\quad\times\bigl(\|\p_1 Y\|_{L_T^2(\dot{B}^1_{2,1})}+\|\p_1
Y\|_{L_T^2(\dot{B}^2_{2,1})}\bigr)\|Y_t\|_{L_T^2(\cB^{\f{1}{2},\f{1}{2}})}.
\end{split}\eeq

On the other hand, via \eqref{b16}, we deduce from  Lemma \ref{L4}
along with its proof that \beq\label{bb21b} \|\p_1\vv
F_1\|_{L^1_T(\cB^{0,0})}\lesssim(1+\|\na
Y\|_{L^\infty_T(\dot{B}^1_{2,1})})\|\na
Y\|_{L^\infty_T(\dot{B}^1_{2,1})}\|Y_t\|_{L^1_T(\cB^{\f{3}{2},\f{1}{2}})}.
\eeq

Therefore, under the assumptions \eqref{bb14} and \eqref{A4}, we
arrive at \eqref{bb17} by summing up \eqref{bb15}, \eqref{b15},
\eqref{bb16}, \eqref{bb25ad}  and \eqref{bb21b}. This concludes the
proof of Proposition \ref{p3}.
\end{proof}

\subsection{$L^1_T(Lip(\R^2))$ estimate of $Y_t$.}

We now conclude this section with

\begin{prop}\label{p4}
Under the assumption of Proposition \ref{p3}, one has
\beq\label{bb26}\begin{aligned} \|\na
Y_t\|_{L^1_T(L^\infty)}\lesssim&\|
Y_1\|_{\dot{B}^{0}_{2,1}}+\|\p_1Y_0\|_{\dot{B}^0_{2,1}}
+\|Y_0\|_{\dot{B}^2_{2,1}}+\|Y_t\|_{L_T^2(\dot{B}^{\f12}_{2,1})}^2\\
&+\|Y_t\|_{L_T^2(\dot{B}^{\f52}_{2,1})}^2
+\|\p_1Y\|_{L_T^2(\dot{B}^{\f12}_{2,1})}^2+\|\p_1Y\|_{L_T^2(\dot{B}^{2}_{2,1})}^2.
\end{aligned}\eeq
\end{prop}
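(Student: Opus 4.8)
The plan is to chain together Propositions~\ref{p1} and~\ref{p3} (the latter already incorporating Proposition~\ref{p2}), absorb a small term, and then pass from the anisotropic spaces $\cB^{s_1,s_2}$ to the isotropic Besov spaces $\dot{B}^s_{2,1}$. First I would insert the bound \eqref{bb17} for $\vv f(Y,q)$ into the linear estimate \eqref{2.9}. The leading term of \eqref{bb17} is $\|\na Y\|_{L^\infty_T(\dot{B}^1_{2,1})}\,\|Y_t\|_{L^1_T(\cB^{\f32,\f12})}$, which reproduces --- up to the factor $\|\na Y\|_{L^\infty_T(\dot{B}^1_{2,1})}\leq c_0$ furnished by \eqref{bb14} --- the quantity $\|Y_t\|_{L^1_T(\cB^{\f32,\f12})}$ that controls the left side of \eqref{2.9}; hence for $c_0$ small it can be absorbed on the left. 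This leaves
\beno
\|\na Y_t\|_{L^1_T(L^\infty)}&\lesssim& \|Y_t\|_{L^1_T(\cB^{\f32,\f12})}\\
&\lesssim& \|Y_1\|_{\cB^{-\f12,\f12}}+\|\p_1Y_0\|_{\cB^{-\f12,\f12}}+\|\D Y_0\|_{\cB^{-\f12,\f12}}+\cR,
\eeno
where $\cR$ denotes the sum of the four remaining (quadratic) terms in \eqref{bb17}; under the hypothesis \eqref{A4} each $L^\infty_T$ factor occurring there is $\leq 1$, so no $L^\infty_T$ norm of $\na Y$ survives in $\cR$.

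Next I would translate the anisotropic norms into isotropic ones. Lemma~\ref{L1} (applied with $\tau_1=\tau_2=\f14$, with $\tau_1=\tau_2=\f12$, and with $(\tau_1,\tau_2)=(-\f12,\f12)$) and its Chemin--Lerner version from Remark~\ref{rmk3} give the embeddings $\|a\|_{\cB^{\f14,\f14}}\lesssim\|a\|_{\dot{B}^{\f12}_{2,1}}$, $\|a\|_{\cB^{\f12,\f12}}\lesssim\|a\|_{\dot{B}^{1}_{2,1}}$ and $\|a\|_{\cB^{-\f12,\f12}}\lesssim\|a\|_{\dot{B}^{0}_{2,1}}$. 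Combining these with the scaling $\|\na a\|_{\dot{B}^s_{2,1}}\sim\|a\|_{\dot{B}^{s+1}_{2,1}}$ (so that $\|\na Y_t\|_{\dot{B}^{\f12}_{2,1}}\sim\|Y_t\|_{\dot{B}^{\f32}_{2,1}}$, $\|\na^2Y_t\|_{\dot{B}^{\f12}_{2,1}}\sim\|Y_t\|_{\dot{B}^{\f52}_{2,1}}$, $\|\na Y^2\|_{\dot{B}^{\f12}_{2,1}}\sim\|Y^2\|_{\dot{B}^{\f32}_{2,1}}$, $\|\p_1\na Y\|_{\dot{B}^{\f12}_{2,1}}\sim\|\p_1Y\|_{\dot{B}^{\f32}_{2,1}}$ and $\|\D Y_0\|_{\dot{B}^0_{2,1}}\sim\|Y_0\|_{\dot{B}^2_{2,1}}$) turns the data terms into $\|Y_1\|_{\dot{B}^0_{2,1}}+\|\p_1Y_0\|_{\dot{B}^0_{2,1}}+\|Y_0\|_{\dot{B}^2_{2,1}}$, and $\cR$ into a finite sum of products of the norms $\|Y_t\|_{L^2_T(\dot{B}^{\f12}_{2,1})}$, $\|Y_t\|_{L^2_T(\dot{B}^{1}_{2,1})}$, $\|Y_t\|_{L^2_T(\dot{B}^{\f32}_{2,1})}$, $\|Y_t\|_{L^2_T(\dot{B}^{\f52}_{2,1})}$, $\|\p_1Y\|_{L^2_T(\dot{B}^{\f12}_{2,1})}$, $\|\p_1Y\|_{L^2_T(\dot{B}^{\f32}_{2,1})}$, $\|\p_1Y\|_{L^2_T(\dot{B}^{2}_{2,1})}$ and $\|Y^2\|_{L^2_T(\dot{B}^{\f32}_{2,1})}$.

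Finally I would eliminate the intermediate indices $\f32$ and $1$ by interpolation (logarithmic convexity of $\dot{B}^s_{2,1}$ in $s$, then Cauchy--Schwarz in $t$): $\|Y_t\|_{L^2_T(\dot{B}^{\f32}_{2,1})}\lesssim\|Y_t\|_{L^2_T(\dot{B}^{\f12}_{2,1})}^{1/2}\|Y_t\|_{L^2_T(\dot{B}^{\f52}_{2,1})}^{1/2}$, $\|Y_t\|_{L^2_T(\dot{B}^{1}_{2,1})}\lesssim\|Y_t\|_{L^2_T(\dot{B}^{\f12}_{2,1})}^{3/4}\|Y_t\|_{L^2_T(\dot{B}^{\f52}_{2,1})}^{1/4}$ and $\|\p_1Y\|_{L^2_T(\dot{B}^{\f32}_{2,1})}\lesssim\|\p_1Y\|_{L^2_T(\dot{B}^{\f12}_{2,1})}^{1/3}\|\p_1Y\|_{L^2_T(\dot{B}^{2}_{2,1})}^{2/3}$. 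Inserting these and applying Young's inequality to each product in $\cR$ bounds $\cR$ by a constant times $\|Y_t\|_{L^2_T(\dot{B}^{\f12}_{2,1})}^2+\|Y_t\|_{L^2_T(\dot{B}^{\f52}_{2,1})}^2+\|\p_1Y\|_{L^2_T(\dot{B}^{\f12}_{2,1})}^2+\|\p_1Y\|_{L^2_T(\dot{B}^{2}_{2,1})}^2+\|Y^2\|_{L^2_T(\dot{B}^{\f32}_{2,1})}^2$, which together with the data terms is exactly the right-hand side of \eqref{bb26}.

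I expect the delicate step to be the absorption in the first paragraph: it works only thanks to the smallness \eqref{bb14}, and moving $\|\na Y\|_{L^\infty_T(\dot{B}^1_{2,1})}\|Y_t\|_{L^1_T(\cB^{\f32,\f12})}$ to the left presupposes that $\|Y_t\|_{L^1_T(\cB^{\f32,\f12})}$ is already known to be finite. This is why the statement is phrased for sufficiently smooth solutions and why in Subsection~\ref{subsect4.3} it will be invoked along the approximate solutions rather than directly. The only other care needed is bookkeeping the interpolation exponents so that the final bound involves exactly the norms displayed in \eqref{bb26}, with no residual $L^\infty_T$ norms of $\na Y$.
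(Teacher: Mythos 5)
Your proposal is correct and follows essentially the same route as the paper: Proposition~\ref{p1} plus Proposition~\ref{p3}, absorption of the term $\|\na Y\|_{L^\infty_T(\dot{B}^1_{2,1})}\|Y_t\|_{L^1_T(\cB^{\f32,\f12})}$ via the smallness in \eqref{bb14} (which is exactly how the paper arrives at \eqref{bb27}), and then Lemma~\ref{L1} together with \eqref{bb4}, interpolation and Young's inequality to reach \eqref{bb26}. The paper states these steps in three lines; your write-up merely makes explicit the interpolation exponents and the caveat that the absorption presupposes finiteness of $\|Y_t\|_{L^1_T(\cB^{\f32,\f12})}$ for the smooth solution on $[0,T]$.
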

\begin{proof}
Thanks to Propositions \ref{p1} and \ref{p3}, and the assumption
\eqref{bb14}, we infer \beq\label{bb27}\begin{aligned} \|\na
&Y_t\|_{L^1_T(L^\infty)}\lesssim \|
Y_t\|_{L^1_T(\cB^{\f32,\f12})}\lesssim
\|Y_1\|_{\cB^{-\f{1}{2},\f{1}{2}}}+\|\p_1
Y_0\|_{\cB^{-\f{1}{2},\f{1}{2}}}+\|\D
Y_0\|_{\cB^{-\f{1}{2},\f{1}{2}}}
\\
&+
\|Y_t\|_{L_T^2(\cB^{\f{1}{4},\f{1}{4}})}\|\na Y_t\|_{L_T^2(\cB^{\f{1}{4},\f{1}{4}})}+\bigl(\|\p_1 Y\|_{L_T^2(\dot{B}^1_{2,1})}+\|\p_1 Y\|_{L_T^2(\dot{B}^2_{2,1})}\bigr)\|Y_t\|_{L_T^2(\cB^{\f{1}{2},\f{1}{2}})} \\
& +\bigl(\|\p_1 Y\|_{L_T^2(\cB^{\f{1}{4},\f{1}{4}})}+\|\p_1\na
Y\|_{L_T^2(\cB^{\f{1}{4},\f{1}{4}})}
\bigr)\bigl(\|\na Y_t\|_{L_T^2(\cB^{\f{1}{4},\f{1}{4}})}+\|\na^2Y_t\|_{L_T^2(\cB^{\f{1}{4},\f{1}{4}})}\bigr)\\
&+\|\p_1 Y\|_{L_T^2(\cB^{\f{1}{4},\f{1}{4}})}\|\p_1\na
Y\|_{L_T^2(\cB^{\f{1}{4},\f{1}{4}})},
\end{aligned}\eeq
which along with Lemma \ref{L1} and \eqref{bb4} lead to
\eqref{bb26}. This completes the proof of Proposition \ref{p4}.
\end{proof}

\medskip
\section{The proof of Theorem \ref{T}}\label{sect5}
\setcounter{equation}{0}

\subsection{{\it A priori} estimate of \eqref{a14}} With $L^1_T(Lip(\R^2))$ estimate of $Y_t,$
we can now proceed with the energy method.

\begin{prop}\label{prop5.1}
{\sl Let  $Y$ be a sufficiently smooth  solution of \eqref{a12} on
$[0,T].$ Then one has
 \beq \label{b14a}
\begin{split}
&\|\Delta_jY_t\|_{L^\infty_T(L^2)}^2+\|\na\Delta_jY_t\|_{L^\infty_T(L^2)}^2+\|\p_1\Delta_jY\|_{L^\infty_T(L^2)}^2+\|\na\Delta_jY^2\|_{L^\infty_T(L^2)}^2
\\
&\quad +\|\Delta\Delta_jY\|_{L^\infty_T(L^2)}^2
+\|\na\Delta_jY_t\|_{L^2_T(L^2)}^2+\|\D\Delta_jY_t\|_{L^2_T(L^2)}^2\\
&\quad+\|\p_1\na\Delta_jY\|_{L^2_T(L^2)}^2+\|\Delta\Delta_jY^2\|_{L^2_T(L^2)}^2
\\
&\lesssim\|\Delta_jY_1\|_{L^2}^2+\|\na\Delta_jY_1\|_{L^2}^2+\|\p_1\Delta_jY_0\|_{L^2}^2+\|\Delta\Delta_jY_0\|_{L^2}^2+\|\Delta_j\r\|_{L^\infty_T(L^2)}^2\\
&\quad
+\|\na\Delta_j\r\|_{L^2_T(L^2)}^2+\bigl|\int_0^T\bigl(\Delta_j\vv f\
|\ \Delta_jY_t-\f{1}{4}\Delta\Delta_jY-\D\D_jY_t\bigr)\,dt\bigr|.
\end{split}\eeq
}\end{prop}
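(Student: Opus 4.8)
The plan is to apply the dyadic operator $\Delta_j$ to the system \eqref{a12} and run the same energy functional that produced \eqref{1.12ab}, but now localized in frequency and keeping track of the extra $\rho(Y)$ term coming from the constraint $\nabla\cdot Y=\rho(Y)$. Concretely, I would first write $\Delta_j Y = \Delta_j \widetilde Y + \nabla(-\Delta)^{-1}\Delta_j\rho$, where $\nabla\cdot\widetilde Y=0$, so that the divergence-free part $\widetilde Y$ solves the same equation as $Y$ with a modified force and the potential part is controlled directly in terms of $\Delta_j\rho$ and $\nabla\Delta_j\rho$ by the boundedness of $\nabla(-\Delta)^{-1}\nabla$ on $L^2$. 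This is where the $\|\Delta_j\rho\|_{L^\infty_T(L^2)}$ and $\|\nabla\Delta_j\rho\|_{L^2_T(L^2)}$ terms on the right-hand side enter.

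First I would apply $\Delta_j$ to the first equation of \eqref{a12} and take the $L^2$ inner product successively with $\Delta_j Y_t$, with $\Delta\Delta_j Y$, and with $\Delta\Delta_j Y_t$, exactly mirroring the computation \eqref{1.12ab} but at the dyadic level; the $\Delta_j Y_t$ pairing gives control of $\|\Delta_j Y_t\|_{L^2}^2+\|\partial_1\Delta_j Y\|_{L^2}^2$ with dissipation $\|\nabla\Delta_j Y_t\|_{L^2}^2$, the $\Delta\Delta_j Y$ pairing supplies $\|\Delta\Delta_j Y\|_{L^2}^2$ and the crucial $\|\partial_1\nabla\Delta_j Y\|_{L^2_T(L^2)}^2$ term (after absorbing the sign-indefinite cross term $(\Delta_j Y_t\mid\Delta\Delta_j Y)$ into the coercive part, just as in \eqref{2.5}), and the $\Delta\Delta_j Y_t$ pairing adds $\|\nabla\Delta_j Y_t\|_{L^2}^2$ control in $L^\infty_T$ together with the $\|\Delta\Delta_j Y_t\|_{L^2_T(L^2)}^2$ dissipation. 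Summing these three identities with suitable small constant weights (the $\tfrac14$'s and $\tfrac34$'s of \eqref{1.12ab}) and integrating in time over $[0,T]$ produces the left-hand side of \eqref{b14a} up to the $\|\nabla\Delta_j Y^2\|_{L^\infty_T(L^2)}^2$ and $\|\Delta\Delta_j Y^2\|_{L^2_T(L^2)}^2$ pieces, which I would obtain separately: taking the second component of the equation and using that $\partial_1^2 Y^2$ is tied to $\rho(Y)$ through $\nabla_Y\cdot Y_t=0$ (equivalently the identity $\nabla\cdot Y=\rho(Y)$), one gains the stronger $Y^2$ estimates advertised in the third Remark after Theorem \ref{T}; here one pays with $\|\Delta_j\rho\|_{L^\infty_T(L^2)}$ and $\|\nabla\Delta_j\rho\|_{L^2_T(L^2)}$.

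The main obstacle, as usual in these quasilinear energy estimates, is not the linear part but keeping the forcing term honest: the right-hand side of \eqref{b14a} deliberately leaves $\bigl|\int_0^T(\Delta_j\vv f\mid\Delta_j Y_t-\tfrac14\Delta\Delta_j Y-\Delta\Delta_j Y_t)\,dt\bigr|$ unexpanded, so at this stage I do not need to estimate $\vv f(Y,q)$ — that is deferred to the commutator/product-law analysis using Lemmas \ref{L5}, \ref{L6} and the $\|\nabla Y_t\|_{L^1_T(L^\infty)}$ bound of Proposition \ref{p4}. The only genuine care needed inside the proof of Proposition \ref{prop5.1} is the bookkeeping of the constraint: one must verify that the potential part $\nabla(-\Delta)^{-1}\Delta_j\rho$ contributes exactly the $\rho$-norms listed (and no derivative losses), which follows because $\rho(Y)=\partial_1 Y^2\partial_2 Y^1-\partial_1 Y^1\partial_2 Y^2$ is quadratic and $\nabla(-\Delta)^{-1}\nabla$ is an $L^2$-bounded Fourier multiplier, and that the cross term $(\Delta_j Y_t\mid\Delta\Delta_j Y)$ is absorbed as in \eqref{2.5} so the assembled functional is equivalent to the sum of squares on the left. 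Once these two points are checked, summing the three weighted energy identities and moving the $\vv f$-pairing and the $\rho$-contributions to the right yields \eqref{b14a} directly.
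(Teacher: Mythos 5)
Your main computation coincides with the paper's proof: apply $\Delta_j$ to \eqref{a12}, pair the equation with $\Delta_jY_t-\f{1}{4}\D\D_jY-\D\D_jY_t$ (your three pairings summed with weights $1,-\f{1}{4},-1$), check as in \eqref{2.5} that the resulting quadratic functional is equivalent to $\|\D_jY_t\|_{L^2}^2+\|\na\D_jY_t\|_{L^2}^2+\|\p_1\D_jY\|_{L^2}^2+\|\D\D_jY\|_{L^2}^2$, integrate over $[0,T]$, and leave the $\vv f$ pairing unexpanded. The one place you diverge is the Helmholtz splitting $\D_jY=\D_j\wt{Y}+\na(-\D)^{-1}\D_j\r$, and this is both unnecessary and slightly risky. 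The paper extracts the two $Y^2$ terms on the left of \eqref{b14a} from the pointwise consequence of $\na\cdot Y=\r(Y)$, namely $\p_2Y^2=\r-\p_1Y^1$ (note: it is $\p_2Y^2$, not $\p_1^2Y^2$, that is tied to $\r$), which immediately gives $\|\na\D_jY^2\|_{L^\infty_T(L^2)}\lesssim\|\p_1\D_jY\|_{L^\infty_T(L^2)}+\|\D_j\r\|_{L^\infty_T(L^2)}$ and $\|\D\D_jY^2\|_{L^2_T(L^2)}\lesssim\|\p_1\na\D_jY\|_{L^2_T(L^2)}+\|\p_2\D_j\r\|_{L^2_T(L^2)}$, with no Riesz transforms and no recourse to the second component of the equation. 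If you instead run the energy identity on the divergence-free part $\wt{Y}$, the modified force contains $\na(-\D)^{-1}\r_{tt}$ and $\na\r_t$, whose norms do not appear on the right-hand side of \eqref{b14a}; since your second paragraph in fact pairs against $Y$ itself, the argument still closes, but you should either drop the decomposition or use it only as a static, time-by-time bound for the potential part, and in any case state the divergence identity explicitly as the source of the $\r$ terms.
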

\begin{proof}
Applying $\Delta_j$ to \eqref{a12} gives \beq\label{b4} \Delta_j
Y_{tt}-\Delta \Delta_j Y_t-\p_1^2 \Delta_j Y=\Delta_j\vv f, \eeq
from which, and along the same line of the proof of \eqref{2.4}, we
get by taking the $L^2$ inner product of \eqref{b4} with $\Delta_j
Y_t-\f{1}{4}\Delta\Delta_jY-\Delta\Delta_jY_t$ that
\beq\label{b5}\begin{aligned}
&\f{d}{dt}\Bigl\{\f{1}{2}\bigl(\|\Delta_jY_t\|_{L^2}^2+\|\na\Delta_jY_t\|_{L^2}^2+\|\p_1\Delta_jY\|_{L^2}^2+\|\p_1\na\Delta_jY\|_{L^2}^2
+\f{1}{4}\|\Delta\Delta_jY\|_{L^2}^2\bigr)
\\
&\quad-\f{1}{4}(\Delta_jY_t\ |\ \Delta\Delta_jY)\Bigr\}+\f{3}{4}\|\na\Delta_jY_t\|_{L^2}^2+\|\D\Delta_jY_t\|_{L^2}^2+\f{1}{4}\|\p_1\na\Delta_jY\|_{L^2}^2\\
&=\bigl(\Delta_j\vv
f\ |\ \Delta_jY_t-\f{1}{4}\Delta\Delta_jY-\Delta\Delta_jY_t\bigr).
\end{aligned}\eeq

 While it deduces from $\dv Y=\r(Y)$ the followings
\beq\label{b5.c}
\begin{split}
&\|\na\Delta_jY^2\|_{L^\infty_T(L^2)}\lesssim\|\p_1\Delta_jY\|_{L^\infty_T(L^2)}+\|\Delta_j\r\|_{L^\infty_T(L^2)},\\
&\|\Delta\Delta_jY^2\|_{L^2_T(L^2)}\lesssim\|\p_1\na\Delta_jY\|_{L^2_T(L^2)}+\|\Delta_j\p_2\r\|_{L^2_T(L^2)}.
\end{split}\eeq
And similar to \eqref{2.5}, here we have
 \beno
 \begin{split}
 &\f{1}{2}\bigl(\|\Delta_jY_t\|_{L^2}^2+\|\na\Delta_jY_t\|_{L^2}^2+\|\p_1\Delta_jY\|_{L^2}^2+\|\p_1\na\Delta_jY\|_{L^2}^2
+\f{1}{4}\|\Delta\Delta_jY\|_{L^2}^2\bigr)
\\
&\quad-\f{1}{4}(\Delta_jY_t\ |\ \Delta\Delta_jY)
\sim\|\D_jY_t\|_{L^2}^2+\|\na\Delta_jY_t\|_{L^2}^2+\|\p_1\D_jY\|_{L^2}^2+\|\D\D_jY\|_{L^2}^2.
\end{split}
\eeno Hence by integrating \eqref{b5} over $[0,T]$ and using
\eqref{b5.c}, we obtain \eqref{b14a}.
This completes the proof of Proposition \ref{prop5.1}.
\end{proof}

The next proposition is concerned with the {\it a priori} estimate
to the pressure function $q$ in \eqref{a14}.

\begin{prop}\label{p5}
{\sl Let $(Y,q)$ be a smooth enough solution of \eqref{a14} on
$[0,T].$ Then under the assumption  \eqref{bb14}, for any $s>-1$,
we have \beq\label{q2}\begin{aligned}
 \|\na q\|_{L^1_T(\dot{H}^s)}&\lesssim\|Y\|_{L^\infty_T(\dot{H}^{s+2})}\|\na q\|_{L^1_T(L^2)}+\|Y_t\|_{L^2_T(\dot{B}^1_{2,1})}\|Y_t\|_{L^2_T(\dot{H}^{s+1})}\\
&\quad+\|\p_1Y\|_{L^2_T(\dot{B}^1_{2,1})}\|\p_1Y\|_{L^2_T(\dot{H}^{s+1})}
+\|\p_1Y\|_{L^2_T(\dot{H}^{s+1})}\|\p_1\na
Y\|_{L^2_T(L^2)},
\end{aligned}\eeq
and \beq\label{q3}\begin{aligned}
\|\na q\|_{L^2_T(\dot{H}^s)}&\lesssim\|Y\|_{L^\infty_T(\dot{H}^{s+2})}\|\na q\|_{L^2_T(L^2)}+\|Y_t\|_{L^\infty_T(\dot{B}^1_{2,1})}\|Y_t\|_{L^2_T(\dot{H}^{s+1})}\\
&\quad+\|\na
Y\|_{L^\infty_T(\dot{B}^1_{2,1})}\|\p_1Y\|_{L^2_T(\dot{H}^{s+1})}
+\|Y\|_{L^\infty_T(\dot{H}^{s+2})}\|\p_1\na Y\|_{L^2_T(L^2)}.
\end{aligned}\eeq}
\end{prop}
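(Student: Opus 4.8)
The plan is to revisit the elliptic equation for $q$ already derived in the proof of Proposition \ref{p2}. Recall from \eqref{bb8} that
\[
\D q=-\na\cdot\bigl((\cA_Y-I)\cA_Y^T\na q\bigr)-\na\cdot\bigl((\cA_Y^T-I)\na q\bigr)+\na\cdot(\p_t\cA_Y\, Y_t)+\na_Y\cdot\p_1^2Y,
\]
together with the identity \eqref{bb13a}, which, using $\na\cdot Y=\r(Y)$, rewrites $\na_Y\cdot\p_1^2Y$ in pure divergence form with every product carrying at least one factor of the type $\p_1\na Y$. Since all four terms on the right are of the form $\na\cdot(\,\cdot\,)$, we may write $\na q=\na(-\D)^{-1}\na\cdot(\,\cdot\,)$, and $\na(-\D)^{-1}\na\cdot$ is an order-zero Fourier multiplier, hence bounded on $\dot{H}^s(\R^2)$ for every $s\in\R$. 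Thus it suffices to bound the $L^1_T(\dot{H}^s)$ (resp. $L^2_T(\dot{H}^s)$) norm of each of the four source terms.

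For the two terms linear in $\na q$, I would use the product law Lemma \ref{L5}(iii), $\|(\cA_Y^T-I)\na q\|_{\dot{H}^s}\lesssim\|\na Y\|_{L^\infty}\|\na q\|_{\dot{H}^s}+\|\na Y\|_{\dot{H}^{s+1}}\|\na q\|_{L^2}$, and similarly for $(\cA_Y-I)\cA_Y^T\na q$; by the embedding $\dot{B}^1_{2,1}(\R^2)\hookrightarrow L^\infty$ and the smallness hypothesis \eqref{bb14}, the contribution $\|\na Y\|_{L^\infty}\|\na q\|_{\dot{H}^s}$ is absorbed into the left-hand side, while the rest is controlled by $\|Y\|_{L^\infty_T(\dot{H}^{s+2})}\|\na q\|_{L^1_T(L^2)}$ (resp. $\|Y\|_{L^\infty_T(\dot{H}^{s+2})}\|\na q\|_{L^2_T(L^2)}$). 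For $\p_t\cA_Y\, Y_t\sim\na Y_t\otimes Y_t$ I would apply Lemma \ref{L5}(iii) together with Lemma \ref{L6}, placing one factor in a $\dot{B}^1_{2,1}$-type space (again via $\dot{B}^1_{2,1}\hookrightarrow L^\infty$) and the other in $\dot{H}^{s+1}$, which yields the factor $\|Y_t\|_{L^2_T(\dot{B}^1_{2,1})}\|Y_t\|_{L^2_T(\dot{H}^{s+1})}$ (resp. $\|Y_t\|_{L^\infty_T(\dot{B}^1_{2,1})}\|Y_t\|_{L^2_T(\dot{H}^{s+1})}$). Finally, for $\na_Y\cdot\p_1^2Y$ I would split, in each product of \eqref{bb13a}, the low-regularity factor into $\dot{B}^1_{2,1}$ and the high-regularity factor into $\dot{H}^{s+1}$ (or $\dot{H}^{s+2}$), or the reverse; using $\na\cdot Y=\r(Y)$ to trade $\p_2Y^2$ for $-\p_1Y^1$ modulo the quadratic remainder $\r(Y)$ produces precisely the factors $\|\p_1Y\|_{\dot{B}^1_{2,1}}$, $\|\na Y^2\|_{\dot{B}^1_{2,1}}$, $\|\p_1Y\|_{\dot{H}^{s+1}}$, $\|Y^2\|_{\dot{H}^{s+2}}$ and $\|\p_1\na Y\|_{L^2}$ appearing on the right of \eqref{q2}.

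It then remains to integrate in time: for \eqref{q2}, Hölder with exponents $(\infty,1)$ on the two terms linear in $\na q$ and $(2,2)$ on the two genuinely quadratic terms $\p_t\cA_Y\, Y_t$ and $\na_Y\cdot\p_1^2Y$; for \eqref{q3}, the analogous split with $(\infty,2)$ throughout, keeping the low-regularity factor in $L^\infty_T$ of the Besov space and the high-regularity factor in $L^2_T$, so the output lands in $L^2_T(\dot{H}^s)$. The main obstacle is the derivative bookkeeping: the elliptic equation for $q$ sees $\p_1^2Y$, so a naive estimate would cost $\|\na Y\|_{\dot{H}^{s+2}}$, a norm we do not control; the role of the divergence-form identity \eqref{bb13a} together with $\na\cdot Y=\r(Y)$ is exactly to convert this into the admissible factors $\|\p_1Y\|_{\dot{H}^{s+1}}$, $\|Y^2\|_{\dot{H}^{s+2}}$ and $\|\p_1\na Y\|_{L^2}$ that the higher-order energy estimate of Section \ref{sect5} supplies. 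A secondary point is to ensure every quadratic contribution is paired with $L^2_T$-in-time norms rather than $L^1_T$ or $L^\infty_T$ ones, so that these terms close in the final nonlinear estimate.
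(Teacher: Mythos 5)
Your proposal follows essentially the same route as the paper: start from the elliptic equation \eqref{bb8}, use the $\dot H^s$-boundedness of $\na(-\D)^{-1}\na\cdot$, estimate the two terms linear in $\na q$ by the product laws of Lemmas \ref{L5}--\ref{L6} and absorb them via the smallness assumption \eqref{bb14}, treat $\p_t\cA_Y Y_t$ and the rewritten form \eqref{bb13a} of $\na_Y\cdot\p_1^2Y$ as genuinely quadratic terms, and then integrate in time with Hölder exponents $(\infty,1)$ versus $(2,2)$ for \eqref{q2} and the $(\infty,2)$ split for \eqref{q3}. This matches the paper's argument, including the key observation that \eqref{bb13a} converts the dangerous $\p_1^2Y$ contribution into the admissible factors $\|\p_1\na Y\|$, $\|\na Y^2\|$ and $\|\p_1 Y\|$.
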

\begin{proof} We first deduce from
\eqref{bb8} that \beq\label{q1}\begin{aligned}
\|\na q(t)\|_{\dot{H}^s}\leq&\|((\cA_Y-I)\cA_Y^T\na q)(t)\|_{\dot{H}^s}+\|((\cA_Y^T-I)\na q)(t)\|_{\dot{H}^s}\\
&+\|(\p_t\cA_Y
Y_t)(t)\|_{\dot{H}^s}+\|\na(-\D)^{-1}(\na_Y\cdot\p_1^2Y)(t)\|_{\dot{H}^s}.
\end{aligned}\eeq
Thanks to \eqref{a14a}, Lemmas \ref{L5} and \ref{L6}, we see that
for any $s>-1$, \beq\label{b19}\begin{split} \|((\cA_Y-I)\cA_Y^T\na
q)(t)\|_{\dot{H}^s}\lesssim&(1+\|\na Y(t)\|_{\dot{B}^1_{2,1}})
\bigl(\|\na Y(t)\|_{\dot{B}^1_{2,1}}\|\na q(t)\|_{\dot{H}^s}\\
&+\|\na Y(t)\|_{\dot{H}^{s+1}}\|\na q(t)\|_{L^2}\bigr).
\end{split}
\eeq By a similar argument, we obtain that for any $s>-1$,
\beq\label{b18}\begin{split} \|((\cA_Y^T-I)\na
q)(t)\|_{\dot{H}^s}\lesssim& \|\na Y(t)\|_{\dot{B}^1_{2,1}}\|\na
q(t)\|_{\dot{H}^s}+\|\na Y(t)\|_{\dot{H}^{s+1}}\|\na q(t)\|_{L^2},
\end{split}
\eeq
and \beq\label{b21}\begin{split} \|(\p_t\cA_Y
Y_t)(t)\|_{\dot{H}^s}\lesssim
\|Y_t(t)\|_{\dot{B}^1_{2,1}}\|Y_t(t)\|_{\dot{H}^{s+1}}.
\end{split}
\eeq

On the other hand, due to \eqref{bb13a}, we deduce from Lemma
\ref{L5} that for any $s>-1$, \beno\begin{aligned}
\|\na(-\D)^{-1}(\na_Y\cdot\p_1^2Y)(t)\|_{\dot{H}^s}&\lesssim\|(\p_1 Y\p_1\na Y)(t)\|_{\dot{H}^s}\\
&\lesssim\|\p_1 Y(t)\|_{\dot{B}^1_{2,1}}\|\p_1\na Y(t)\|_{\dot{H}^s}
+\|\p_1
Y(t)\|_{\dot{H}^{s+1}}\|\p_1\na Y(t)\|_{L^2}.
\end{aligned}\eeno
This combines  with \eqref{q1}, \eqref{b19}, \eqref{b18},
\eqref{b21} and the assumption \eqref{bb14}  ensures that for any
$s>-1$, \beno\begin{aligned}
\|\na q(t)\|_{\dot{H}^s}&\lesssim\|\na Y(t)\|_{\dot{H}^{s+1}}\|\na q(t)\|_{L^2}+\|Y_t(t)\|_{\dot{B}^1_{2,1}}\|Y_t(t)\|_{\dot{H}^{s+1}}\\
&\quad+\|\p_1Y(t)\|_{\dot{B}^1_{2,1}}\|\p_1\na Y(t)\|_{\dot{H}^s}+\|\p_1Y(t)\|_{\dot{H}^{s+1}}\|\p_1\na
Y(t)\|_{L^2},\quad\text{for}\quad t\in(0,T).
\end{aligned}\eeno
Integrating the above inequality over $(0,T)$ leads to \eqref{q2},
whereas taking its $L^2$ norm with respect to time on $(0,T)$ gives
rise to \eqref{q3}. This proves Proposition \ref{p5}.
\end{proof}

 \subsection{The Estimate to the source terms in \eqref{b14a}} With Propositions \ref{prop5.1} and \ref{p5}, to close the {\it a priori}  estimate for
smooth enough solutions of \eqref{a12}-\eqref{a13}, we need to deal
with the estimate of $ \r$ and $\int_0^T\bigl(\Delta_j\vv f\ |\
\Delta_jY_t-\f{1}{4}\Delta\Delta_jY-\D\D_jY_t\bigr)\,dt$ for $\r,
\vv f$ given by \eqref{a13}. We first present the related estimates
for $\r,$ which is a direct consequence of Lemma \ref{L5} and
\eqref{a13}.

\begin{lem}\label{lem5.2}
{\sl Let $Y$ be a sufficiently  smooth function on
$[0,T]\times\R^2.$ Then for any $s>-1,$
 $\r(Y)=\p_1Y^2\p_2Y^1-\p_1Y^1\p_2Y^2$ satisfies
 \beno
\begin{split}
&\|\Delta_j\r\|_{L^\infty_T(L^2)} \lesssim c_j2^{-js}\bigl(\|\p_2
Y\|_{L^\infty_T(\dot B^1_{2,1})}\|\p_1
Y\|_{L^\infty_T(\dot H^{s})}+\|\p_2 Y\|_{L^\infty_T(\dot
H^{s+1})}\|\p_1Y\|_{L^\infty_T(L^2)}\bigr),\\
&\|\na\Delta_j\r\|_{L^2_T(L^2)} \lesssim c_j2^{-js}\bigl(\|\p_2
Y\|_{L^\infty_T(\dot B^1_{2,1})}\|\p_1 Y\|_{L^2_T(\dot
H^{s+1})}+\|\p_2 Y\|_{L^\infty_T(\dot
H^{s+1})}\|\p_1Y\|_{L^2_T(\dot{B}^1_{2,1})}\bigr).
\end{split}
\eeno}
\end{lem}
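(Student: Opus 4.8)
The plan is to prove Lemma~\ref{lem5.2} by a direct application of the product law Lemma~\ref{L5}~(iii) to the two bilinear terms $\p_1Y^2\p_2Y^1$ and $\p_1Y^1\p_2Y^2$ making up $\r(Y)$, after first writing both factors in a form adapted to the asymmetry between $\p_1$ and $\p_2$. Concretely, for the $L^\infty_T(L^2)$ bound I would freeze time, write $\r(Y) = \p_1Y^2\,\p_2Y^1 - \p_1Y^1\,\p_2Y^2$, and apply Lemma~\ref{L5}~(iii) to each product with $a = \p_2 Y^i$ (placed in $L^\infty$ / $\dot H^{s+1}$) and $b = \p_1 Y^j$ (placed in $\dot H^s$ / $L^2$). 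This gives
\beno
\|\r(Y)\|_{\dot H^s} \lesssim \|\p_2 Y\|_{L^\infty}\|\p_1 Y\|_{\dot H^s} + \|\p_2 Y\|_{\dot H^{s+1}}\|\p_1 Y\|_{L^2},
\eeno
and then replacing $\|\p_2Y\|_{L^\infty}$ by the stronger $\|\p_2Y\|_{\dot B^1_{2,1}}$ (legitimate since $\dot B^1_{2,1}(\R^2)\hookrightarrow L^\infty$), and extracting the dyadic block via Remark~\ref{rmk1}~(2) to produce the factor $c_j 2^{-js}$, yields the first estimate after taking $L^\infty$ in time.

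For the second estimate I would proceed the same way but keep one derivative: since $\na\Delta_j\r$ involves $\na(\p_1Y^j\p_2Y^i)$, I distribute the gradient and again invoke Lemma~\ref{L5}~(iii), this time arranging matters so that the $\p_1$-factor carries the time-integrable $L^2_T$ norm. The natural split is to put $\p_2 Y$ in $L^\infty_T(\dot B^1_{2,1})$ or $L^\infty_T(\dot H^{s+1})$ and $\p_1 Y$ in $L^2_T(\dot H^{s+1})$ or $L^2_T(\dot B^1_{2,1})$ respectively; one then takes the $L^2$ norm in time of the pointwise-in-$t$ product estimate, using H\"older in $t$ to pair the $L^\infty_T$ and $L^2_T$ factors. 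Again one reads off the block $\na\Delta_j\r$ and picks up $c_j2^{-js}$ from the Besov characterization.

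The only genuine point requiring a little care — rather than a true obstacle — is the bookkeeping of which factor is measured in the scaling-critical space $\dot B^1_{2,1}$ versus $L^2$, since in two dimensions $\dot B^1_{2,1}$ is exactly the algebra-type endpoint that makes Lemma~\ref{L5}~(iii) applicable for all $s>-1$ without losing derivatives on the low-regularity factor; one must consistently assign the $\dot B^1_{2,1}$ (equivalently $L^\infty$-controlling) role to $\p_2 Y$ and the $\dot H^s$-type role to $\p_1 Y$, because this is precisely the allocation that later (in closing the energy estimate for \eqref{a12}-\eqref{a13}) exploits the extra $L^2_T(\dot H^{s+1})$ decay available for $\p_1 Y$ but not for $\p_2 Y$. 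Apart from that, the proof is a routine unwinding of the product law together with the equivalences in Remark~\ref{rmk1}, and I would simply note that it "follows directly from Lemma~\ref{L5} and \eqref{a13}" as the statement itself already advertises.
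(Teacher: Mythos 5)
Your argument is correct and coincides with the paper's, which gives no proof beyond declaring the lemma a direct consequence of Lemma \ref{L5} and \eqref{a13}: one applies Lemma \ref{L5} (iii) (resp.\ (i), after using $\dot B^1_{2,1}(\R^2)\hookrightarrow L^\infty$) to each bilinear term of $\r(Y)$, uses Bernstein to trade $\na\D_j$ for $2^j\D_j$, and reads off the factor $c_j2^{-js}$ from the dyadic characterization in Remark \ref{rmk1}. The only point worth making explicit is that $c_j$ must be chosen uniformly in $t$ (the $\wt L^\infty_T$ subtlety), which is why one really estimates the blocks $\D_j(\p_1Y^i\p_2Y^k)$ via Bony's decomposition with each factor already measured in its $L^\infty_T$ or $L^2_T$ norm --- a routine adjustment of exactly the computation you describe.
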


\begin{lem}\label{lem5.3}
{\sl Let $(Y,q)$ be a smooth enough solution of
\eqref{a12}-\eqref{a13}. For $s>-1$, we assume, in addition,  that
\beq\label{A5} \|\na
Y\|_{L^\infty_T(\dot{B}^1_{2,1})}+\|Y\|_{L^\infty_T(\dot{H}^{s+2})}\leq
1, \eeq then  one has
 \beq\label{b20}\begin{aligned}
\bigl|&\int_0^T\bigl(\Delta_j\vv f\ |\
\Delta_jY_t\bigr)\,dt\bigr|\lesssim c_j^22^{-2js} \Bigl\{\bigl(\|\na
q\|_{L^1_T(\dot{H}^s)}+\|\na
q\|_{L^1_T(L^2)}\bigr)\|Y_t\|_{\wt{L}^\infty_T(\dot{H}^s)}\\
&\qquad +\bigl(\|\na
Y\|_{L^\infty_T(\dot{B}^1_{2,1})}+\|Y_t\|_{L^2_T(\dH^1)}\bigr)\bigl(\|Y\|_{L^\infty_T(\dot{H}^{s+2})}^2+
\|Y_t\|_{L^2_T(\dot{H}^{s+1})}^2\bigr)\Bigr\}.
\end{aligned}\eeq}
\end{lem}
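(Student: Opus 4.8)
The plan is to decompose $\vv f(Y,q)=\bar{\vv f}(Y)+\tilde{\vv f}(Y,q)$ exactly as in \eqref{bb17a}, with $\bar{\vv f}(Y)=(\na_Y\cdot\na_Y-\Delta)Y_t$ put in the divergence form $\bar{\vv f}(Y)=\p_1\vv F_1+\p_2\vv F_2$ of \eqref{b15}--\eqref{b16}, and $\tilde{\vv f}(Y,q)=-\na_Yq=-(\cA_Y^T-I)\na q-\na q$ as in \eqref{b17}, and then to estimate the contributions of $\bar{\vv f}$ and $\tilde{\vv f}$ to $\int_0^T(\Delta_j\vv f\ |\ \Delta_jY_t)\,dt$ separately, keeping in mind that the target factor $c_j^22^{-2js}$ must emerge as a product of two generic $\ell^2(\Z)$ sequences. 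Throughout, $c_j$ denotes such a sequence, possibly changing from line to line.

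For the pressure term I would write $\bigl|\int_0^T(\Delta_j\tilde{\vv f}\ |\ \Delta_jY_t)\,dt\bigr|\le\|\Delta_jY_t\|_{L^\infty_T(L^2)}\int_0^T\|\Delta_j\na_Yq(t)\|_{L^2}\,dt$. Since $2^{js}\|\Delta_jY_t\|_{L^\infty_T(L^2)}=c_j\|Y_t\|_{\widetilde L^\infty_T(\dot H^s)}$, it remains to bound $2^{js}\int_0^T\|\Delta_j\na_Yq\|_{L^2}\,dt$ by $c_j(\|\na q\|_{L^1_T(\dot H^s)}+\|\na q\|_{L^1_T(L^2)})$. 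Splitting $\na_Yq=\na q+(\cA_Y^T-I)\na q$, the first piece contributes $2^{js}\int_0^T\|\Delta_j\na q\|_{L^2}\,dt$, and the second, by Lemma \ref{L5}(iii) together with \eqref{A5} (which gives $\|\cA_Y-I\|_{L^\infty}\lesssim\|\na Y\|_{\dot B^1_{2,1}}\lesssim1$ and $\|\cA_Y-I\|_{\dot H^{s+1}}\lesssim\|\na Y\|_{\dot H^{s+1}}\lesssim\|Y\|_{\dot H^{s+2}}\lesssim1$), satisfies $\|\Delta_j((\cA_Y^T-I)\na q)(t)\|_{L^2}\lesssim c_j(t)2^{-js}(\|\na q(t)\|_{\dot H^s}+\|\na q(t)\|_{L^2})$. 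A Cauchy--Schwarz in $t$ with the weight $\|\na q(t)\|_{\dot H^s}+\|\na q(t)\|_{L^2}$ then converts the time-dependent coefficients $c_j(t)$ into a fixed $c_j\in\ell^2(\Z)$, and one obtains the first line on the right-hand side of \eqref{b20}.

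For $\bar{\vv f}$, after applying $\Delta_j$ and integrating by parts in $y$,
$$\int_0^T(\Delta_j\bar{\vv f}\ |\ \Delta_jY_t)\,dt=-\sum_{m=1,2}\int_0^T(\Delta_j\vv F_m\ |\ \p_m\Delta_jY_t)\,dt\lesssim\sum_{m=1,2}\|\Delta_j\vv F_m\|_{L^2_T(L^2)}\,\|\Delta_j\na Y_t\|_{L^2_T(L^2)},$$
where $\|\Delta_j\na Y_t\|_{L^2_T(L^2)}=c_j2^{-js}\|Y_t\|_{L^2_T(\dot H^{s+1})}$ and $\|\Delta_j\vv F_m\|_{L^2_T(L^2)}=c_j2^{-js}\|\vv F_m\|_{L^2_T(\dot H^s)}$ (directly, since the time norm is $L^2$ and commutes with the dyadic sum). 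It then suffices to prove
$$\|\vv F_m\|_{L^2_T(\dot H^s)}\lesssim\bigl(\|\na Y\|_{L^\infty_T(\dot B^1_{2,1})}+\|Y_t\|_{L^2_T(\dot H^1)}\bigr)\bigl(\|Y\|_{L^\infty_T(\dot H^{s+2})}+\|Y_t\|_{L^2_T(\dot H^{s+1})}\bigr).$$
Each summand of $\vv F_1,\vv F_2$ in \eqref{b16} is a product of a factor built from $\na Y$ (which by \eqref{A5} is $\lesssim1$ in $L^\infty_{T,x}$ and $\lesssim\|Y\|_{L^\infty_T(\dot H^{s+2})}$ in $L^\infty_T(\dot H^{s+1})$) with one derivative of $Y_t$, so Lemma \ref{L5}(iii) (and Lemmas \ref{L5}(i), \ref{L6} for the cubic summands, where an extra factor $\|\na Y\|_{L^\infty}\le1$ is simply dropped) bounds it by $\|\na Y\|_{L^\infty_T(\dot B^1_{2,1})}\|Y_t\|_{L^2_T(\dot H^{s+1})}+\|Y\|_{L^\infty_T(\dot H^{s+2})}\|\na Y_t\|_{L^2_T(L^2)}$, and $\|\na Y_t\|_{L^2_T(L^2)}=\|Y_t\|_{L^2_T(\dot H^1)}$. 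Multiplying the resulting bound for $\|\vv F_m\|_{L^2_T(\dot H^s)}$ by $\|Y_t\|_{L^2_T(\dot H^{s+1})}$ and applying Young's inequality to the cross term $\|Y\|_{L^\infty_T(\dot H^{s+2})}\|Y_t\|_{L^2_T(\dot H^{s+1})}$ then yields the second line of \eqref{b20}.

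The step I expect to be the main obstacle is the verification of $\|\vv F_m\|_{L^2_T(\dot H^s)}\lesssim(\cdots)$: each of the half-dozen distinct products in \eqref{b16} has to be placed in exactly the right Lebesgue-in-time / Sobolev-in-space slots so that \eqref{A5} absorbs the cubic contributions and the various cross terms recombine precisely into the form $(\|\na Y\|_{L^\infty_T(\dot B^1_{2,1})}+\|Y_t\|_{L^2_T(\dot H^1)})(\|Y\|_{L^\infty_T(\dot H^{s+2})}^2+\|Y_t\|_{L^2_T(\dot H^{s+1})}^2)$; one must also check that the product estimates of Lemma \ref{L5} are used only in their stated range $s>-1$ and that the Cauchy--Schwarz-in-time arguments keep all the sequences $c_j$ in $\ell^2(\Z)$.
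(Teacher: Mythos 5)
Your proposal is correct and follows essentially the same route as the paper: the same splitting $\vv f=\bar{\vv f}+\tilde{\vv f}$, a direct pairing with $\Delta_jY_t$ plus Lemma \ref{L5}(iii) and \eqref{A5} for the pressure part, and integration by parts followed by Cauchy--Schwarz in time and the product laws of Lemmas \ref{L5}--\ref{L6} to bound $\|\vv F_m\|_{L^2_T(\dot H^s)}$ for the part coming from $(\na_Y\cdot\na_Y-\Delta)Y_t$. The paper's own intermediate bound is $\bigl(\|\na Y\|_{L^\infty_T(\dot{B}^1_{2,1})}\|Y_t\|_{L^2_T(\dot{H}^{s+1})}+\|Y\|_{L^\infty_T(\dot{H}^{s+2})}\|Y_t\|_{L^2_T(\dot H^1)}\bigr)\|Y_t\|_{L^2_T(\dot{H}^{s+1})}$, which recombines into \eqref{b20} by Young's inequality exactly as you indicate.
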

\begin{proof} For $\tilde{\vv f}$ given by
 \eqref{b17}, it follows from Lemma \ref{L5} that \beq\label{b20ad}
\|\tilde{\vv f}\|_{L^1_T(\dot{H}^s)}\lesssim(1+\|\na
Y\|_{L^\infty_T(\dot{B}^1_{2,1})})\|\na q\|_{L^1_T(\dot{H}^s)}+\|\na
Y\|_{L^\infty_T(\dot{H}^{s+1})}\|\na q\|_{L^1_T(L^2)},\eeq which
along with \eqref{A5} implies  for any $s>-1$ \beq\label{c3}
\bigl|\int_0^T\bigl(\Delta_j\tilde{\vv f}\ |\
\Delta_jY_t\bigr)\,dt\bigr|\lesssim c_j^22^{-2js} \bigl(\|\na
q\|_{L^1_T(\dot{H}^s)}+\|\na
q\|_{L^1_T(L^2)}\bigr)\|Y_t\|_{\wt{L}^\infty_T(\dot{H}^s)}. \eeq

 While via \eqref{b15}, we get by using
integration by parts that \beno \int_0^T(\Delta_j\bar{\vv f}\ |\
\Delta_jY_t)\,dt=-\sum_{i=1}^2\int_0^T(\Delta_j\vv F_i\ |\
\Delta_j\p_iY_t)_{L^2}\,dt, \eeno with $\vv F_1, \vv F_2$  given by
\eqref{b16}. For any $s>-1,$ applying Lemma \ref{L5}, one has \beno
\|\p_2Y^1\p_1Y_t\|_{L^2_T(\dot{H}^s)}\lesssim
\|\p_2Y^1\|_{L^\infty_T(\dot{B}^1_{2,1})}\|\p_1Y_t\|_{L^2_T(\dot{H}^s)}
+\|\p_2Y^1\|_{L^\infty_T(\dot{H}^{s+1})}\|\p_1Y_t\|_{L^2_T(L^2)}.
\eeno Applying Lemma \ref{L5} twice and Lemma \ref{L6}, it leads to
 \beno
 \begin{split}
 \|\p_2&Y^2\p_1Y^2\p_2Y_t\|_{L^2_T(\dot{H}^s)}\\
 \lesssim & \|\p_2Y^2\p_1Y^2\|_{L^\infty_T(\dot{B}^1_{2,1})}\|\p_2Y_t\|_{L^2_T(\dot{H}^s)}
 +\|\p_2Y^2\p_1Y^2\|_{L^\infty_T(\dot{H}^{s+1})}\|\p_2Y_t\|_{L^2_T(L^2)}\\
 \lesssim &
 \|\na Y^2\|_{L^\infty_T(\dot{B}^1_{2,1})}\bigl(\|\na Y^2\|_{L^\infty_T(\dot{B}^1_{2,1})}\|\p_2Y_t\|_{L^2_T(\dot{H}^s)}+
 \|\na Y^2\|_{L^\infty_T(\dot{H}^{s+1})}
 \|\p_2Y_t\|_{L^2_T(L^2)}\bigr).
 \end{split}
 \eeno
Similar estimates hold for the other terms in $\vv F_1,\vv F_2$
given by \eqref{b16}. Therefore, under the assumption \eqref{A5}, we
obtain \beno
\begin{split}
\bigl|\int_0^T(\Delta_j\bar{\vv f}\ |\
\Delta_jY_t)\,dt\bigr|\lesssim & c_j^22^{-2js} \bigl\{\bigl(\|\na
Y\|_{L^\infty_T(\dot{B}^1_{2,1})}\|
Y_t\|_{L^2_T(\dot{H}^{s+1})}\\
&+\|Y\|_{L^\infty_T(\dot{H}^{s+2})}\|Y_t\|_{L^2_T(\dH^1)}\bigr)
\|Y_t\|_{L^2_T(\dot{H}^{s+1})}\bigr\}.
\end{split}
\eeno This together with \eqref{bb17a} and \eqref{c3} implies
\eqref{b20}. We complete the proof of this Lemma.
\end{proof}

\begin{lem}\label{lem5.5}
{\sl Let $(Y,q)$ be a smooth enough solution of
\eqref{a12}-\eqref{a13}. For $s>-1$, we assume  also that \eqref{A4}
and \eqref{A5},  then one has \beq\label{b31}\begin{aligned}
&\bigl|\int_0^T(\Delta_j\vv f\ |\ \Delta\Delta_j Y_t)\,dt\bigr|
\lesssim
c_j^22^{-2js}\Bigl\{\bigl(\|\na q\|_{L^2_T(\dot{H}^s)}+\|\na q\|_{L^2_T(L^2)}\bigr)\|Y_t\|_{L^2_T(\dot{H}^{s+2})}\\
&\qquad+\bigl(\|\na Y\|_{L^\infty_T(\dot{B}^1_{2,1})}+\|\na
Y_t\|_{L^2_T(\dot{B}^1_{2,1})}\bigr)\bigl(\|Y\|_{L^\infty_T(\dot{H}^{s+2})}^2+
\| Y_t\|_{L^2_T(\dot{H}^{s+2})}^2\bigr)\Bigr\},
\end{aligned}\eeq
and \beq\label{b30}\begin{aligned} &\bigl|\int_0^T(\Delta_j\vv f\ |\
\Delta\Delta_j Y)\,dt\bigr| \lesssim c_j^22^{-2js}\Bigl\{\bigl(\|\na
q\|_{L^1_T(\dot{H}^s)}+\|\na q\|_{L^1_T(L^2)}\bigr)\|Y\|_{\wt{L}^\infty_T(\dot{H}^{s+2})}\\
&\quad +\bigl(\|\p_1 Y_t^1\|_{L^1_T(L^\infty)}+\|\p_1\na
Y\|_{L^2_T(\dot{B}^1_{2,1})}+\|\p_1
Y\|_{L^2_T(\dot{B}^1_{2,1})}+\|\na
Y\|_{L^\infty_T(\dot{B}^1_{2,1})}\\
&\qquad+\|\na
Y_t\|_{L^2_T(\dot{B}^1_{2,1})}\bigr)
\bigl(\|\p_1Y\|_{L^2_T(\dot{B}^1_{2,1})}^2+\|Y^2\|_{L^2_T(\dot{H}^{s+2})}^2+\|Y\|_{\wt{L}^\infty_T(\dot{H}^{s+2})}^2
\\
&\qquad+\|\p_1Y\|_{L^2_T(\dot{H}^{s+1})}^2
+\|Y_t\|_{L^2_T(\dot{H}^{s+1})}^2+\|Y_t\|_{L^2_T(\dot{H}^{s+2})}^2\bigr) \Bigr\}.
\end{aligned}\eeq
}
\end{lem}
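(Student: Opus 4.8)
\textbf{Proof plan for Lemma \ref{lem5.5}.}

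The plan is to estimate $\int_0^T(\Delta_j\vv f\mid \Delta\Delta_j Y_t)\,dt$ and $\int_0^T(\Delta_j\vv f\mid \Delta\Delta_j Y)\,dt$ by mimicking the argument of Lemma \ref{lem5.3}, using the splitting $\vv f=\bar{\vv f}(Y)+\tilde{\vv f}(Y,q)$ from \eqref{bb17a} together with the structure \eqref{b15}--\eqref{b17} of $\bar{\vv f}$ and $\tilde{\vv f}$. For the first bound \eqref{b31}, I would first treat the pressure term $\tilde{\vv f}=-(\cA_Y^T-I)\na q-\na q$: by Lemma \ref{L5} one has $\|\tilde{\vv f}\|_{L^2_T(\dot H^s)}\lesssim(1+\|\na Y\|_{L^\infty_T(\dot B^1_{2,1})})\|\na q\|_{L^2_T(\dot H^s)}+\|\na Y\|_{L^\infty_T(\dot H^{s+1})}\|\na q\|_{L^2_T(L^2)}$, and then Cauchy--Schwarz in $t$ against $\Delta\Delta_jY_t$ (whose time norm is $\|Y_t\|_{L^2_T(\dot H^{s+2})}$ after peeling off $c_j2^{-js}$ via Remark \ref{rmk1}(2)), invoking \eqref{A5} to absorb the factor $1+\|\na Y\|_{L^\infty_T(\dot B^1_{2,1})}$. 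For the term $\bar{\vv f}=\p_1\vv F_1+\p_2\vv F_2$ I would integrate by parts to move one derivative onto $\Delta\Delta_jY_t$, getting $\sum_i\int_0^T(\Delta_j\vv F_i\mid \Delta\Delta_j\p_iY_t)\,dt$, then estimate each $\vv F_i$ in $L^2_T(\dot H^{s+1})$ by Lemma \ref{L5} and Lemma \ref{L6} exactly as in Lemma \ref{lem5.3} (each $\vv F_i$ is a sum of products of two or three first-order derivatives of $Y$ with a first-order derivative of $Y_t$), pairing against $\|\na Y_t\|_{L^2_T(\dot H^{s+1})}\sim\|Y_t\|_{L^2_T(\dot H^{s+2})}$; the quadratic and cubic terms produce the factors $\|\na Y\|_{L^\infty_T(\dot B^1_{2,1})}$ and, from the $L^2_T$ placement of one $\na Y_t$ factor when it arises, $\|\na Y_t\|_{L^2_T(\dot B^1_{2,1})}$, which is why \eqref{A4} is needed here (to control $\|\na Y\|_{L^\infty_T(\dot B^2_{2,1})}$-type factors appearing when Leibniz-expanding products at regularity $s+1$).

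For the second bound \eqref{b30}, the pairing is against $\Delta\Delta_jY$, so I would use the $L^1_T(\dot H^s)$ estimate of $\vv f$ (rather than $L^2_T$) times $\|Y\|_{\wt L^\infty_T(\dot H^{s+2})}$. The pressure part is handled by \eqref{b20ad}. For $\bar{\vv f}=\p_1\vv F_1+\p_2\vv F_2$, the key point — and the reason the right-hand side of \eqref{b30} is more elaborate — is that here we cannot afford to lose an $L^2_T$ derivative on $Y_t$ the way we could in \eqref{b31}; instead, after integrating by parts in the $\p_1$ direction one exploits that $\p_1\vv F_1$ carries an extra $\p_1$ on some factor (this is where $\|\p_1Y\|_{L^2_T(\dot B^1_{2,1})}$, $\|\p_1\na Y\|_{L^2_T(\dot B^1_{2,1})}$ and $\|Y^2\|_{L^2_T(\dot H^{s+2})}$ enter, the last via $\na\cdot Y=\r(Y)$ giving better control on $Y^2$), while the term $\p_2(\p_2Y^1\p_1Y_t)$ and the like are controlled by Lemmas \ref{L3}--\ref{L4} — i.e.\ the proof reuses the $L^1_T(\cB^{0,0})$ machinery of Section \ref{sect4}, except now at the $\dot H^s$ level, so one also picks up the $\|\p_1Y_t^1\|_{L^1_T(L^\infty)}$ factor from the part where the transport-type structure forces a Lipschitz bound on $\p_1 Y_t^1$. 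Concretely I would: (i) write $\int_0^T(\Delta_j\bar{\vv f}\mid\Delta\Delta_jY)\,dt$, (ii) for the $\p_1\vv F_1$ piece integrate by parts once in $y_1$ and estimate $\|\vv F_1\|_{L^1_T(\dot H^{s+1})}$, (iii) for the $\p_2\vv F_2$ piece integrate by parts in $y_2$ and bound $\|\vv F_2\|_{L^1_T(\dot H^{s+1})}$ using the product laws, always placing the factor with the most derivatives in $L^2_T$ and the rest in $L^\infty_T(\dot B^1_{2,1})$ (which is $\le 1$ by \eqref{A4}--\eqref{A5}), and (iv) collect the resulting quadratic expression, using Lemma \ref{L1}/\eqref{bb4} to pass between $\cB$-norms and $\dot B^s_{2,1}$/$\dot H^s$ norms where convenient.

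The main obstacle I anticipate is bookkeeping in \eqref{b30}: one must carefully track which derivative lands on which factor so that every product sits in a space where the product law (Lemma \ref{L5} or \ref{L6}) applies with at most one factor in the "rough" slot, and so that the three-linear terms $|\p_1Y|^2\p_2Y_t$ etc.\ do not cost more than $\|\na Y\|_{L^\infty_T(\dot B^1_{2,1})}^2$ times the linear estimate. The subtle cases are precisely the ones singled out as Lemmas \ref{L3} and \ref{L4} — the terms $\p_2(\p_2Y^1\p_1Y_t)$ and $\p_1(\p_2Y^1\p_2Y_t)$ — where naive estimation loses a vertical derivative on $Y_t$; these must be routed through the anisotropic $\cB^{s_1,s_2}$ spaces (or, at the $\dot H^s$ level, through the identity $\na\cdot Y=\r(Y)$ that converts $\p_2Y^1$ into quantities controlled by $\p_1Y$ and $Y^2$), which is why the conclusion \eqref{b30} mixes $\dot H$-norms with the $\|\p_1Y_t^1\|_{L^1_T(L^\infty)}$ term coming from Proposition \ref{p4}. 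Everything else is a routine, if lengthy, application of Littlewood--Paley product estimates.
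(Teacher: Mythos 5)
Your overall architecture matches the paper's: split $\vv f=\bar{\vv f}+\tilde{\vv f}$ as in \eqref{bb17a}, handle the pressure part by the product laws of Lemma \ref{L5} (giving the first line of each of \eqref{b31} and \eqref{b30}), prove \eqref{b31} by putting $\bar{\vv f}$ in $L^2_T(\dot{H}^s)$ via $\|\vv F_i\|_{L^2_T(\dot{H}^{s+1})}$ against $\|Y_t\|_{L^2_T(\dot{H}^{s+2})}$, and for \eqref{b30} exploit the better $L^2_T$ control of $Y^2$ (pairing $\bar f^2$ against $\Delta\Delta_jY^2$, which lies in $L^2_T(\dot{H}^s)$) and of $\p_1Y$ (integrating by parts to pair $\vv F_1^1$ against $\p_1\na\Delta_jY^1$). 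You also correctly identify the crux --- the term $\p_2(\p_2Y^1\p_1Y_t^1)$ inside $\p_2F_2^1$ tested against $\Delta\Delta_jY^1$ --- and the norms $\|\p_1Y_t^1\|_{L^1_T(L^\infty)}$ and $\|\p_1\na Y\|_{L^2_T(\dot{B}^1_{2,1})}$ that must absorb it.

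However, the mechanism you propose for that crux term is where the gap lies. Your step (iii) --- ``integrate by parts in $y_2$ and bound $\|\vv F_2\|_{L^1_T(\dot{H}^{s+1})}$ by the product laws'' --- cannot work for the factor $\p_2Y^1\p_1Y_t^1$: $\p_2Y^1$ is controlled only in $L^\infty_T$ (the identity $\na\cdot Y=\r(Y)$ converts $\p_2Y^2$, not $\p_2Y^1$, into $\p_1Y^1$ plus quadratic terms, so it does not help here), and $Y_t$ has no $L^1_T(\dot{H}^{s+2})$ control, so no H\"older pairing in time places this product in $L^1_T(\dot{H}^{s+1})$. Nor is the fix the anisotropic machinery of Lemmas \ref{L3}--\ref{L4}: those serve the $\cB^{0,0}$ estimate of Proposition \ref{p3} and are not invoked in this lemma. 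What the paper actually does is split $F_2^1=G^1+G^2$ with $G^2=-(1+\p_1Y^1)\p_2Y^1\p_1Y_t^1$, apply the isotropic Bony decomposition \eqref{bb5} to $\p_2Y^1\p_1Y_t^1$, integrate by parts in $y_1$ inside the paraproduct $T_{\p_2Y^1}\p_1Y_t^1$ so that the $\p_1$ lands either on $\p_2Y^1$ (producing $\|\p_1\p_2Y^1\|_{L^2_T(\dot{B}^1_{2,1})}$, which is controlled) or on the test function (producing $\|\p_1\na Y^1\|_{L^2_T(\dot{H}^s)}$), and reserve $\|\p_1Y_t^1\|_{L^1_T(L^\infty)}$ for the remainder $R(\p_2Y^1,\p_1Y_t^1)$ and the paraproduct $T_{\p_1Y_t^1}\p_2Y^1$, where the low frequencies sit on $\p_1Y_t^1$. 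Without this integration-by-parts-inside-the-paraproduct device, your plan has no way to close the estimate for this one term, and hence for \eqref{b30}.
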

\begin{proof}
Thanks to \eqref{b16} and \eqref{b17}, we get by using Lemma
\ref{L5},   \eqref{A4} and \eqref{A5} that for any $s>-1$,
\beq\label{c17}\begin{aligned}
\|\bar{\vv f}\|_{L^2_T(\dot{H}^s)}&\lesssim\sum_{i=1}^2\|\vv F_i\|_{L^2_T(\dot{H}^{s+1})}\\
&\lesssim\|\na Y\|_{L^\infty_T(\dot{B}^1_{2,1})}\|\na
Y_t\|_{L^2_T(\dot{H}^{s+1})} +\|\na
Y\|_{L^\infty_T(\dot{H}^{s+1})}\|\na Y_t\|_{L^2_T(\dot{B}^1_{2,1})},
\end{aligned}\eeq
and
\beq\label{c18}\begin{aligned}
\|\tilde{\vv
f}\|_{L^2_T(\dot{H}^s)}&\lesssim(1+\|\na
Y\|_{L^\infty_T(\dot{B}^1_{2,1})})\|\na
q\|_{L^2_T(\dot{H}^s)}+\|\na Y\|_{L^\infty_T(\dot{H}^{s+1})}\|\na q\|_{L^2_T(L^2)}\\
&\lesssim\|\na q\|_{L^2_T(\dot{H}^s)}+\|\na q\|_{L^2_T(L^2)},
\end{aligned}\eeq
which together with \eqref{bb17a} implies \eqref{b31}.

On the other hand, we infer from \eqref{A4}, \eqref{A5},
\eqref{b20ad}, \eqref{c17} and  \eqref{c18} that for any $s>-1$
\beq\label{b24a}\begin{aligned} &\bigl|\int_0^T(\Delta_j\tilde{\vv
f}\ |\ \Delta\Delta_j Y)\,dt\bigr|\lesssim c_j^22^{-2js}\bigl(\|\na
q\|_{L^1_T(\dot{H}^s)}+\|\na q\|_{L^1_T(L^2)}\bigr)\|\Delta
Y\|_{\wt{L}^\infty_T(\dot{H}^s)},
\end{aligned}\eeq
 \beq\label{b24}\begin{aligned}
&\bigl|\int_0^T(\Delta_j\bar{f}^2\ |\ \Delta\Delta_j
Y^2)\,dt\bigr|\lesssim c_j^22^{-2js}
\bigl(\|\na Y\|_{L^\infty_T(\dot{B}^1_{2,1})}\|\na Y_t\|_{L^2_T(\dot{H}^{s+1})}\\
&\qquad+\|\na Y\|_{L^\infty_T(\dot{H}^{s+1})}\|\na
Y_t\|_{L^2_T(\dot{B}^1_{2,1})}\bigr)\|\Delta Y^2\|_{L^2_T(\dot{H}^s)}.
\end{aligned}\eeq

Whereas via \eqref{c17} and using integration by parts, we can
obtain for $s>-1$ the following
 \beq\label{b25}\begin{aligned}
\bigl|\int_0^T(\Delta_j\p_1F_1^1\ |\ \Delta\Delta_j Y^1)\,dt\bigr|=&\bigl|\int_0^T(\Delta_j\na F_1^1\ |\ \p_1\na\Delta_j Y^1)\,dt\bigr|\\
\lesssim& c_j^22^{-2js}
\bigl(\|\na Y\|_{L^\infty_T(\dot{B}^1_{2,1})}\|\na Y_t\|_{L^2_T(\dot{H}^{s+1})}\\
& +\|\na Y\|_{L^\infty_T(\dot{H}^{s+1})}\|\na
Y_t\|_{L^2_T(\dot{B}^1_{2,1})}\bigr)\|\p_1\na
Y^1\|_{L^2_T(\dot{H}^s)}.
\end{aligned}\eeq
Because of \eqref{b15}, \eqref{b24a} to \eqref{b25}, to complete the
proof of \eqref{b30}, we only need to deal with the term
$\int_0^T(\Delta_j\p_2F_2^1\ |\ \Delta\Delta_j Y^1)\,dt$. For this
purpose, we split $F_2^1$ as $G^1+G^2$ with \beq\label{b25af}
 G^1=(2\p_1Y^1+|\p_1Y|^2)\p_2Y_t^1-(1+\p_2Y^2)\p_1Y^2\p_1Y_t^1,
\quad  G^2=-(1+\p_1Y^1)\p_2Y^1\p_1Y_t^1. \eeq It follows from Lemmas
\ref{L5} and \ref{L6} and \eqref{A4} that \beno
\begin{split}
\|\p_2G^1\|_{L_T^1(\dot{H}^s)} \lesssim&\|\p_1
Y\|_{L^2_T(\dot{B}^1_{2,1})}\|\na
Y_t\|_{L^2_T(\dot{H}^{s+1})}\\
& +(\|\p_1 Y\|_{L^2_T(\dot{H}^{s+1})}+\|\na
Y^2\|_{L^2_T(\dot{H}^{s+1})})\|\na Y_t\|_{L^2_T(\dot{B}^1_{2,1})},
\end{split} \eeno for $s>-1,$
this along with the fact that
$\|\p_2G^1\|_{\wt{L}_T^1(\dot{H}^s)}\lesssim\|\p_2G^1\|_{L_T^1(\dot{H}^s)}$
implies \beq\label{b27}\begin{aligned} |\int_0^T(\Delta_j&\p_2G^1\
|\ \Delta\Delta_j Y^1)_{L^2}\,dt| \lesssim c_j^22^{-2js}\Bigl(\|\p_1
Y\|_{L^2_T(\dot{B}^1_{2,1})}\|\na
Y_t\|_{L^2_T(\dot{H}^{s+1})}\\
& +(\|\p_1 Y\|_{L^2_T(\dot{H}^{s+1})}+\|\na
Y^2\|_{L^2_T(\dot{H}^{s+1})})\|\na
Y_t\|_{L^2_T(\dot{B}^1_{2,1})}\Bigr) \|\Delta
Y^1\|_{\wt{L}^\infty_T(\dot{H}^s)},
\end{aligned}\eeq for any $s>-1$.

To handle $G^2$ in \eqref{b25af}, we first do a Bony's decomposition
\eqref{bb5} so that \beno
\p_2Y^1\p_1Y_t^1=T_{\p_2Y^1}\p_1Y_t^1+T_{\p_1Y_t^1}{\p_2Y^1}+R(\p_2Y^1,\p_1Y_t^1).
\eeno Using integration by parts, we have
 \beno
 \begin{split}
&\int_0^T(\Delta_j(T_{\p_2Y^1}\p_1Y_t^1)\ |\ \Delta\Delta_j\p_2Y^1)\,dt\\
&=-\int_0^T(\Delta_j(T_{\p_1\p_2Y^1}Y_t^1)\ |\
\Delta\Delta_j\p_2Y^1)\,dt-\int_0^T(\Delta_j(T_{\p_2Y^1}Y_t^1)\ |\
\Delta\Delta_j\p_1\p_2Y^1)\,dt, \end{split} \eeno from which and
Lemma \ref{le2.1}, we conclude \beno\begin{aligned}
&\bigl|\int_0^T(\Delta_j(T_{\p_2Y^1}\p_1Y_t^1)\ |\ \Delta\Delta_j\p_2Y^1)\,dt\bigr|\\
&\lesssim\sum_{|j'-j|\leq4}\bigl(\|S_{j'-1}\p_1\p_2Y^1\|_{L^2_T(L^\infty)}\|\Delta_{j'}Y_t^1\|_{L^2_T(L^2)}
\|\Delta\Delta_j\p_2Y^1\|_{L^\infty_T(L^2)}\\
&\qquad+\|S_{j'-1}\p_2Y^1\|_{L^\infty_T(L^\infty)}\|\Delta_{j'}Y_t^1\|_{L^2_T(L^2)}
\|\Delta\Delta_j\p_1\p_2Y^1\|_{L^2_T(L^2)}\bigr)\\
&\lesssim
c_j^22^{-2js}\bigl(\|\p_1\p_2Y^1\|_{L^2_T(\dot{B}^1_{2,1})}\|Y_t^1\|_{L^2_T(\dot{H}^{s+1})}
\|\Delta Y^1\|_{\wt{L}^\infty_T(\dot{H}^s)}\\
&\qquad+\|\p_2Y^1\|_{L^\infty_T(\dot{B}^1_{2,1})}\|Y_t^1\|_{L^2_T(\dot{H}^{s+2})}
\|\p_1\na Y^1\|_{L^2_T(\dot{H}^s)}\bigr).
\end{aligned}\eeno
While it is easy to verify that for $s>-1$, \beno\begin{aligned}
\bigl|&\int_0^T(\Delta_j(R(\p_2Y^1,\p_1Y_t^1))\ |\ \Delta\Delta_j\p_2Y^1)\,dt\bigr|\\
&\lesssim\sum_{j'\geq
j-N_0}\|\Delta_{j'}\p_2Y^1\|_{L^\infty_T(L^2)}\|\wt\Delta_{j'}\p_1
Y_t^1\|_{L^1_T(L^\infty)}
\|\Delta\Delta_j\p_2Y^1\|_{L^\infty_T(L^2)}\\
&\lesssim c_j^22^{-2js}\|\p_1
Y_t^1\|_{L^1_T(L^\infty)}\|\p_2Y^1\|_{\wt{L}^\infty_T(\dot{H}^{s+1})}^2.
\end{aligned}\eeno
The same estimate holds for
$\int_0^T(\Delta_j(T_{\p_1Y_t^1}\p_2Y^1))\ |\
\Delta\Delta_j\p_2Y^1)\,dt$. Consequently we obtain for any $s>-1$
that
 \beq\label{b27ad}
\begin{split}
\bigl|\int_0^T&(\Delta_j(\p_2Y^1\p_1Y_t^1)\ |\
\Delta\Delta_j\p_2Y^1)_{L^2}\,dt\bigr|\lesssim
c_j^22^{-2js}\Bigl\{\|\p_1
Y_t^1\|_{L^1_T(L^\infty)}\|\p_2Y^1\|_{\wt{L}^\infty_T(\dot{H}^{s+1})}^2\\
&\qquad+\|\p_1\p_2Y^1\|_{L^2_T(\dot{B}^1_{2,1})}\|
Y_t^1\|_{L^2_T(\dot{H}^{s+1})} \|
Y^1\|_{\wt{L}^\infty_T(\dot{H}^{s+2})}\\
&\qquad+\|\p_2Y^1\|_{L^\infty_T(\dot{B}^1_{2,1})}\|
Y_t^1\|_{L^2_T(\dot{H}^{s+2})} \|\p_1
Y^1\|_{L^2_T(\dot{H}^{s+1})}\Bigr\}.
\end{split}
\eeq

Finally, under the assumption \eqref{A5}, we deduce from Lemmas
\ref{L5} and \ref{L6} the following
 \beno
\begin{split}
\bigl|\int_0^T&(\Delta_j(\p_1Y^1\p_2Y^1\p_1Y_t^1)\ |\
\Delta\Delta_j\p_2Y^1)\,dt\bigr|\\
\lesssim &c_j^22^{-2js}\Bigl(\|\p_1Y^1\|_{L^2_T(\dot
B^1_{2,1})}\|\p_1Y_t^1\|_{L^2_T(\dot
H^{s+1})}+(\|\p_1Y^1\|_{L^2_T(\dot
H^{s+1})}\\
&\qquad+\|\p_1Y^1\|_{L^2_T(\dot
B^1_{2,1})})\|\p_1Y^1_t\|_{L^2_T(\dot
B^1_{2,1})}\Bigr)\|\p_2Y^1\|_{\wt{L}^\infty_T(\dot
H^{s+1})}\quad\mbox{for}\quad s>-1,
\end{split}
\eeno from which and \eqref{b25af}, \eqref{b27ad}, we arrive at
\beq\label{b29}\begin{aligned} |\int_0^T&(\Delta_j\p_2G^2\ |\
\Delta\Delta_jY^1)\,dt|\lesssim c_j^22^{-2js} \bigl(\|\p_1
Y_t^1\|_{L^1_T(L^\infty)}+\|\p_1\na
Y\|_{L^2_T(\dot{B}^1_{2,1})}\\
&+\|\p_1
Y\|_{L^2_T(\dot{B}^1_{2,1})}+\|\na Y\|_{L^\infty_T(\dot{B}^1_{2,1})}+\|\na
Y_t\|_{L^2_T(\dot{B}^1_{2,1})}\bigr)\bigl(\|\p_1Y\|_{L^2_T(\dot{B}^1_{2,1})}^2\\
&+\|Y\|_{\wt{L}^\infty_T(\dot{H}^{s+2})}^2+\|\p_1Y\|_{L^2_T(\dot{H}^{s+1})}^2
+\|Y_t\|_{L^2_T(\dot{H}^{s+1})}^2+\|Y_t\|_{L^2_T(\dot{H}^{s+2})}^2\bigr),
\end{aligned}\eeq
for any $s>-1$ and with the assumption \eqref{A5}.

Using \eqref{b15}, and by summing up  \eqref{b24a}, \eqref{b24},
\eqref{b25}, \eqref{b27} and \eqref{b29},  we conclude the proof of
\eqref{b30}. This finishes the proof of Lemma \ref{lem5.5}.
\end{proof}

\subsection{The proof of Theorem \ref{T}} \label{subsect4.3} The proof of
Theorem \ref{T} is  based on the following proposition:

\begin{prop}\label{prop5.2}
{\sl Let  $s_1>1$ and $ s_2\in (-1,-\f12).$ Let $(Y,q)$ be a smooth
enough solution of \eqref{a12}-\eqref{a13} on $[0,T].$ We denote
$\mathcal{E}_T^{s_1,s_2}(Y,q)$ as \beq\label{c22ag}
\mathcal{E}_T^{s_1,s_2}(Y,q)\eqdefa
E_T^{s_1}(Y,q)+E_T^{s_2}(Y,q)\quad
\text{and}\quad\mathcal{E}_0^{s_1,s_2}\eqdefa E^{s_1}_0+E^{s_2}_0,
\eeq with  \beno\begin{aligned}
E_T^s(Y,q)\eqdefa&\|Y_t\|_{\wt{L}^\infty_T(\dot{H}^s)}^2+\|Y_t\|_{\wt{L}^\infty_T(\dot{H}^{s+1})}^2+\|\p_1Y\|_{\wt{L}^\infty_T(\dot{H}^s)}^2
+\|Y^2\|_{\wt{L}^\infty_T(\dot{H}^{s+1})}^2\\
&+\|Y\|_{\wt{L}^\infty_T(\dot{H}^{s+2})}^2+\|Y_t\|_{L^2_T(\dot{H}^{s+1})}^2+\|Y_t\|_{L^2_T(\dot{H}^{s+2})}^2+\|\p_1Y\|_{L^2_T(\dot{H}^{s+1})}^2
\\
&+\|Y^2\|_{L^2_T(\dot{H}^{s+2})}^2+\|\na
q\|_{L^2_T(\dot{H}^s)}^2+\|\na q\|_{L^1_T(\dot{H}^s)}^2,
\end{aligned}\eeno
and  \beno
E_0^s\eqdefa\|Y_1\|_{\dot{H}^s}^2+\|Y_1\|_{\dot{H}^{s+1}}^2+\|\p_1Y_0\|_{\dot{H}^s}^2+\|Y_0\|_{\dot{H}^{s+2}}^2.
\eeno Then under the assumption  \eqref{bb14} and
\beq\label{A3}\begin{aligned} \|\na
Y\|_{\wt{L}^\infty_T(\dot{B}^1_{2,1})}+\|\na
Y\|_{\wt{L}^\infty_T(\dot{B}^2_{2,1})}+\|Y\|_{\wt{L}^\infty_T(\dot{H}^{s_1+2})}+\|Y\|_{\wt{L}^\infty_T(\dot{H}^{s_2+2})}
\leq 1,
\end{aligned}\eeq
we have \beq\label{c22} \mathcal{E}_T^{s_1,s_2}(Y,q)\leq
C_1\mathcal{E}_0^{s_1,s_2}+C_1\bigl((\mathcal{E}^{s_1,s_2}_0)^{1/2}+\mathcal{E}_T^{s_1,s_2}(Y,q)^{1/2}+\mathcal{E}_T^{s_1,s_2}(Y,q)\bigr)\mathcal{E}_T^{s_1,s_2}(Y,q),
\eeq for some positive constant $C_1.$}
\end{prop}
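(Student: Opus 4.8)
The plan is to run a frequency-weighted energy estimate built on Proposition~\ref{prop5.1}, feeding in the source-term bounds of Lemmas~\ref{lem5.2}, \ref{lem5.3}, \ref{lem5.5}, the pressure estimates of Proposition~\ref{p5}, and the Lipschitz estimate of Proposition~\ref{p4}; the smallness hypotheses \eqref{bb14} and \eqref{A3} are then used to absorb all the self-referential contributions. First, for $s\in\{s_1,s_2\}$, multiply the dyadic identity \eqref{b14a} by $2^{2js}$ and sum over $j\in\Z$. By Remark~\ref{rmk1}(1) the $\ell^2$-weighted sums of $\|\Delta_j\,\cdot\,\|_{L^2}^2$ recombine into homogeneous Sobolev norms, and the Chemin--Lerner convention of Definition~\ref{def3} turns the $\|\cdot\|_{L^\infty_T(L^2)}$ pieces into $\|\cdot\|_{\wt L^\infty_T(\dot H^{\sigma})}$; together this reproduces, up to equivalence, $E_T^{s}(Y,q)$ minus its $\|\na q\|$ part on the left, and on the right $\mathcal{E}_0^{s_1,s_2}$ from the data plus $\sum_j 2^{2js}\bigl(\|\Delta_j\r\|_{L^\infty_T(L^2)}^2+\|\na\Delta_j\r\|_{L^2_T(L^2)}^2\bigr)$ and the force term $\sum_j 2^{2js}\bigl|\int_0^T(\Delta_j\vv f\ |\ \Delta_jY_t-\f{1}{4}\Delta\Delta_jY-\Delta\Delta_jY_t)\,dt\bigr|$.

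Next, insert the structural bounds. Lemma~\ref{lem5.2} controls the $\r$-sums by products such as $\|\p_2Y\|\,\|\p_1Y\|$ in the $\dot H^{s_i}$-scale; the force sum is split by Lemmas~\ref{lem5.3} and \ref{lem5.5} into (i)~pieces of the schematic form (small factor, bounded by \eqref{bb14}--\eqref{A3})~$\times$~(quadratic in $\mathcal{E}_T^{s_1,s_2}$), and (ii)~pieces linear in $\|\na q\|_{L^1_T(\dot H^{s})\cap L^1_T(L^2)}$ or $\|\na q\|_{L^2_T(\dot H^{s})\cap L^2_T(L^2)}$, each multiplied by a factor controlled by $(\mathcal{E}_T^{s_1,s_2})^{1/2}$. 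Throughout, every auxiliary norm that occurs --- the mixed $\dot B^1_{2,1}$, $\dot B^2_{2,1}$, $\dot B^{1/2}_{2,1}$, $\dot B^{5/2}_{2,1}$, $\dot B^{3/2}_{2,1}$ and $\cB^{\f{1}{4},\f{1}{4}}$ norms --- is rewritten in the $\dot H^{s_2+\,\cdot}\cap\dot H^{s_1+\,\cdot}$ scale via Remark~\ref{rmk1}(3), Lemma~\ref{L1} and \eqref{bb4}; this is precisely where $s_1>1$ and $s_2\in(-1,-\f{1}{2})$ are consumed, so that the intermediate indices $1,2,\f{1}{2},\f{5}{2},\f{3}{2}$ lie strictly between the two endpoints.

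Then, close the pressure estimate. Taking $s=s_1,s_2$ in Proposition~\ref{p5} and using that $\|Y\|_{L^\infty_T(\dot H^{s+2})}$ is small by \eqref{A3}, absorb the $\|Y\|_{L^\infty_T(\dot H^{s+2})}\|\na q\|_{L^1_T(L^2)}$ (resp.\ $\|\na q\|_{L^2_T(L^2)}$) term into the left-hand side, which yields $\|\na q\|_{L^1_T(\dot H^{s_1}\cap\dot H^{s_2})}^2+\|\na q\|_{L^2_T(\dot H^{s_1}\cap\dot H^{s_2})}^2\lesssim \mathcal{E}_T^{s_1,s_2}(Y,q)^2$ (again converting stray $\dot B^1_{2,1}$ norms as above). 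Substituting this back eliminates all explicit $q$-dependence from the right-hand side and upgrades the pieces of type (ii) to $\mathcal{E}_T^{s_1,s_2}(Y,q)^{3/2}$. The last ingredient is the Lipschitz term $\|\p_1Y_t^1\|_{L^1_T(L^\infty)}$ appearing in \eqref{b30}: bound it by $\|\na Y_t\|_{L^1_T(L^\infty)}$ and invoke Proposition~\ref{p4}, whose right-hand side, after the same $\dot B^\sigma_{2,1}$-to-$\dot H$ conversion, is $\lesssim (\mathcal{E}_0^{s_1,s_2})^{1/2}+\mathcal{E}_T^{s_1,s_2}(Y,q)$. Collecting all contributions, every nonlinear term on the right is of the form $C_1\bigl((\mathcal{E}_0^{s_1,s_2})^{1/2}+\mathcal{E}_T^{s_1,s_2}(Y,q)^{1/2}+\mathcal{E}_T^{s_1,s_2}(Y,q)\bigr)\mathcal{E}_T^{s_1,s_2}(Y,q)$, which is \eqref{c22}.

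The main obstacle is the bookkeeping of the two genuinely circular quantities, $\na q$ and $\|\na Y_t\|_{L^1_T(L^\infty)}$: they must be absorbed in the right order --- first $\na q$, using only the smallness of $\|Y\|_{L^\infty_T(\dot H^{s+2})}$, then the Lipschitz norm via Proposition~\ref{p4} --- and one must verify that \emph{every} mixed Besov or anisotropic $\cB$-norm produced along the way is sandwiched strictly between $\dot H^{s_2+\,\cdot}$ and $\dot H^{s_1+\,\cdot}$, so that the embeddings of Remark~\ref{rmk1}(3), Lemma~\ref{L1} and \eqref{bb4} legitimately apply. Keeping track of which terms are quadratic, which are of order $\f{3}{2}$, and which are of order $\ge 2$ in $\mathcal{E}_T^{s_1,s_2}$ --- so as to land exactly on the stated right-hand side --- is the most delicate bit.
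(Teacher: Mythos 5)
Your proposal is correct and follows essentially the same route as the paper: weighted summation of \eqref{b14a} over $j$, insertion of Lemma \ref{lem5.2} for $\r$ and Lemmas \ref{lem5.3}--\ref{lem5.5} for the force term, Proposition \ref{p5} for the pressure, Proposition \ref{p4} for $\|\p_1Y_t^1\|_{L^1_T(L^\infty)}$, and interpolation via \eqref{bb0}, Lemma \ref{L1} and \eqref{bb4} (which is indeed where $s_1>1$ and $s_2\in(-1,-\f12)$ are used). The only cosmetic difference is that you ``absorb'' the $\|Y\|_{L^\infty_T(\dot H^{s+2})}\|\na q\|_{L^1_T(L^2)}$ term --- which \eqref{A3} (a bound by $1$, not by a small constant) does not strictly license --- whereas the paper simply keeps $\|\na q\|$ inside $E_T^s$ and lets that product land in the $\mathcal{E}_T\cdot\mathcal{E}_T$ bucket of \eqref{c22}; your conclusion is unaffected.
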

\begin{proof} Under the assumptions \eqref{bb14} and
\eqref{A3}, for $s=s_1$ and $s=s_2$, we can deduce from Propositions
\ref{prop5.1} and \ref{p5} that \beno
\begin{split}
E_T^s(&Y,q)\lesssim
E_0^s+\|\r\|_{\wt{L}^\infty_T(\dH^s)}^2+\|\r\|_{L^2_T(\dH^{s+1})}^2+\|Y\|_{L^\infty_T(\dH^{s+2})}^2(\|\na
q\|_{L^1_T(L^2)}^2+\|\na q\|_{L^2_T(L^2)}^2)\\
&+\bigl(\|Y_t\|_{L^2_T(\dot B^1_{2,1})}^2+\|\p_1 Y\|_{L^2_T(\dot B^1_{2,1})}^2+\|\p_1\na
Y\|_{L^2_T(L^2)}^2+\|Y_t\|_{L^\infty_T(\dot
B^1_{2,1})}^2\\
&+\|\na Y\|_{L^\infty_T(\dot
B^1_{2,1})}^2\bigr)E_T^s(Y,q)+\sum_{j\in\Z}2^{2js}\bigl|\int_0^T\bigl(\Delta_j\vv
f\ |\ \Delta_jY_t-\f{1}{4}\Delta\Delta_jY-\D\D_jY_t\bigr)\,dt\bigr|.
\end{split}
\eeno
Thanks to Lemma \ref{lem5.2}, there holds
\beno
\|\r\|_{\wt{L}^\infty_T(\dH^s)}^2+\|\r\|_{L^2_T(\dH^{s+1})}^2\lesssim\bigl(\|\na Y\|_{L^\infty_T(\dot
B^1_{2,1})}^2+\|\p_1 Y\|_{L^\infty_T(L^2)}^2+\|\p_1 Y\|_{L^2_T(\dot
B^1_{2,1})}^2\bigr)E_T^s(Y,q).
\eeno
While it follows from Lemmas \ref{lem5.3} and \ref{lem5.5}
that \beno
\begin{split}
&\sum_{j\in\Z}2^{2js}\bigl|\int_0^T\bigl(\Delta_j\vv f\ |\
\Delta_jY_t-\f{1}{4}\Delta\Delta_jY-\D\D_jY_t\bigr)\,dt\bigr|\\
&\lesssim
\bigl(\|Y\|_{\wt{L}^\infty_T(\dH^{s+2})}+\|Y_t\|_{\wt{L}^\infty_T(\dH^s)}+\|Y_t\|_{L^2_T(\dH^{s+2})}\bigr)\bigl(\|\na
q\|_{L^1_T(\dH^s)}+\|\na q\|_{L^1_T(L^2)}+\|\na
q\|_{L^2_T(\dH^s)}\\
&\quad+\|\na q\|_{L^2_T(L^2)}\bigr)+\bigl(\|\p_1 Y_t^1\|_{L^1_T(L^\infty)}+\|\p_1\na
Y\|_{L^2_T(\dot{B}^1_{2,1})}+\|\p_1
Y\|_{L^2_T(\dot{B}^1_{2,1})}+\|\na
Y\|_{L^\infty_T(\dot{B}^1_{2,1})}\\
&\quad+\|Y_t\|_{L^2_T(\dH^1)}+\|\na
Y_t\|_{L^2_T(\dot{B}^1_{2,1})}\bigr)\bigl(\|\p_1Y\|_{L^2_T(\dot{B}^1_{2,1})}^2+E^s_T(Y,q)\bigr)\quad
\mbox{for}\quad s=s_1, s_2.
\end{split}
\eeno 
As a consequence and with Proposition \ref{p5}, we obtain for $s=s_1, s_2,$ that
\beq\label{c20}\begin{aligned}
E_T^{s}(Y,q)\lesssim& E_0^{s}+\bigl((E_T^{0}(Y,q))^{1/2}+(E_T^{s}(Y,q))^{1/2}+E_T^{0}(Y,q)+E_T^{s}(Y,q)\\
&+\|\p_1Y^1_t\|_{L^1_T(L^\infty)}+\|\na Y\|_{\wt{L}^\infty_T(\dot{B}^1_{2,1})}+\|\p_1 Y\|_{\wt{L}^2_T(\dot{B}^1_{2,1})}+\|\p_1\na Y\|_{\wt{L}^2_T(\dot{B}^1_{2,1})}\\
&+\|\na
Y_t\|_{\wt{L}^2_T(\dot{B}^1_{2,1})}+\|\p_1 Y\|_{\wt{L}^2_T(\dot{B}^1_{2,1})}^2+\|Y_t\|_{\wt{L}^\infty_T(\dot{B}^1_{2,1})}^2+
\|Y_t\|_{\wt{L}^2_T(\dot{B}^1_{2,1})}^2\bigr)\\
&\times\bigl(E_T^{0}(Y,q)+E_T^{s}(Y,q)
+\|\p_1 Y\|_{\wt{L}^2_T(\dot{B}^1_{2,1})}^2\bigr).
\end{aligned}\eeq
Notice from \eqref{bb0}, one can easily deduce  \beno
&&\|\p_1Y\|_{\wt{L}^2_T(\dot{B}^1_{2,1})}\lesssim \|\p_1Y\|_{L^2_T(\dot{H}^{s_2+1})}+\|\p_1 Y\|_{L^2_T(\dot{H}^{s_1+1})},\\
&&\text{and}\quad E_T^{0}(Y,q)\lesssim
E_T^{s_2}(Y,q)+E_T^{s_1}(Y,q), \eeno for any $s_1>1,\,
s_2\in(-1,-\f12)$. Thus by taking $s=s_1, s_2$ in \eqref{c20} and
summing up the resulting inequality yields \beq\label{c20ad}
\mathcal{E}_T^{s_1,s_2}(Y,q)\lesssim\mathcal{E}_0^{s_1,s_2}+\bigl(\|\p_1Y^1_t\|_{L^1_T(L^\infty)}
+\mathcal{E}_T^{s_1,s_2}(Y,q)^{1/2}+\mathcal{E}_T^{s_1,s_2}(Y,q)\bigr)\mathcal{E}_T^{s_1,s_2}(Y,q).
\eeq On the other hand,  it follows from Proposition \ref{p4} and
\eqref{bb0} that \beq\label{c20ag}
\|\p_1Y^1_t\|_{L^1_T(L^\infty)}\lesssim
(\mathcal{E}^{s_1,s_2}_0)^{1/2} +\mathcal{E}_T^{s_1,s_2}(Y,q). \eeq
Substituting \eqref{c20ag} into \eqref{c20ad}, one concludes the
proof of \eqref{c22}.
\end{proof}

\begin{rmk}
We should mention that the restriction for $s_2\in (-1,-\f12)$ in
Theorem \ref{T} is due to the following fact: \beno
\begin{split}
\|Y_t\|_{L^2_T(\dot{B}^{\f12}_{2,1})}+\|\p_1Y\|_{L^2_T(\dot{B}^{\f12}_{2,1})}\lesssim&
\|Y_t\|_{L^2_T(\dot{H}^{s_2+1})}+\|Y_t\|_{L^2_T(\dot{H}^{s_1+1})}\\
&+\|\p_1Y\|_{L^2_T(\dot{H}^{s_2+1})}+\|\p_1Y\|_{L^2_T(\dot{H}^{s_1+1})},
\end{split}
\eeno which has been used in the proof of \eqref{c20ag}.
\end{rmk}

Now we are in position to complete the proof of Theorem \ref{T}.

\begin{proof}[Proof of Theorem \ref{T}] Given initial data  $(Y_0, Y_1)$ satisfying the assumptions
listed in Theorem \ref{T}, we deduce by a standard argument that
\eqref{a12}-\eqref{a13} has a unique solution $(Y,q)$ on $[0,T],$
which satisfies \eqref{A1wq} on $[0,T]$. Let $T^*$ be the largest
possible time so that \eqref{A1wq} holds. Then to completes the
proof of Theorem \ref{T}, we only need to show that $T^\ast=\infty$
and there holds \eqref{M1} provided that \eqref{A1a} and \eqref{A1}
hold. Toward this, we denote
 \beq\label{c26} \begin{split}
&\bar{T}\eqdefa \max\bigl\{\ T<T^\ast:\ \
\mathcal{E}_T^{s_1,s_2}(Y,q)\leq \eta_0^2\ \ \ \bigr\},
\end{split}
\eeq for $\eta_0$ so small that $C_1(2\eta_0+\eta_0^2)\leq\f12,$ and
\beq\label{c26ad}
\begin{split}
&\|\na Y\|_{\wt{L}^\infty_T(\dot{B}^1_{2,1})}+\|\na Y\|_{\wt{L}^\infty_T(\dot{B}^2_{2,1})}+\|Y\|_{\wt{L}^\infty_T(\dot{H}^{s_1+2})}+\|Y\|_{\wt{L}^\infty_T(\dot{H}^{s_2+2})}\\
&
\leq C_2\mathcal{E}_T^{s_1,s_2}(Y,q)^{\f12}\leq C_2\eta_0\leq 1,
\end{split}
\eeq for the same $C_1$  as that in \eqref{c22}.

 We shall prove that $\bar{T}=\infty$ provided that
$\e_0$ is sufficiently small in \eqref{A1}.  Otherwise, by
\eqref{c26ad}, we can apply Proposition \ref{prop5.2} to conclude
that
 \beq\label{c26ah}
\mathcal{E}_{\bar T}^{s_1,s_2}(Y,q)\leq 2C_1\mathcal{E}_0^{s_1,s_2}.
\eeq In particular, if we take $\e_0$ so small that
$2C_1\e_0^2\leq\f12\eta_0^2,$ \eqref{c26ah} contradicts with
\eqref{c26} if $\bar{T}<\infty.$ This in turn shows that
$\bar{T}=T^\ast=\infty,$ and \eqref{M1} is valid. This completes the
proof of Theorem \ref{T}.
\end{proof}

\medskip
\section{The proof of Theorem \ref{th1} and Theorem \ref{th2}}\label{sect6}
\setcounter{equation}{0}

With Theorem \ref{T} and  Lemma \ref{funct} in the Appendix
\ref{appendc} in hand, we can now present the proof of Theorem
\ref{th1}.

\begin{proof}[Proof of Theorem \ref{th1}] Given $\psi_0,
\tilde\psi_0$ satisfying the assumptions of Theorem \ref{th1}, we
get by using Lemma \ref{initial} that there exists a vector-valued
function $Y_0(y)=(Y_0^1(y),Y_0^2(y))^T$ so that \beno \begin{aligned}
&\|Y_0\|_{\dot{H}^{s_1+2}\cap\dot{H}^{s_2+2}}
+\|\p_1Y_0\|_{\dot{H}^{s_2}}\\
& \leq
C(\|\na\psi_0\|_{\dH^{s_1+1}\cap\dH^{s_2}},\|\na\tilde\psi_0\|_{\dH^{s_1+1}\cap\dH^{s_2+1}}
)\bigl(\|\na
\psi_0\|_{\dot{H}^{s_1+1}\cap\dot{H}^{s_2}}+\|\na\tilde\psi_0\|_{\dot{H}^{s_1+1}\cap\dot{H}^{s_2+1}}\bigr),
\end{aligned}\eeno
and
 $X_0(y)\eqdefa
I+Y_0(y)$ satisfies \beno \na_y Y_0(y)=\begin{pmatrix}
\p_{x_2}\psi_0& \p_{x_2}\tilde\psi_0\\
-\p_{x_1}\psi_0&-\p_{x_1}\tilde\psi_0
\end{pmatrix}\circ X_0(y). \eeno
Let $Y_1(y)\eqdefa\vv u_0(X_0(y)).$ Applying Lemma \ref{funct} (vi)
and (ii) with $\Phi=X^{-1}_0(x)$ gives \beno
\begin{split}
&\|Y_1\|_{\dot{H}^{s_1+1}}\leq
C(\|\na_x\Psi_0\|_{L^\infty})(1+\|\D_x\Psi_0\|_{H^{s_1-1}})\|\na_x\vv
u_0\|_{H^{s_1}},\\
&\|Y_1\|_{\dot{H}^{s_2}}\leq
C(\|\na_x\Psi_0\|_{L^\infty})\bigl(\|\vv u_0
\|_{\dot{H}^{s_2}}+\|\na_x\Psi_0\|_{\dH^{s_2+1}}\|\vv
u_0\|_{L^2}\bigr),
\end{split}
 \eeno where
$\Psi_0\eqdefa(\psi_0,\tilde\psi_0)^T.$ Therefore, thanks to
\eqref{1.5}, we conclude that \beq\label{cc7}\begin{aligned}
&\|Y_0\|_{\dot{H}^{s_1+2}\cap\dot{H}^{s_2+2}}
+\|\p_1Y_0\|_{\dot{H}^{s_2}}+\|Y_1\|_{\dot{H}^{s_1+1}\cap\dot{H}^{s_2}}\\
&\qquad\lesssim\|\na
\psi_0\|_{\dot{H}^{s_1+1}\cap\dot{H}^{s_2}}+\|\na\tilde\psi_0\|_{\dot{H}^{s_1+1}\cap\dot{H}^{s_2+1}}+\|\vv
u_0\|_{\dot{H}^{s_1+1}\cap\dot{H}^{s_2}},
\end{aligned}\eeq
from which, \eqref{1.5} and Theorem \ref{T}, we deduce that the
system \eqref{a14} has a unique global solution $(Y,q)$ which
satisfies \eqref{A1wq} and \eqref{M1} provided that $c_0$ in
\eqref{1.5} is sufficiently small.

Let $X(t,y)\eqdefa y+Y(t,y),$  it follows from \eqref{M1} that
$X(t,y)$ is invertible with respect to $y$ variable and  we denote
its inverse mapping by $X^{-1}(t,x).$ Let \beno
(a_{ij}(t,y))_{i,j=1,2}\eqdefa I+\na_yY(t,y)\quad \mbox{and}\quad
(b_{ij}(t,y))_{i,j=1,2}\eqdefa(a_{ij}(t,y))_{i,j=1,2}^{-1}. \eeno
Then as $\det\,(I+\na_yY)=1,$ $(b_{ij})_{i,j=1,2}$ equals to the
adjoint matrix of $(a_{ij})_{i,j=1,2}$  and
$\sum_{i=1}^2\p_ib_{ij}=0$. With the notations above, we can write
\beno
\begin{split}
&\p_{x_2}\bigl(-\p_1Y^2(t,X^{-1}(t,x))\bigr)-\p_{x_1}\bigl(\p_1Y^1(t,X^{-1}(t,x))\bigr)\\
&=-\p_{x_2}\bigl(a_{21}(t,X^{-1}(t,x))\bigr)
-\p_{x_1}\bigl(a_{11}(t,X^{-1}(t,x))\bigr)\\
&=-\sum_{j=1}^2\bigl[\p_j(b_{j2}a_{21}+b_{j1}a_{11})(t,X^{-1}(t,x))\bigr]=\sum_{j=1}^2[\p_j\d_{j1}](t,X^{-1}(t,x))=0.
\end{split}
\eeno By a similar argument, we have \beno
\p_{x_2}\bigl(-\p_2Y^2(t,X^{-1}(t,x))\bigr)-\p_{x_1}\bigl(\p_2Y^1(t,X^{-1}(t,x))\bigr)=0,\eeno
so that we can define  $(\psi(t,x),\tilde\psi(t,x))$ through
 \beq \label{9.1a}
\begin{split} &
\na_x\psi(t,x)=\bigl(-\p_1Y^2(t, X^{-1}(t,x)),\p_1Y^1(t,
X^{-1}(t,x))\bigr)^T,\quad  \psi(t,x)\to 0\ \ \mbox{as}\ |x|\to \infty,\\
&\na_x\tilde\psi(t,x)=\bigl(-\p_2Y^2(t, X^{-1}(t,x)),\p_2Y^1(t,
X^{-1}(t,x))\bigr)^T,\quad  \tilde\psi(t,x)\to 0\ \ \mbox{as}\
|x|\to \infty,
\end{split}
\eeq and  we define $(\vv u(t,x),p(t,x))$ via \beq \label{9.1}
\begin{split} &\vv u(t,x)\eqdefa Y_t(t, X^{-1}(t,x)), \quad p(t,x)\eqdefa
q(t,X^{-1}(t,x))-|\na_x(x_2+\psi(t,x))|^2.
\end{split} \eeq
Then according to Section \ref{sect2}, $(\phi, \tilde\phi, \vv u,
p)=(x_2+\psi, -x_1+\tilde\psi, \vv u, p)$ thus defined satisfies
\eqref{th1wq} and globally solves the coupled system between
\eqref{1.1} and \eqref{a3}. Then to complete the proof of Theorem
\ref{th1}, it suffices to prove \eqref{th1wr}.

For this, we first notice from  \eqref{9.1} that \beno (\na\vv
u)\circ X(t,y)=\na_yY_t(t,y) \bigl(I+\na_yY(t,y)\bigr)^{-1}, \eeno
which leads to \beq\label{cc1} \begin{aligned} \|\na\vv
u\|_{L^1(\R^+;L^\infty)}
&\lesssim(1+\|\na_yY\|_{L^\infty(\R^+;L^\infty)})\|\na_y
Y_t\|_{L^1(\R^+;L^\infty)}.
\end{aligned}\eeq

Again thanks to \eqref{9.1} and \eqref{9.1a}, we get by applying
Lemma \ref{funct} (ii) with $\Phi=X(t,y)$ that \beq\label{cc2}\begin{aligned}
&\|\vv u\|_{L^\infty(\R^+; \dH^{s_2})}+\|\na\psi\|_{L^\infty(\R^+;\dH^{s_2})}\\
& \quad\leq C(\|\na
Y\|_{L^\infty(\R^+;L^\infty)})\bigl\{\|Y_t\|_{L^\infty(\R^+;
\dH^{s_2})}
+\|\p_1 Y\|_{L^\infty(\R^+;\dH^{s_2})}\\
&\qquad+\|\na
Y\|_{L^\infty(\R^+;\dot{H}^{s_2+1})}\bigl(\|Y_t\|_{L^\infty(\R^+;
L^2)} +\|\p_1 Y\|_{L^\infty(\R^+;L^2)}\bigr)\bigr\}.
\end{aligned}\eeq
Whereas applying  Lemma \ref{funct} (iii) yields
\beq\label{cc3}\begin{aligned} &\|\vv
u\|_{L^2(\R^+;\dH^{s_2+1})}+\|\na\tilde\psi\|_{L^\infty(\R^+;\dH^{s_2+1})}
+\|\na\psi\|_{L^2(\R^+;\dH^{s_2+1})}\\
&\leq C(\|\na Y\|_{L^\infty(\R^+;L^\infty)})\bigl(\|Y_t\|_{L^2(\R^+;\dH^{s_2+1})}+\|\p_2 Y\|_{L^\infty(\R^+;\dH^{s_2+1})}+\|\p_1 Y\|_{L^2(\R^+;\dH^{s_2+1})}\bigr).
\end{aligned}\eeq
In the same manner, we get by applying Lemma \ref{funct} (vi) to
\eqref{9.1} and \eqref{9.1a} that \beq\label{cc4}
\begin{split}
&\|\vv u\|_{L^\infty(\R^+; \dH^{s_1+1})}
+\|\na\psi\|_{L^\infty(\R^+;\dH^{s_1+1})}+\|\na\tilde\psi\|_{L^\infty(\R^+;\dH^{s_1+1})}\\
&\quad+\|\vv u\|_{L^2(\R^+;\dH^{s_1+2})}+\|\na\psi\|_{L^2(\R^+;\dH^{s_1+1})}\\
&\leq C(\|\na Y\|_{L^\infty(\R^+;L^\infty)})(1+\|\D
Y\|_{L^\infty(\R^+;H^{s_1})})
\bigl(\|\na Y_t\|_{L^\infty(\R^+; H^{s_1})}\\
&\quad
+\|\D Y\|_{L^\infty(\R^+; H^{s_1})}
+\|\na Y_t\|_{L^2(\R^+; H^{s_1+1})}+\|\p_1\na Y\|_{L^2(\R^+; H^{s_1})}\bigr).
\end{split}
\eeq

Consequently, we deduce from \eqref{M1},  \eqref{bb0}, \eqref{cc7},
and \eqref{cc1} to \eqref{cc4} that
 \beq\label{cc5}
\begin{split}
&\|\vv u\|_{L^\infty(\R^+; \dH^{s_1+1}\cap\dH^{s_2})}
+\|\na\psi\|_{L^\infty(\R^+;\dH^{s_1+1}\cap\dH^{s_2})}+\|\na\tilde\psi\|_{L^\infty(\R^+;\dH^{s_1+1}\cap\dH^{s_2+1})}\\
&\quad+\|\vv u\|_{L^2(\R^+;\dH^{s_1+2}\cap\dH^{s_2+1})}
+\|\na\psi\|_{L^2(\R^+;\dH^{s_1+1}\cap\dH^{s_2+1})}+\|\na\vv u\|_{L^1(\R^+;L^\infty)}\\
&\lesssim\|\na
\psi_0\|_{\dot{H}^{s_1+1}\cap\dot{H}^{s_2}}+\|\na\tilde\psi_0\|_{\dot{H}^{s_1+1}\cap\dot{H}^{s_2+1}}+\|\vv
u_0\|_{\dot{H}^{s_1+1}\cap\dot{H}^{s_2}},
\end{split}
\eeq provided that \eqref{1.5} holds for $c_0$ sufficiently small.

Next, it follows from \eqref{9.1} that \beno \na_xp=((I+\na
Y)^{-T}\na q)\circ X^{-1}-2\na_x\p_{x_2}\psi-\na_x(|\na_x\psi|^2).
\eeno Applying Lemma \ref{funct} (ii) and Lemma \ref{L5} gives rise
to \beno
\begin{split}
\|\na_xp(t)\|_{\dot{H}^{s_2}}\leq& C(\|\na
Y\|_{L^\infty(\R^+;L^\infty)})
\bigl\{\|((I+\na Y)^{-T}\na q))(t)\|_{\dot{H}^{s_2}}+\|\na Y(t)\|_{\dot{H}^{s_2+1}}\\
&\times\|((I+\na Y)^{-T}\na q))(t)\|_{L^2}+\|\na\psi(t)\|_{\dot{H}^{s_2+1}}+\|(\na\psi)^2(t)\|_{\dot{H}^{s_2+1}}\bigr\}\\
\leq & C(\|\na Y\|_{L^\infty(\R^+;L^\infty)})\bigl\{(1+\|\na Y(t)\|_{\dot{B}^1_{2,1}})\bigl(\|\na q(t)\|_{\dot{H}^{s_2}}
\\
&+\|\na Y(t)\|_{\dot{H}^{s_2+1}}\|\na q(t)\|_{L^2}\bigr)
+(1+\|\na\psi(t)\|_{\dot{B}^1_{2,1}})\|\na\psi(t)\|_{\dot{H}^{s_2+1}}\bigr\},
\end{split}
\eeno which along with \eqref{M1}, \eqref{bb0},  \eqref{cc7} and
\eqref{cc5} implies that
 \beno
\|\na_xp\|_{L^2(\R^+;\dot{H}^{s_2})} \lesssim\|\na
\psi_0\|_{\dot{H}^{s_1+1}\cap\dot{H}^{s_2}}+\|\na\tilde\psi_0\|_{\dot{H}^{s_1+1}\cap\dot{H}^{s_2+1}}+\|\vv
u_0\|_{\dot{H}^{s_1+1}\cap\dot{H}^{s_2}}, \eeno provided that
\eqref{1.5} holds for $c_0$ sufficiently small.

Finally, applying Lemma \ref{funct} (v) and   (iv), along with
\eqref{M1}, \eqref{bb0},  \eqref{cc7}, \eqref{cc5} and Lemma
\ref{L5} yields \beno \|\na_xp\|_{L^2(\R^+;\dot{H}^{s_1})}
\lesssim\|\na
\psi_0\|_{\dot{H}^{s_1+1}\cap\dot{H}^{s_2}}+\|\na\tilde\psi_0\|_{\dot{H}^{s_1+1}\cap\dot{H}^{s_2+1}}+\|\vv
u_0\|_{\dot{H}^{s_1+1}\cap\dot{H}^{s_2}}. \eeno This completes the
proof of  \eqref{th1wr} and thus Theorem \ref{th1}.
\end{proof}

Before we present the proof of Theorem \ref{th2}, we shall first
prove the following blow-up criterion for smooth enough solutions of
\eqref{1.2}.

\begin{prop}\label{propq0}
{\sl Given $\na\psi_0\in H^s(\R^2)$ and $\vv u_0\in H^s(\R^2)$ for
$s>1,$ \eqref{1.2} has a unique solution $(\psi, \vv u)$ on $[0,T]$
for  some $T>0$ so that \beq\label{qs1} \begin{split} &\na\psi\in
C([0,T]; H^s(\R^2)),\quad \vv u\in  C([0,T]; H^{s}(\R^2)),\quad\na\vv u\in L^2((0,T);H^s(\R^2)),\\
&\na p\in C([0,T]; H^{s-1}(\R^2)).
\end{split}
\eeq Moreover, if $T^\ast$ is the lifespan to this solution and
$T^\ast<\infty,$ then \beq \int_0^{T^\ast}\bigl(\|\na\vv
u(t)\|_{L^\infty}+\|\na\psi(t)\|_{L^\infty}^2\bigr)\,dt=\infty.
\label{qs2}\eeq}
\end{prop}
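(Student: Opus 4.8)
The plan is to prove the blow-up criterion \eqref{qs2} for \eqref{1.2} by the standard continuation argument: assume $T^\ast<\infty$ and that $\int_0^{T^\ast}\bigl(\|\na\vv u(t)\|_{L^\infty}+\|\na\psi(t)\|_{L^\infty}^2\bigr)\,dt=:M<\infty$, and then derive a uniform bound on $\|\psi(t)\|_{H^{s+1}}+\|\vv u(t)\|_{H^s}$ on $[0,T^\ast)$, together with an $L^2_{T^\ast}(H^s)$ bound on $\na\vv u$ and an $L^2_{T^\ast}(H^{s-1})$ bound on $\na p$. Once these are in hand, the local existence theory (the first part of Proposition \ref{propq0}) can be reapplied with initial time close to $T^\ast$ to extend the solution past $T^\ast$, contradicting maximality. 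So the whole content is an \emph{a priori} estimate under the assumption that $\|\na\vv u\|_{L^1_{T^\ast}(L^\infty)}$ and $\|\na\psi\|_{L^2_{T^\ast}(L^\infty)}^2$ are finite.

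First I would recover the basic energy identity \eqref{1.3}, which gives $\|\na\psi\|_{L^\infty_t(L^2)}$, $\|\vv u\|_{L^\infty_t(L^2)}$ and $\|\na\vv u\|_{L^2_t(L^2)}$ all bounded by the data, independently of $t<T^\ast$. Next, apply $\Delta_j$ to the $\psi$-equation and to the $\vv u$-equation in \eqref{1.2}, take $L^2$ inner products with $\Delta_j\psi$ weighted by $2^{2j(s+1)}$ and with $\Delta_j\vv u$ weighted by $2^{2js}$ respectively, and sum in $j$. For the transport equation $\p_t\psi+\vv u\cdot\na\psi+u^2=0$ the commutator estimate (Lemma \ref{le2.1}-type / standard $H^s$ transport estimate from \cite{bcd}) produces
\[
\tfrac{d}{dt}\|\psi\|_{H^{s+1}}\lesssim \|\na\vv u\|_{L^\infty}\|\psi\|_{H^{s+1}}+\|\vv u\|_{H^{s+1}},
\]
where the last term comes from the linear forcing $u^2$ and from the top-order piece of $\vv u\cdot\na\psi$; note $\|\vv u\|_{H^{s+1}}$ is controlled by $\|\vv u\|_{L^2}+\|\na\vv u\|_{H^s}$. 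For the velocity equation, the parabolic smoothing gives
\[
\tfrac12\tfrac{d}{dt}\|\vv u\|_{H^s}^2+\|\na\vv u\|_{H^s}^2\lesssim \|\na\vv u\|_{L^\infty}\|\vv u\|_{H^s}^2+\bigl|\,(\,\p_1\p_2\psi,\,(\D+\p_2^2)\psi\ \big|\ \vv u)_{H^s}\bigr|+\bigl|(\dv[\na\psi\otimes\na\psi]\ \big|\ \vv u)_{H^s}\bigr|,
\]
after using the divergence-free condition to kill the pressure term $(\na p\,|\,\vv u)_{H^s}=0$. The linear terms $\p_1\p_2\psi$ and $(\D+\p_2^2)\psi$ are second order in $\psi$ and are simply bounded by $\|\psi\|_{H^{s+1}}\|\na\vv u\|_{H^s}$ and absorbed by Young's inequality into $\|\na\vv u\|_{H^s}^2$, at the cost of a term $C\|\psi\|_{H^{s+1}}^2$. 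The quadratic term $\dv[\na\psi\otimes\na\psi]$ is handled by the product law Lemma \ref{L5}(iii): $\|\dv[\na\psi\otimes\na\psi]\|_{H^{s-1}}\lesssim \|\na\psi\|_{L^\infty}\|\na\psi\|_{H^s}+\|\na\psi\|_{H^s}\|\na\psi\|_{L^\infty}\lesssim\|\na\psi\|_{L^\infty}\|\psi\|_{H^{s+1}}$, so pairing against $\vv u$ in $H^s$ and integrating by parts gives a contribution $\lesssim\|\na\psi\|_{L^\infty}\|\psi\|_{H^{s+1}}\|\na\vv u\|_{H^s}$, again absorbable up to $C\|\na\psi\|_{L^\infty}^2\|\psi\|_{H^{s+1}}^2$.

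Collecting these, with $A(t):=\|\psi(t)\|_{H^{s+1}}^2+\|\vv u(t)\|_{H^s}^2$, one obtains a differential inequality of the form
\[
\tfrac{d}{dt}A(t)+\|\na\vv u(t)\|_{H^s}^2\lesssim \bigl(1+\|\na\vv u(t)\|_{L^\infty}+\|\na\psi(t)\|_{L^\infty}^2\bigr)A(t)+\|\vv u(t)\|_{L^2}^2,
\]
and since $\int_0^{T^\ast}\bigl(1+\|\na\vv u\|_{L^\infty}+\|\na\psi\|_{L^\infty}^2\bigr)\,dt\le T^\ast+M<\infty$ and $\|\vv u\|_{L^2}$ is bounded by energy conservation, Gr\"onwall's inequality yields $\sup_{t<T^\ast}A(t)<\infty$ and then $\int_0^{T^\ast}\|\na\vv u\|_{H^s}^2\,dt<\infty$. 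The pressure bound follows from the elliptic equation $\D p=-\dv(\vv u\cdot\na\vv u)-\dv\dv[\na\psi\otimes\na\psi]$, giving $\|\na p\|_{H^{s-1}}\lesssim\|\vv u\cdot\na\vv u\|_{H^{s-1}}+\|\na\psi\otimes\na\psi\|_{H^s}\lesssim\|\vv u\|_{H^s}\|\na\vv u\|_{H^s}+\|\na\psi\|_{L^\infty}\|\psi\|_{H^{s+1}}$, which is in $L^2_{T^\ast}$. Hence all norms in \eqref{qs1} stay finite up to $T^\ast$, the solution extends beyond $T^\ast$ by the local theory, contradicting maximality; therefore \eqref{qs2} must hold.

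The main obstacle I anticipate is the bookkeeping of the \emph{linear} second-order terms $\p_1\p_2\psi$ and $(\D+\p_2^2)\psi$ in the velocity equation: naively they lose a full derivative relative to $\vv u$, so one must be careful to pair them against $\vv u$ in $H^s$ and push one derivative onto $\vv u$ (using $\dv\vv u=0$ where helpful) so that they are genuinely absorbed by the dissipation $\|\na\vv u\|_{H^s}^2$, leaving behind only a $C\|\psi\|_{H^{s+1}}^2$ term that feeds back into the Gr\"onwall argument through $A(t)$ rather than destroying the closure. The transport estimate for $\psi$ in $H^{s+1}$ and the product estimate for the quadratic magnetic stress are routine given Lemma \ref{L5} and standard Littlewood--Paley commutator bounds.
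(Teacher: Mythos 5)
Your proposal is correct in outline and would close, but it takes a genuinely different route from the paper at the one point where the structure of \eqref{1.2} matters. The paper never performs a top-order transport estimate for $\psi$ at all: it observes that the linear coupling $(\p_1\p_2\psi,(\D+\p_2^2)\psi)^T$ splits as $\na(\p_2\psi)+(0,\D\psi)^T$, so the gradient part pairs to zero against the divergence-free $\vv u$ (this is why \eqref{qs4}--\eqref{qs5} carry $p+\p_2\psi$ as a single pressure-like term), and the remaining term $(\D_j\D\psi\,|\,\D_ju^2)$ is converted, via the transport equation $u^2=-(\p_t\psi+\vv u\cdot\na\psi)$, into the exact derivative $\f12\f{d}{dt}\|\na\D_j\psi\|_{L^2}^2$ plus a commutator-type remainder. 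This yields a genuine energy identity \eqref{qs6} with no artificial absorption, which is the same mechanism the paper needs for its global-in-time estimates. You instead estimate the linear coupling by Cauchy--Schwarz and absorb it into the dissipation $\|\na\vv u\|_{H^s}^2$ at the price of a non-integrable constant $C\|\psi\|_{H^{s+1}}^2$ in the Gr\"onwall factor; that produces an $e^{CT^\ast}$ loss which is harmless for a blow-up criterion on a finite interval, so your argument is valid for this proposition, though it would not survive as a global estimate. Two loose ends to repair in your version: (a) the transport estimate for $\psi$ in $H^{s+1}$ also produces the commutator term $\|\na\psi\|_{L^\infty}\|\na\vv u\|_{H^s}$, which is not dominated by $\|\vv u\|_{H^{s+1}}$ alone since $\|\na\psi\|_{L^\infty}$ is only $L^2$ in time under the hypothesis; it must be absorbed by Young's inequality exactly as you do for the magnetic stress, contributing $C\|\na\psi\|_{L^\infty}^2\|\psi\|_{H^{s+1}}^2$; (b) the elliptic equation for the pressure also contains the linear term $-2\D\p_2\psi$ (equivalently $\na p$ contains $-2\na\p_2\psi$), which is bounded in $L^2_{T^\ast}(H^{s-1})$ by $T^{\ast 1/2}\sup_t\|\psi(t)\|_{H^{s+1}}$ and so does not affect the conclusion. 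With these fixes, and after making sure the total fraction of $\|\na\vv u\|_{H^s}^2$ eaten by the various absorptions stays below one, your scheme gives \eqref{qs1} and \eqref{qs2}.
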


\begin{proof} Given initial data $(\psi_0, \vv u_0),$ it is standard
to prove that \eqref{1.2} has a unique solution $(\psi, \vv u)$ on
$[0,T]$ for some $T>0,$ so that there holds the first line of
\eqref{qs1}. While we get by taking $\dive$ to the momentum equation
of \eqref{1.2} that \beno \na
p=-2\na\p_2\psi+\na(-\D)^{-1}\dive\bigl\{\vv u\cdot\na\vv
u+\dive(\na\psi\otimes\na\psi)\bigr\}, \eeno which along with the
first line of \eqref{qs1} implies that $\na p\in C([0,T];
H^{s-1}(\R^2)).$ This proves \eqref{qs1}.

It remains to verify the blow-up criterion \eqref{qs2}. Toward this,
for any $t<T^\ast,$ we get by using a standard energy estimate for
\eqref{1.2} that \beno
\f12\f{d}{dt}&\bigl(\|\na\psi(t)\|_{L^2}^2+\|\vv
u(t)\|_{L^2}^2\bigr)+\|\na\vv
u(t)\|_{L^2}^2=0,
\eeno
which implies that for any $t<T^\ast$
\beq\label{qs3} \|\na\psi(t)\|_{L^2}^2+\|\vv u(t)\|_{L^2}^2+\|\na\vv
u\|_{L^2_t(L^2)}^2\leq \|\na\psi_0\|_{L^2}^2+\|\vv
u_0\|_{L^2}^2.
\eeq While  acting $\D_j$ to $u^1$ equation of \eqref{1.2} and then
taking the $L^2$ inner product of the resulting equation with
$\D_ju^1$, it leads to  \beq\label{qs4}\begin{split}
&\f12\f{d}{dt}\|\D_ju^1(t)\|_{L^2}^2+\|\na\D_ju^1\|_{L^2}^2\\
&=-\bigl(\D_j\p_1(p+\p_2\psi)\ |\ \D_ju^1\bigr)-\bigl(\D_j(\vv u\cdot\na
u^1)\ |\ \D_ju^1\bigr)-\bigl(\dive\D_j(\p_1\psi\na\psi)\ |\
\D_ju^1\bigr).
\end{split}
\eeq Similarly acting $\D_j$ to $u^2$ equation of \eqref{1.2} and
then taking the $L^2$ inner product of the resulting equation with
$\D_ju^2$ leads to \beq\label{qs5}\begin{split}
&\f12\f{d}{dt}\|\D_ju^2(t)\|_{L^2}^2+\|\na\D_ju^2\|_{L^2}^2+(\D_j\D\psi\
|\ \D_ju^2)\\
&=-\bigl(\D_j\p_2(p+\p_2\psi)\ |\ \D_ju^2\bigr)-\bigl(\D_j(\vv u\cdot\na
u^2)\ |\ \D_ju^2\bigr)-\bigl(\dive\D_j(\p_2\psi\na\psi)\ |\
\D_ju^2\bigr).
\end{split}
\eeq However by the transport equation of \eqref{1.2} and using
integration by parts, one has \beno
\begin{split}
(\D_j\D\psi\ |\ \D_ju^2)=&-\bigl(\D_j\D\psi\ |\
\D_j(\p_t\psi+\vv u\cdot\na\psi)\bigr)\\
=&\f12\f{d}{dt}\|\na\D_j\psi(t)\|_{L^2}^2+\bigl(\D_j\na\psi\ |\
\na\D_j(\vv u\cdot\na\psi)\bigr).
\end{split}
\eeno Hence by combining \eqref{qs4} with \eqref{qs5} and using
$\dive\vv u=0,$ we obtain \beq\label{qs6}\begin{split}
&\f12\f{d}{dt}\bigl(\|\D_j\vv u(t)\|_{L^2}^2+\|\na\D_j\psi(t)\|_{L^2}^2\bigr)+\|\na\D_j\vv u\|_{L^2}^2\\
&=-\bigl(\D_j(\vv u\cdot\na\vv u)\ |\ \D_j\vv u\bigr)-\bigl(\D_j\na\psi\ |\
\na\D_j(\vv u\cdot\na\psi)\bigr)-\bigl(\dive\D_j(\na\psi\otimes\na\psi)\
|\ \D_j\vv u\bigr).
\end{split}\eeq

Next for $s>0,$ we claim that \beq\label{qs7}
\begin{split}
\bigl|\bigl(\D_j(\vv u\cdot\na b)\ |\ \D_jb\bigr)\bigr|\lesssim&
c_j(t)^22^{-2js}\bigl(\|\na\vv
u(t)\|_{L^\infty}\|b(t)\|_{\dH^s}^2\\
&\qquad\qquad\quad+\|\na
b(t)\|_{L^\infty}\|\vv u(t)\|_{\dH^s}\|b(t)\|_{\dH^s}\bigr)\quad\mbox{or}\\
\bigl|\bigl(\D_j(\vv u\cdot\na b)\ |\ \D_jb\bigr)\bigr|\lesssim&
c_j(t)^22^{-2js}\bigl(\|\na\vv u(t)\|_{L^\infty}\|b(t)\|_{\dH^s}^2\\
&\qquad\qquad\quad+\| b(t)\|_{L^\infty}\|\na\vv
u(t)\|_{\dH^s}\|b(t)\|_{\dH^s}\bigr).
\end{split}
\eeq Indeed applying Bony's decomposition \eqref{bb5} for $\vv
u\cdot\na b$ and then using a standard commutator's argument, we can
write  \beno
\begin{split}
\bigl(\D_j(\vv u\cdot\na b)\ |\ \D_jb\bigr)=&\sum_{|j-\ell|\leq
5}\bigl([\D_j; S_{\ell-1}\vv u]\cdot\na\D_\ell
b+(S_{\ell-1}\vv u-S_{j-1}\vv u)\cdot\na\D_\ell \D_jb\ |\ \D_jb\bigr)
\\
&+\bigl(S_{j-1}\vv u\cdot\na\D_jb\ |\ \D_jb\bigr)-\bigl(\D_j(\cR(\vv u,\na
b))\ |\ \D_jb\bigr).
\end{split}
\eeno It follows from the  classical commutator's estimate (see
\cite{bcd} for instance) that \beno\begin{split} \sum_{|j-\ell|\leq
5}\bigl|\bigl([\D_j; S_{\ell-1}\vv u]\cdot\na\D_\ell b\ |\
\D_jb\bigr)\bigr|\lesssim &\sum_{|j-\ell|\leq 5}\|\na
S_{\ell-1}\vv u\|_{L^\infty}\|\D_\ell b\|_{L^2}\|\D_j b\|_{L^2}\\
\lesssim &c_j(t)^22^{-2js}\|\na\vv u(t)\|_{L^\infty}\|b(t)\|_{\dH^s}^2.
\end{split}
\eeno The same estimate holds for $\sum_{|j-\ell|\leq
5}\bigl((S_{\ell-1}\vv u-S_{j-1}\vv u)\cdot\na\D_\ell \D_jb\ |\ \D_jb\bigr)$
and $\bigl(S_{j-1}\vv u\cdot\na\D_jb\ |\ \D_jb\bigr).$

Whereas applying Lemma \ref{le2.1} gives \beno \|\D_j\cR(\vv u,\na
b)\|_{L^2}\lesssim \sum_{\ell\geq j-N_0}\|\D_\ell\vv
u\|_{L^2}\|S_{\ell+2}\na b\|_{L^\infty}, \eeno which can be
controlled by $c_j(t)2^{-js}\|\na b(t)\|_{L^\infty}\|\vv u(t)\|_{\dH^s}$
or $c_j(t)2^{-js}\| b(t)\|_{L^\infty}\|\na\vv u(t)\|_{\dH^s}$ as long
as $s>0.$ This completes the proof of \eqref{qs7}.

Now we go back to \eqref{qs6}. In fact, applying Lemma \ref{L5} (i)
and \eqref{qs7} to \eqref{qs6} gives  \beno
\begin{split}
&\f12\f{d}{dt}\bigl(\|\D_j\vv u(t)\|_{L^2}^2+\|\na\D_j\psi(t)\|_{L^2}^2\bigr)+\|\na\D_j\vv u\|_{L^2}^2\\
&\lesssim c_j(t)^22^{-2js}\bigl\{\|\na\vv u\|_{L^\infty}(\|\vv
u\|_{\dH^s}^2+\|\na\psi\|_{\dH^s}^2)+\|\na\psi\|_{L^\infty}\|\na\vv
u\|_{\dH^s}\|\na\psi\|_{\dH^s}\bigr\}\quad\mbox{for}\ s>0.
\end{split}
\eeno The above implies for any $s>0$ \beno
\begin{split}
&\|\vv u(t)\|_{\dH^s}^2+\|\na\psi(t)\|_{\dH^s}^2+\|\na\vv
u\|_{L^2_t(\dH^s)}^2\\
&\leq \|\vv u_0\|_{\dH^s}^2+\|\na\psi_0\|_{\dH^s}^2+C\int_0^t(\|\na\vv
u(t')\|_{L^\infty}+\|\na\psi(t')\|_{L^\infty}^2)(\|\vv u(t')\|_{\dH^s}^2+\|\na\psi(t')\|_{\dH^s}^2)\,dt'.
\end{split}
\eeno Applying Gronwall's inequality yields \beno
\begin{split}
&\|\vv u(t)\|_{\dH^s}^2+\|\na\psi(t)\|_{\dH^s}^2+\|\na\vv
u\|_{L^2_t(\dH^s)}^2\\
&\leq
(\|\vv u_0\|_{\dH^s}^2+\|\na\psi_0\|_{\dH^s}^2)\exp\Bigl\{C\int_0^t(\|\na\vv
u(t')\|_{L^\infty}+\|\na\psi(t')\|_{L^\infty}^2)\,dt'\Bigr\}\quad\mbox{for}\quad
t<T^\ast,
\end{split}
\eeno which together with \eqref{qs3} implies \eqref{qs2}. This
completes the proof of Proposition \ref{propq0}.
\end{proof}

In order to apply Theorem \ref{th1} to prove Theorem \ref{th2}, we
also need the following lemma concerning the existence of $
\tilde{\psi}_0$ so that there holds \eqref{a2}.

\begin{lem}\label{lemf1}
{\sl Under the assumptions of Theorem \ref{th2}, \eqref{a2} has a
solution  $\tilde{\psi}_0\in H^{s_1+2}(\R^2)$ so that there holds
\beq\label{f2qw} \|\tilde\psi_0\|_{H^{s_1+2}} \leq
C(K,\|\na\psi_0\|_{H^{s_1+2}})\|\p_{x_2}\psi_0\|_{H^{s_1+2}}.\eeq}
\end{lem}

The proof of Lemma \ref{lemf1} will be postponed in the Appendix
\ref{appenda}. We now turn to the proof of Theorem \ref{th2}.

\begin{proof}[Proof of Theorem \ref{th2}]
Under the assumption of Theorem \ref{th2}, we deduce from Lemma
\ref{lemf1} that there exists a $\tilde\psi_0$ so that there holds
\eqref{f2qw} and \eqref{a2}. Notice that for $s_2\in (-1,-\f12)$,
then it is easy to observe that \beno
\|\na\tilde\psi_0\|_{\dH^{s_1+1}\cap\dH^{s_2+1}}\lesssim\|\tilde\psi_0\|_{H^{s_1+2}}\leq
C(\|\na\psi_0\|_{H^{s_1+2}}) \|\p_{x_2}\psi_0\|_{H^{s_1+2}} \eeno
Therefore, under the assumption of \eqref{1.5a},  we infer from
Theorem \ref{th1} that the coupled system \eqref{1.1} and \eqref{a3}
has a unique global solution $(\phi, \tilde\phi, \vv u,
p)=(x_2+\psi, -x_1+\tilde\psi, \vv u, p)$ so that there holds
\eqref{th1wq} and \eqref{th1wr}. Then according to the discussions
at the beginning of Section \ref{sect2}, $(\psi, \vv u, p)$ thus
obtained solves \eqref{1.2}, which is in fact the unique solution of
\eqref{1.2} with initial data $(\psi_0, \vv u_0),$ and there holds
\eqref{th1wra}.

On the other hand, thanks to Proposition \ref{propq0}, given initial
data $(\psi_0,\vv u_0)$ with $\na \psi_0\in H^s(\R^2),\, \vv u_0\in
H^{s}(\R^2)$, \eqref{1.2} has a unique solution $(\psi, \vv u, p)$
with $\na\psi\in C([0,T]; H^s(\R^2)),\, \vv u\in  C([0,T];
H^{s}(\R^2)),\, \na\vv u\in L^2((0,T);H^{s}(\R^2)),\,\na p\in
C([0,T]; H^{s-1}(\R^2))$ for any given $T<T^*$. Moreover, if
$T^*<\infty$, there holds \eqref{qs2}. Due to the uniqueness, this
solution must coincide with the one obtained in the last paragraph.
By virtue of  \eqref{th1wra}, \eqref{qs2} can not be true for any
finite $T^*$. Therefore $T^*=\infty$ and there holds \eqref{th1wqa}.
This completes the proof of Theorem \ref{th2}.
\end{proof}

\smallskip

\appendix
\setcounter{equation}{0}
\section{The Sobolev estimates to a function composed with a measure preservation mapping }\label{appendc}

\begin{lem}\label{funct}
{\sl Let $\Phi(y)=y+\Psi(y)$ be a diffeomorphism from $\R^2$ to
$\R^2$ with $\det\,({\na_y\Phi})=\det\,(I+\na_y\Psi)=1$, and
$\Phi^{-1}(x)$ be its inverse mapping. Then for any smooth function
$u, v,$ there hold

\no (i) if $s=0$,
 \beno
 \|u\circ\Phi\|_{\dot{H}^0}=\|u\|_{\dot{H}^0}\quad\text{and}\quad \|v\circ\Phi^{-1}\|_{\dot{H}^0}=\|v\|_{\dot{H}^0};
 \eeno
 (ii) if $-1<s<0$,
 \beno
&&\|u\circ\Phi\|_{\dot{H}^s}\lesssim
(1+\|\na_y\Psi\|_{L^\infty})^{s+3}\|u\|_{\dot{H}^s}
+(1+\|\na_y\Psi\|_{L^\infty})\|\na_y\Psi\|_{\dot{H}^{s+1}}\|u\|_{L^2}\quad\text{and}\\
&&\|v\circ\Phi^{-1}\|_{\dot{H}^s}\lesssim
(1+\|\na_y\Psi\|_{L^\infty})^{s+3}\|v\|_{\dot{H}^s}
+(1+\|\na_y\Psi\|_{L^\infty})\|\na_y\Psi\|_{\dot{H}^{s+1}}\|v\|_{L^2};
\eeno (iii) if $0<s<1$,
 \beno &&\|u\circ\Phi\|_{\dot{H}^s}\lesssim
(1+\|\na_y\Psi\|_{L^\infty})^{s+1}\|u\|_{\dot{H}^s}\quad\text{and}
\\
&& \|v\circ\Phi^{-1}\|_{\dot{H}^s}\lesssim
(1+\|\na_y\Psi\|_{L^\infty})^{s+1}\|v\|_{\dot{H}^s}; \eeno (iv) if
$s=1$, \beno &&\|u\circ\Phi\|_{\dot{H}^1}\lesssim
(1+\|\na_y\Psi\|_{L^\infty})\|u\|_{\dot{H}^1}\quad\text{and}
\\
&& \|v\circ\Phi^{-1}\|_{\dot{H}^1}\lesssim
(1+\|\na_y\Psi\|_{L^\infty})\|v\|_{\dot{H}^1}; \eeno (v) if
$1<s\leq2$,  \beno &&\|u\circ\Phi\|_{\dot{H}^s}\lesssim
(1+\|\na_y\Psi\|_{L^\infty})^{s+1}\|u\|_{\dot{H}^s}+\|\na_y\Psi\|_{\dot{H}^s}\|\na_xu\|_{L^2}\quad\text{and}
\\
&& \|v\circ\Phi^{-1}\|_{\dot{H}^s}\lesssim
(1+\|\na_y\Psi\|_{L^\infty})^{s+1}\|v\|_{\dot{H}^s}+\|\na_y\Psi\|_{\dot{H}^s}\|\na_yv\|_{L^2};
\eeno (vi) if $s>2$, \beno &&\|u\circ\Phi\|_{\dot{H}^s}\lesssim
(1+\|\na_y\Psi\|_{L^\infty})^{s+1}(1+\|\D_y\Psi\|_{H^{s-2}})\|\na_xu\|_{H^{s-1}}\quad\text{and}
\\
&& \|v\circ\Phi^{-1}\|_{\dot{H}^s}\lesssim
(1+\|\na_y\Psi\|_{L^\infty})^{s+1}(1+\|\D_y\Psi\|_{H^{s-2}})\|\na_yv\|_{H^{s-1}},
\eeno }
\end{lem}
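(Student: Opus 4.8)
The plan is to establish the six ranges of $s$ by a bootstrap in the regularity, built on two structural consequences of the constraint $\det\nabla_y\Phi=\det(I+\nabla_y\Psi)\equiv 1$. First, the change of variables $x=\Phi(y)$ (and its inverse $y=\Phi^{-1}(x)$) has unit Jacobian, so $\|w\circ\Phi\|_{L^2}=\|w\|_{L^2}$ and $\|w\circ\Phi^{-1}\|_{L^2}=\|w\|_{L^2}$; this is exactly (i). Second, in two dimensions $\det\nabla\Phi=1$ forces $(\nabla\Phi)^{-1}=\mathrm{adj}(\nabla\Phi)$, whence $\|\nabla\Phi^{-1}\|_{L^\infty}=\|(\nabla\Phi)^{-1}\circ\Phi^{-1}\|_{L^\infty}\lesssim 1+\|\nabla_y\Psi\|_{L^\infty}$, so both $\Phi$ and $\Phi^{-1}$ are bi-Lipschitz with constants controlled by $1+\|\nabla_y\Psi\|_{L^\infty}$, i.e. $|\Phi^{-1}(x)-\Phi^{-1}(y)|\gtrsim(1+\|\nabla_y\Psi\|_{L^\infty})^{-1}|x-y|$ and symmetrically.

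For $0<s<1$ (case (iii)) I would insert these bi-Lipschitz bounds into the Gagliardo--Slobodeckij characterization $\|f\|_{\dot H^s}^2\sim\iint|f(x)-f(y)|^2|x-y|^{-2-2s}\,dx\,dy$: after the unit-Jacobian substitutions $x\mapsto\Phi(x)$, $y\mapsto\Phi(y)$ the denominator becomes $|\Phi^{-1}(x)-\Phi^{-1}(y)|^{2+2s}\gtrsim(1+\|\nabla_y\Psi\|_{L^\infty})^{-2-2s}|x-y|^{2+2s}$, which after taking square roots yields the power $(1+\|\nabla_y\Psi\|_{L^\infty})^{s+1}$; the same argument applies to $\Phi^{-1}$. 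Case (iv) is the chain rule $\nabla_y(u\circ\Phi)=((\nabla_xu)\circ\Phi)\,\nabla_y\Phi$ combined with (i) and $\|\nabla_y\Phi\|_{L^\infty}\le 1+\|\nabla_y\Psi\|_{L^\infty}$. For $1<s\le 2$ (case (v)) I would write $\|u\circ\Phi\|_{\dot H^s}\sim\|(\nabla_xu)\circ\Phi+((\nabla_xu)\circ\Phi)\,\nabla_y\Psi\|_{\dot H^{s-1}}$ with $0<s-1\le 1$: the first summand is bounded by the already-proved cases (iii)--(iv) applied to $\nabla_xu\in\dot H^{s-1}$, and the second by Lemma \ref{L5}(iii), $\|ab\|_{\dot H^{s-1}}\lesssim\|a\|_{L^\infty}\|b\|_{\dot H^{s-1}}+\|a\|_{\dot H^{s}}\|b\|_{L^2}$, with $a=\nabla_y\Psi$, $b=(\nabla_xu)\circ\Phi$, using $\|(\nabla_xu)\circ\Phi\|_{L^2}=\|\nabla_xu\|_{L^2}$; this reproduces the right-hand side of (v), the $\Phi^{-1}$ estimate being identical with $\nabla_y\Psi$ replaced by $\nabla_x\Phi^{-1}-I=\mathrm{adj}(I+\nabla_y\Psi)\circ\Phi^{-1}-I$. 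Case (vi), $s>2$, follows by iterating this reduction in inhomogeneous spaces: each differentiation lowers the exponent by one and peels off a factor $1+\|\nabla_y\Psi\|_{L^\infty}$, while the composition $\nabla^k\big((\nabla_xu)\circ\Phi\big)$ is unfolded by a Littlewood--Paley (``Fa\`a di Bruno'') decomposition whose unique top-order term --- where all derivatives land on $\Phi$ --- produces the factor $1+\|\Delta_y\Psi\|_{H^{s-2}}$, the remaining terms being of strictly lower order and absorbed by the product laws of Lemmas \ref{L5} and \ref{L6}.

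For the negative range $-1<s<0$ (case (ii)) I would argue by duality: with $-s\in(0,1)$,
\[
\|u\circ\Phi\|_{\dot H^s}=\sup_{\|g\|_{\dot H^{-s}}\le 1}\Big|\int(u\circ\Phi)\,g\,dy\Big|=\sup_{\|g\|_{\dot H^{-s}}\le 1}\Big|\int u\,(g\circ\Phi^{-1})\,dx\Big|\le\|u\|_{\dot H^s}\sup_{\|g\|_{\dot H^{-s}}\le 1}\|g\circ\Phi^{-1}\|_{\dot H^{-s}},
\]
the middle equality being the unit-Jacobian change of variables; then $\|g\circ\Phi^{-1}\|_{\dot H^{-s}}$ is controlled by case (iii) applied to $\Phi^{-1}$, whose perturbation $\Phi^{-1}-\mathrm{id}$ has Lipschitz constant $\lesssim 1+\|\nabla_y\Psi\|_{L^\infty}$ by the second structural fact. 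The $\|\nabla_y\Psi\|_{\dot H^{s+1}}\|u\|_{L^2}$ correction is recovered by isolating, via a paraproduct split, the part of $g\circ\Phi^{-1}-g$ carrying the derivatives on $\Phi^{-1}$ and estimating it through Lemma \ref{L5}; the estimate for $v\circ\Phi^{-1}$ is symmetric.

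The step I expect to be the main obstacle is case (vi): the non-integer chain rule for $(\nabla_xu)\circ\Phi$ must be realized through a careful Littlewood--Paley bookkeeping, both to single out the one top-order term responsible for the $1+\|\Delta_y\Psi\|_{H^{s-2}}$ factor and to verify that the induction on $s$ closes with exactly the stated powers of $1+\|\nabla_y\Psi\|_{L^\infty}$; the negative-index case (ii) is the second delicate point, since making the duality pairing and the lower-order correction rigorous requires some care with the precise homogeneous Sobolev realizations.
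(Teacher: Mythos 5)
Your proposal is correct, and for parts (i), (iii), (iv), (v), (vi) it follows essentially the same route as the paper: unit Jacobian for $L^2$, the Gagliardo--Slobodeckij characterization with the bi-Lipschitz bounds for $0<s<1$, the chain rule $\nabla_y(u\circ\Phi)=((\nabla_xu)\circ\Phi)(I+\nabla_y\Psi)$ together with the product law of Lemma~\ref{L5}~(iii) for $1<s\le 2$, and an iteration of that reduction for $s>2$. For (vi) the paper does exactly the bookkeeping you defer: it applies the one-step reduction \eqref{app17} repeatedly to reach \eqref{app9a}, then one application of Lemma~\ref{L5}~(i) plus the embedding $\|\nabla_xu\|_{L^\infty}\lesssim\|\nabla_xu\|_{H^{s-1}}$ for $s>2$; no Fa\`a di Bruno machinery is needed, since the lower-order terms $\|\nabla_y\Psi\|_{\dot H^{s+1-j}}\|\nabla_x^{j}u\|_{L^2}$ are all absorbed into $\|\Delta_y\Psi\|_{H^{s-2}}\|\nabla_xu\|_{H^{s-1}}$ by interpolation. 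The genuine divergence is case (ii): you argue by duality, pairing $u\circ\Phi$ against $g\in\dot H^{-s}$ and transferring the composition to $g$ via the unit-Jacobian change of variables, whereas the paper writes $u=-\nabla_x\cdot\nabla_x(-\Delta_x)^{-1}u$ and uses the cofactor identity $(\p_{x_i}u)\circ\Phi=\sum_jb_{ji}\p_{y_j}(u\circ\Phi)$ together with $\sum_i\p_ib_{ij}=0$ to reduce to the positive-index cases, which is how the correction term $(1+\|\nabla_y\Psi\|_{L^\infty})\|\nabla_y\Psi\|_{\dot H^{s+1}}\|u\|_{L^2}$ arises. Your duality argument is legitimate here (for $d=2$ and $-s\in(0,1)$ the spaces $\dot H^{\pm s}$ are function spaces and the $L^2$ pairing realizes the duality), and it in fact yields the cleaner bound $\|u\circ\Phi\|_{\dot H^s}\lesssim(1+\|\nabla_y\Psi\|_{L^\infty})^{1-s}\|u\|_{\dot H^s}$ with no correction term at all; your subsequent paraproduct step to ``recover'' the correction is therefore superfluous (an extra nonnegative term on the right-hand side costs nothing), and you could simply drop it.
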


\begin{proof} We denote
\beno \cA=(a_{ij})_{i,j=1,2}\eqdefa I+\na_y\Psi,\quad
\cB=(b_{ij})_{i,j=1,2}\eqdefa (I+\na_y\Psi)^{-1}. \eeno Due to
$\det\cA=1,$ the matrix $\cB$ equals to the adjoint matrix of $\cA$.
This leads to \beq\label{app5}
(\p_{x_i}u)\circ\Phi=\sum_{j=1}^2b_{ji}\p_{y_j}(u\circ\Phi)\quad\text{and}
\quad(\p_{y_i}v)\circ\Phi^{-1}=\sum_{j=1}^2a_{ji}\circ\Phi^{-1}\p_{x_j}(v\circ\Phi^{-1}).
\eeq

In what follows, we shall only present  the  proof of the related
estimates involving $u\circ\Phi$, and the ones involving
$v\circ\Phi^{-1}$ is identical. Firstly it follows from
$\det\,(\na_y\Phi)=1$ that  \beno
\|u\circ\Phi\|_{\dot{H}^0}=\|u\|_{\dot{H}^0}. \eeno When $s\in
(0,1),$ we obtain from \beno
\|f\|_{\dot{H}^s}^2\sim\int_{\R^2\times\R^2}\f{|f(x)-f(y)|^2}{|x-y|^{2+2s}}\,dx\,dy,
\eeno  that \beq\label{app6}
\|u\circ\Phi\|_{\dot{H}^s}\lesssim(1+\|\na_y\Psi\|_{L^\infty})^{s+1}\|u\|_{\dot{H}^s}.
\eeq For the case  $s\in (-1,0),$ we get, by using \eqref{app5},
that\beno
u\circ\Phi=-(\na_x\cdot\na_x(-\D_x)^{-1}u)\circ\Phi=-\cB^T\na_y\cdot((\na_x(-\D_x)^{-1}u)\circ\Phi),
\eeno which combining  with (iii) of Lemma \ref{L5} leads to
\beno\begin{aligned}
\|u\circ\Phi\|_{\dot{H}^s}&\lesssim\|\na_y\cdot((\na_x(-\D_x)^{-1}u)\circ\Phi)\|_{\dot{H}^s}
+\|(\cB^T-I)\na_y\cdot((\na_x(-\D_x)^{-1}
u)\circ\Phi)\|_{\dot{H}^s}\\
&\lesssim(1+\|\na_y\Psi\|_{L^\infty})\|(\na_x(-\D_x)^{-1}u)\circ\Phi\|_{\dot{H}^{s+1}}\\
&\quad+\|\na_y\Psi\|_{\dot{H}^{s+1}}\|\na_y(\na_x(-\D_x)^{-1}u)\circ\Phi)\|_{L^2}.
\end{aligned}\eeno
Applying \eqref{app5} and \eqref{app6}, one thus obtains  for $s\in
(-1,0)$ that \beq\label{app10}
\|u\circ\Phi\|_{\dot{H}^s}\lesssim(1+\|\na_y\Psi\|_{L^\infty})^{s+3}\|u\|_{\dot{H}^s}+
(1+\|\na_y\Psi\|_{L^\infty})\|\na_y\Psi\|_{\dot{H}^{s+1}}\|u\|_{L^2}.
\eeq Whereas we deduce from \eqref{app5} that \beq\label{app7}
\|u\circ\Phi\|_{\dot{H}^1}\lesssim\|(\na_xu)\circ\Phi(I+\na_y\Psi)\|_{L^2}
\lesssim(1+\|\na_y\Psi\|_{L^\infty})\|u\|_{\dot{H}^1}. \eeq To
handle the case that  $1<s\leq2$, we first use \eqref{app5} and then
Lemma \ref{L5} (iii) to  deduce \beq\label{app17}\begin{aligned}
\|u\circ\Phi\|_{\dot{H}^s}&\lesssim\|(\na_xu)\circ\Phi(I+\na_y\Psi)\|_{\dot{H}^{s-1}}\\
&\lesssim(1+\|\na_y\Psi\|_{L^\infty})\|(\na_xu)\circ\Phi\|_{\dot{H}^{s-1}}
+\|\na_y\Psi\|_{\dot{H}^s}\|(\na_xu)\circ\Phi\|_{L^2},
\end{aligned}\eeq
which along with \eqref{app6} and \eqref{app7} ensures
\beq\label{app8}
\|u\circ\Phi\|_{\dot{H}^s}\lesssim(1+\|\na_y\Psi\|_{L^\infty})^{s+1}\|u\|_{\dot{H}^s}
+\|\na_y\Psi\|_{\dot{H}^s}\|\na_xu\|_{L^2}. \eeq

For $k<s-1\leq k+1$ $(k\in\N)$, applying \eqref{app17} repeatedly,
we obtain \beq\label{app9a}\begin{aligned}
\|u\circ\Phi\|_{\dot{H}^{s-1}}
&\lesssim(1+\|\na_y\Psi\|_{L^\infty})^{k}\|(\na_x^{k}u)\circ\Phi\|_{\dot{H}^{s-(k+1)}}\\
&\quad+\sum_{j=2}^{k+1}
(1+\|\na_y\Psi\|_{L^\infty})^{j-2}\|\na_y\Psi\|_{\dot{H}^{s+1-j}}\|\na_x^{j-1}u\|_{L^2}.
\end{aligned}\eeq

On the other hand, thanks to \eqref{app5} and (i) of Lemma \ref{L5},
one has
\beno\|u\circ\Phi\|_{\dot{H}^s}\lesssim(1+\|\na_y\Psi\|_{L^\infty})\|(\na_xu)\circ\Phi\|_{\dot{H}^{s-1}}
+\|\na_y\Psi\|_{\dot{H}^{s-1}}\|\na_xu\|_{L^\infty}, \eeno this
combining with \eqref{app9a} yields for $k+1<s\leq k+2$ $(k\in\N)$
that \beq\label{app11}\begin{aligned} \|u\circ\Phi\|_{\dot{H}^{s}}
\lesssim&(1+\|\na_y\Psi\|_{L^\infty})^{k+1}\|(\na_x^{k+1}u)\circ\Phi\|_{\dot{H}^{s-(k+1)}}\\
&+\sum_{j=2}^{k+1}
(1+\|\na_y\Psi\|_{L^\infty})^{j-1}\|\na_y\Psi\|_{\dot{H}^{s+1-j}}\|\na_x^ju\|_{L^2}+\|\na_y\Psi\|_{\dot{H}^{s-1}}\|\na_xu\|_{L^\infty}\\
\lesssim&(1+\|\na_y\Psi\|_{L^\infty})^{k+1}\bigl(\|(\na_x^{k+1}u)\circ\Phi\|_{\dot{H}^{s-(k+1)}}
+\|\D_y\Psi\|_{H^{s-2}}\|\na_xu\|_{H^{s-1}}\bigr),
\end{aligned}\eeq
where we used the embedding inequality \beno
\|\na_xu\|_{L^\infty}\lesssim\|\na_xu\|_{H^{s-1}}\quad\text{for}\quad
s>2. \eeno

By \eqref{app6} and \eqref{app11}, we finally obtain
\beq\label{app16} \|u\circ\Phi\|_{\dot{H}^s}\lesssim
(1+\|\na_y\Psi\|_{L^\infty})^{s+1}(1+\|\D_y\Psi\|_{H^{s-2}})\|\na_xu\|_{H^{s-1}}.
\eeq This completes the proof of Lemma \ref{funct}.
\end{proof} \smallskip

\setcounter{equation}{0}
\section{The Proof of Lemmas \ref{L3} and \ref{L4}}\label{appendb}

\begin{proof}[Proof of Lemma \ref{L3}]
We first get by applying Bony's decomposition \eqref{bb5} and
\eqref{bb6} that \beq\label{bb20}\begin{aligned}
\p_2Y^1\p_1Y_t=&\bigl(TT^h+T\cR^h+\bar{T}T^h+\bar{T}\bar{T}^h
+\bar{T}R^h+RT^h+R\cR^h\bigr)(\p_2Y^1,\p_1Y_t).
\end{aligned}\eeq
Since \beno
\begin{split}
&\|\D_{j'}S_{k'+2}^h\p_1Y_t(t)\|_{L^2}\lesssim
d_{j',k'}(t)2^{-\f{3j'}2}2^{\f{k'}2}\|Y_t(t)\|_{\cB^{\f32,\f12}}\quad\mbox{and since}\\
&\|S_{j'-1}\D_{k'}^h\p_2Y^1(t)\|_{L^2_h(L^\infty_v)}\lesssim
d_{k'}(t)2^{-\f{k'}2}\|\p_2Y^1(t)\|_{\cB^{\f12,\f12}},
\end{split}
\eeno by applying Lemma \ref{L1} and Lemma \ref{le2.1}, we have
\beno\begin{aligned}
\|\D_j\D_k^h&(T\cR^h(\p_2Y^1,\p_1Y_t))(t)\|_{L^2}\\ &\lesssim
2^{\f{k}{2}}\sum_{{|j'-j|\leq4} \atop {j'+N_0\geq k'\geq
k-N_0}}\|S_{j'-1}\D_{k'}^h\p_2Y^1(t)\|_{L^2_h(L^\infty_v)}
\|\D_{j'}S_{k'+2}^h\p_1Y_t(t)\|_{L^2}\\
&\lesssim 2^{\f{k}{2}}\sum_{{|j'-j|\leq4} \atop { k'\geq k-N_0}}
d_{j',k'}(t)2^{-j'}2^{-\f{k'}{2}}\|\p_2Y^1(t)\|_{\cB^{\f{1}{2},\f{1}{2}}}
\|Y_t(t)\|_{\cB^{\f{3}{2},\f{1}{2}}}\\
&\lesssim
d_{j,k}(t)2^{-j}\|\p_2Y^1(t)\|_{\dot{B}^{1}_{2,1}}\|Y_t(t)\|_{\cB^{\f{3}{2},\f{1}{2}}},
\end{aligned}\eeno
The same estimate holds for
$\|\D_j\D_k^h(TT^h(\p_2Y^1,\p_1Y_t))(t)\|_{L^2}.$

Following the same line of the arguments, one has
\beno\begin{aligned}
\|\D_j\D_k^h&(R\cR^h(\p_2Y^1,\p_1Y_t))(t)\|_{L^2}\\
&\lesssim2^{\f{j}{2}}2^{\f{k}{2}}\sum_{{j'\geq j-N_0} \atop
{j'+N_0\geq k'\geq k-N_0}}
\|\D_{j'}\D_{k'}^h\p_2Y^1(t)\|_{L^2}\|\wt{\D}_{j'}S_{k'+2}^h\p_1Y_t(t)\|_{L^2}\\
&\lesssim2^{\f{j}{2}}2^{\f{k}{2}}\sum_{{j'\geq j-N_0} \atop { k'\geq
k-N_0}}d_{j',k'}(t)2^{-\f{3j'}{2}}2^{-\f{k'}{2}}\|\p_2Y^1(t)\|_{\dot{B}^1_{2,1}}
\|Y_t(t)\|_{\cB^{\f{3}{2},\f{1}{2}}}\\
&\lesssim
d_{j,k}(t)2^{-j}\|\p_2Y^1(t)\|_{\dot{B}^1_{2,1}}\|Y_t(t)\|_{\cB^{\f{3}{2},\f{1}{2}}}.
\end{aligned}\eeno
Similar estimate holds for
$\|\D_j\D_k^h(\bar{T}T^h(\p_2Y^1,\p_1Y_t))(t)\|_{L^2},$
$\|\D_j\D_k^h(\bar{T}R^h(\p_2Y^1,\p_1Y_t))(t)\|_{L^2},$ and
$\|\D_j\D_k^h(RT^h(\p_2Y^1,\p_1Y_t))(t)\|_{L^2}.$

Finally as
$$\|S_{j'-1}S_{k'-1}^h\p_1Y_t(t)\|_{L^\infty}\lesssim
d_{k'}(t)2^{k'}\|Y_t(t)\|_{\cB^{\f{1}{2},\f{1}{2}}},$$
 we get by applying Lemma \ref{le2.1}  that
\beno\begin{aligned}
\|\D_j\D_k^h(\bar{T}\bar{T}^h(\p_2Y^1,\p_1Y_t))(t)\|_{L^2}
&\lesssim\sum_{{|j'-j|\leq4} \atop
{|k'-k|\leq4}}\|\D_{j'}\D_{k'}^h\p_2Y^1(t)\|_{L^2}
\|S_{j'-1}S_{k'-1}^h\p_1Y_t(t)\|_{L^\infty}\\
 &\lesssim \sum_{{|j'-j|\leq4}\atop{
|k'-k|\leq4}}d_{j',k'}(t)2^{-j'}\|\p_1Y^1(t)\|_{\dot{B}^2_{2,1}}
\|Y_t(t)\|_{\cB^{\f{1}{2},\f{1}{2}}}\\
&\lesssim
d_{j,k}(t)2^{-j}\|\p_1Y^1(t)\|_{\dot{B}^2_{2,1}}\|Y_t(t)\|_{\cB^{\f{1}{2},\f{1}{2}}}.
\end{aligned}\eeno

Substituting the above estimates into \eqref{bb20} and integrating
the resulting inequality over $(0,T),$ we complete the proof of
Lemma \ref{L3}.
\end{proof}

\begin{proof}[Proof of Lemma \ref{L4}]
Thanks to Definition \ref{def2} and Lemmas \ref{L1}, \ref{L2}, we obtain that
\beno\begin{aligned}
&\|\p_1(\p_2Y^1\p_2Y_t)\|_{L^1_T(\cB^{0,0})}=\|\p_1\p_2Y^1\p_2Y_t+\p_2Y^1\p_1\p_2Y_t\|_{L^1_T(\cB^{0,0})}\\
&\lesssim\|\p_1\p_2Y^1\|_{L^\infty_T(\cB^{0,0})}\|\p_2Y_t\|_{L^1_T(\cB^{\f12,\f12})}+\|\p_2Y^1\|_{L^\infty_T(\cB^{\f12,\f12})}
\|\p_1\p_2Y_t\|_{L^1_T(\cB^{0,0})}\\
&\lesssim\|\p_2Y^1\|_{L^\infty_T(\dot{B}^1_{2,1})}\|Y_t\|_{L^1_T(\cB^{\f32,\f12})},
\end{aligned}
\eeno
which finishes the proof of Lemma \ref{L4}.

\end{proof}

\smallskip

\setcounter{equation}{0}
\section{The Proof of Lemma \ref{lemf1}}\label{appenda}

\begin{lem}\label{C.1}
{\sl Let  $s>2$ and $f\in H^s(\R^2)$ with $\Supp f(\cdot,x_2)\subset
[-K,K]$ for some positive constant $K.$ Let $\vv b=(b^1,b^2)^T$ be a
divergence free vector field with $\na\vv b\in H^{s-1}(\R^2)$ and
$b^1\geq\f12.$ We assume moreover that $f$ and $\vv b$ are
admissible on $\{0\}\times\R$ in the sense of Definition
\ref{def1.1ad}.
  Then \eqref{d1} has a solution
$\psi\in H^s(\R^2)$ so that there holds \beq\label{d2}
\|\psi\|_{H^s}\leq C\bigl(K,\|\na\vv b\|_{H^{s-1}}\bigr)\|f\|_{H^s}.
\eeq}
\end{lem}
\begin{proof}  Since $\na\vv b\in H^{s-1}(\R^2)\subset L^\infty(\R^2)$, \eqref{d3}  has a unique global solution on $\R$ so that
 for all $t\in\R,$ \beq \label{d4} \|\na
X(t,\cdot)\|_{L^\infty}\leq \exp\Bigl(\|\na\vv
b\|_{L^\infty}|t|\Bigr)\quad\andf\quad \det\Bigl(\f{\p X(t,x)}{\p
x}\Bigr)=1. \eeq While it follows from \eqref{d3} and \eqref{d1}
that \beno \frac{d}{dt}\psi(X(t,x))=f(X(t,x)), \eeno from which, we
define \beno \psi(x)= \left\{\begin{array}{l}
\displaystyle -\int_0^\infty f(X(t,x))\,dt\quad\mbox{if}\quad x_1\geq 0, \\
\displaystyle \ \ \int_{-\infty}^0 f(X(t,x))\,dt\quad\mbox{if}\quad
x_1\leq 0.
\end{array}\right. \eeno
Thanks to the assumption that $f$ and $\vv b$ are admissible on
$\{0\}\times\R$ in the sense of Definition \ref{def1.1ad}, the
values of $\psi(x)$ at $(0,x_2)$ are compatible. We remark  that the
equation \eqref{d1}, $ b^1\p_1\psi=-b^2\p_2\psi+f,$ and
$b^1\geq\f12$ implies that the derivatives of $\psi$ in the $x_2$
variable yields the derivatives of $\psi$ with respect to $x_1$
variable. Therefore, we do not require any admissible condition for
the derivatives of $f$ and $\vv b.$

On the other hand, as $b^1\geq \f12,$ we deduce from \eqref{d3} that
\beno && X^1(t,x)\geq
x_1+\f{t}2\geq K \quad\ \ \mbox{if}\quad t\geq 2K,\,\ \  x_1\geq 0,\\
 && X^1(t,x)\leq
x_1+\f{t}2\leq -K \quad \mbox{if}\quad t\leq -2K,\, x_1\leq 0,\eeno
which together with the assumption: $\Supp f(\cdot,x_2)\subset
[-K,K]$ for some positive constant $K,$ implies that \beq \label{d5}
\psi(x)= \left\{\begin{array}{l}
\displaystyle -\int_0^{2K} f(X(t,x))\,dt\quad\mbox{if}\quad x_1\geq 0, \\
\displaystyle \ \ \int_{-2K}^0 f(X(t,x))\,dt\quad\mbox{if}\quad
x_1\leq 0.
\end{array}\right. \eeq

It remains to prove \eqref{d2}. Indeed for any $s>2,$ we deduce from
\eqref{d3} and product laws in Sobolev spaces that \beno
\begin{aligned} \|\na_xX(t,\cdot)-I\|_{H^{s-1}}\lesssim &
\int_0^t\Bigl(\|\na\vv b\|_{L^\infty}\|\na_xX(t',\cdot)-I\|_{H^{s-1}}\\
&+\|(\na\vv
b)(X(t',\cdot))\|_{H^{s-1}}\bigl(1+\|\na_xX(t',\cdot)-I\|_{L^\infty}\bigr)\Bigr)\,dt',
\end{aligned}
\eeno from which, \eqref{d4} and Lemma \ref{funct}, we infer, for $
|t|\leq 2K,$ that \beno
\begin{aligned} \|\na_xX(t,\cdot)-I\|_{H^{s-1}}
\leq & C\bigl(K,\|\na\vv b\|_{H^{s-1}}\bigr)\Bigl(\|\na\vv
b\|_{H^{s-1}}+\int_0^t\|\na_xX(t',\cdot)-I\|_{H^{s-1}}\,dt'\Bigr).
\end{aligned}
\eeno Applying Gronwall's Lemma gives rise to
 \beq\label{d7} \max_{|t|\in
[0,2K]}\|\na_xX(t,\cdot)-I\|_{H^{s-1}}\leq C\bigl(K,\|\na\vv
b\|_{H^{s-1}}\bigr)\|\na\vv b\|_{H^{s-1}}. \eeq

By virtue of Lemma \ref{funct} and \eqref{d4}, we thus deduce from
\eqref{d5} that \beno
\|\psi\|_{H^s}\lesssim\int_{-2K}^{2K}C(\|\na_xX(t',\cdot)-I\|_{L^\infty})\bigl(1+\|\na_xX(t',\cdot)-I\|_{H^{s-1}}\bigr)\|f\|_{H^s}\,dt',
\eeno which along with \eqref{d4} and \eqref{d7} implies \eqref{d2}.
And Lemma \ref{C.1} is proved.
\end{proof}

We now turn to the proof of Lemma \ref{lemf1}.

\begin{proof}[{Proof of Lemma \ref{lemf1}}] We first deduce from
\eqref{a2} that $\tilde{\psi}_0$ satisfies \beq\label{f2}
(1+\p_{x_2}\psi_0)\p_{x_1}\tilde{\psi}_0-\p_{x_1}\psi_0\p_{x_2}\tilde{\psi}_0=\p_{x_2}\psi_0,\quad\mbox{on}\quad
\R^2.\eeq It is easy to observe that $\dive\bigl(1+\p_{x_2}\psi_0,
-\p_{x_1}\psi_0\bigr)^T=0,$ and $\|\p_{x_2}\psi_0\|_{L^\infty}\leq
Cc_0\leq \f12$ for $c_0$ sufficiently small in \eqref{1.5a}.
Applying Lemma \ref{C.1} then ensures that \eqref{f2} has a solution
$\tilde{\psi}_0$ which satisfies \eqref{f2qw}. This completes the
proof of Lemma \ref{lemf1}.
\end{proof}

\bigskip

\noindent {\bf Acknowledgments.}  We would like to thank Dr. Zhen
Lei for pointing out a mistake in the earlier version of Lemma
\ref{lemf1}. F. Lin is partially supported by
the NSF grants DMS 1065964 and DMS 1159313. Both L. XU and P. Zhang
are supported by innovation grant from National Center for
Mathematics and Interdisciplinary Sciences. L. Xu is partially
supported by NSF of China under Grant 11201455. P. Zhang is
partially supported by NSF of China under Grant 10421101 and
10931007, the one hundred talents' plan from Chinese Academy of
Sciences under Grant GJHZ200829.

\medskip

\end{document}